\setlist{  
  listparindent=\parindent,
  parsep=0pt,
}
\theoremstyle{plain}
\newtheorem{thm}{Theorem}[section]
\newtheorem{prop}[thm]{Proposition}
\newtheorem{lemma}[thm]{Lemma}
\theoremstyle{definition}
\newtheorem{remark}[thm]{Remark}
\Crefname{thm}{Theorem}{Theorems}
\Crefname{prop}{Proposition}{Propositions}
\numberwithin{equation}{section} 
\DeclarePairedDelimiter{\pa}{\lparen}{\rparen}
\DeclareMathOperator{\supp}{supp}
\DeclareMathOperator{\diam}{diam}
\DeclareMathOperator{\dist}{dist}
\DeclareMathOperator{\sgn}{sgn}
\newcommand{\M}{{\mathcal{M}}}
\newcommand{\p}{{\partial}}
\newcommand{\cre}{\color{red}}
\renewcommand{\d}{\mathsf{d}}
\newcommand{\R}{{\mathbb{R}}}
\newcommand{\N}{{\mathbb{N}}}
\renewcommand{\H}{{\mathcal{H}}}
\newcommand{\T}{{\mathbb{T}}}
\newcommand{\g}{{\mathsf{g}}}
\newcommand{\Sc}{{\mathcal{S}}}
\renewcommand{\M}{{\mathbb{M}}}
\newcommand{\I}{\mathbb{I}}
\renewcommand{\k}{\mathsf{k}}
\newcommand{\ga}{\gamma}
\newcommand{\nab}{\nabla}
\newcommand{\wt}{\widetilde}
\newcommand{\tl}{\tilde}
\newcommand{\D}{\Delta}
\newcommand{\ph}{\phantom{=}}
\newcommand{\nn}{\nonumber}
\newcommand{\ol}{\overline}
\newcommand{\XN}{X_N}
\newcommand{\ZN}{Z_N}
\newcommand{\ux}{X}
\newcommand{\uz}{Z}
\newcommand{\ep}{\epsilon}
\newcommand{\vep}{\varepsilon}
\newcommand{\al}{\alpha}
\newcommand{\be}{\beta}
\newcommand{\ka}{\kappa}
\newcommand{\la}{\lambda}
\newcommand{\Om}{\Omega}
\newcommand{\indic}{\mathbf{1}}
\newcommand{\Fr}{\mathsf{F}}
\newcommand{\Hr}{\mathsf{H}}
\newcommand{\Uu}{\mathfrak{U}}
\newcommand{\E}{{\mathbb{E}}}
\newcommand{\Ec}{\mathcal{E}}
\newcommand{\cd}{\mathsf{c}_{\mathsf{d},\mathsf{s}}}
\newcommand{\s}{\mathsf{s}}
\newcommand{\w}{\mathsf{w}}
\renewcommand{\k}{k}
\renewcommand{\P}{\mathcal{P}}
\let\div\relax
\DeclareMathOperator{\div}{\mathrm{div}}
\def\XXint#1#2#3{{\setbox0=\hbox{$#1{#2#3}{\int}$ }
\vcenter{\hbox{$#2#3$ }}\kern-.6\wd0}}
\let\oldtocsection=\tocsection
\let\oldtocsubsection=\tocsubsection
\let\oldtocsubsubsection=\tocsubsubsection
\renewcommand{\tocsection}[2]{\hspace{0em}\oldtocsection{#1}{#2}}
\renewcommand{\tocsubsection}[2]{\hspace{1em}\oldtocsubsection{#1}{#2}}
\renewcommand{\tocsubsubsection}[2]{\hspace{2em}\oldtocsubsubsection{#1}{#2}}
\newcommand{\com}[1]{{\color{cyan}{*** #1 ***}}}
\title[The Lake equation as a supercritical mean-field limit]{The Lake equation as a supercritical mean-field limit}
\author[M. Rosenzweig]{Matthew Rosenzweig}
\address{Matthew Rosenzweig, Carnegie Mellon University, Department of Mathematical Sciences, Pittsburgh, PA} 
\email{mrosenz2@andrew.cmu.edu}
\thanks{M.R. was supported by the Simons Foundation through the Simons Collaboration on Wave Turbulence and by NSF grants DMS-2052651, DMS-2206085, DMS-2345533.}
\author[S. Serfaty]{Sylvia Serfaty}
\address{Sylvia Serfaty, Courant Institute of Mathematical Sciences, New York University, New York City, NY}
\email{serfaty@cims.nyu.edu}
\thanks{S.S. was supported by NSF grant DMS-2247846 and by the Simons Foundation through the Simons Investigator program.}
\begin{document}
\begin{abstract}
We study so-called supercritical mean-field limits of systems of trapped particles moving according to Newton's second law with either Coulomb/super-Coulomb or regular interactions, from which we derive a $\d$-dimensional generalization of the \emph{Lake equation}, which coincides with  the incompressible Euler equation in the simplest setting, for monokinetic data. This supercritical mean-field limit may also be interpreted as a combined mean-field and quasineutral limit, and our assumptions on the rates of these respective limits are shown to be optimal. Our work provides a mathematical basis for the \emph{universality} of the Lake equation in this scaling limit---a new observation---in the sense that the dependence on the interaction and confinement is only through the limiting spatial density of the particles. Our proof is based on a modulated-energy method and takes advantage of regularity theory for the  obstacle problem for the fractional Laplacian.

\end{abstract}
\maketitle

\section{Introduction}\label{sec:intro}
Consider a \emph{Newtonian system} of $N$ particles with a pairwise \emph{interaction potential} $\g$ and external \emph{confining potential} $V$:
\begin{equation}\label{eq:NewODE}
\begin{cases}
\dot{x}_i^t = v_i^t \\
\dot{v}_i^t = -\ga v_i^t \displaystyle-\frac{1}{\vep^2 N} \sum_{1\leq j\leq N: j\neq i}\nabla\g(x_i^t-x_j^t) -\frac{1}{\vep^2}\nabla V(x_i^t)\\
(x_i^0,v_i^0) = (x_i^\circ,v_i^\circ),
\end{cases}
\qquad 1\le i\le N.
\end{equation}
The positions and velocities are assumed to belong to $\R^\d$. Here, $\ga\geq 0$ is the \emph{friction coefficient}, and $\vep>0$ is a small parameter, possibly depending on $N$, which encodes physical information about the system. We are particularly interested in $\g$ belonging to the family of \emph{logarithmic} or \emph{Riesz} interactions
\begin{equation}\label{eq:gmod}
\g(x)= \begin{cases}  \frac{1}{\s} |x|^{-\s}, \quad  & \s \neq 0\\
-\log |x|, \quad & \s=0,
\end{cases}
\end{equation}
with the assumption that  $\d-2\le \s<\d$. Up to a normalizing constant $\cd$, these interactions are characterized as fundamental solutions of the fractional Laplacian: $(-\Delta)^{\frac{\d-\s}{2}}\g = \cd \delta_0$. The particular case $\s=\d-2$  corresponds to the classical \emph{Coulomb} interaction, which is fundamental to plasma physics, and thus $\s \ge \d-2$ means that we are considering the \emph{super-Coulomb} case. See \cref{rem:SCrest} below for further elaboration on the restriction to this case.

{We are interested in the large $N$ and small $\vep$ limit of \eqref{eq:NewODE}, the latter of which we interpret as a \emph{quasineutral limit} elaborated on in the next subsection. There are two (mathematically equivalent) motivations for our setup.

The first motivation is non-neutral plasmas \cite{OD1998} (see also \cite{WBIP1985,MKTL2008} for relevance to trapped neutral systems). The system \eqref{eq:NewODE} models the evolution of a trapped system of ions near thermodynamic equilibrium, meaning the spatial density $\mu_N^t \coloneqq \frac1N\sum_{i=1}^N\delta_{x_i^t}$ is close to the \emph{equilibrium measure} $\mu_V$. This equilibrium measure is defined as
the probability measure that minimizes the {macroscopic} energy
\begin{equation}\label{eq:Edef}
\mathcal{E}(\mu) \coloneqq \int_{\R^\d}Vd\mu + \frac{1}{2}\int_{(\R^\d)^{ 2}}\g(x-y)d\mu(x) d\mu(y).
\end{equation}
We refer to  the recent lecture notes of the second author \cite[Chap. 2]{SerfatyLN} for  details on the equilibrium measure in this context of Coulomb and Riesz interactions, as well as for its connection to the solution of the fractional obstacle problem, which will be also discussed below. 
One can see the equilibrium measure as a generalization of  the uniform measure on a torus when considering an infinitely extended trapped system.

The second motivation comes from two-species globally neutral systems. In this setting, the empirical spatial density $\mu_N^t$ is close to a fixed density $\mu$, representing the density of a stationary background of an oppositely charged species of particles (e.g.~heavy positively charged ions). Alternatively, one may think of this as a one-component plasma with a nonuniform background. In this case, the term $-\nab V(x_i^t)$ on the right-hand side of the second line of \eqref{eq:NewODE} should be replaced by the attractive force $+\nab(\g\ast\mu)(x_i^t)$ due to the background.

The settings for each of these motivations are mathematically equivalent. The latter corresponds to $V = -\g\ast\mu + \frac12\int_{(\R^\d)^2}\g(x-y)d\mu^{\otimes 2}(x,y)$. While the former corresponds to considering a background density such that $\nabla(\g\ast\mu+V)=0$ in the support of $\mu$. This is in particular achieved when (but not only when) $\mu$ is equal to the equilibrium measure $\mu_V$. 
}


Under suitable assumptions on the external potential $V$, our goal is  to show that if the initial \emph{empirical spatial density} $\mu_N^\circ \coloneqq \frac1N\sum_{i=1}^N \delta_{x_i^\circ} \rightharpoonup \mu_V$ as $N\rightarrow\infty$ and the initial velocities $v_i^\circ \approx u^\circ(x_i^\circ)$, for a macroscopic vector field $u^\circ$ on $\R^\d$, then the \emph{empirical measure} $\frac{1}{N}\sum_{i=1}^N \delta_{(x_i^t,v_i^t)}$ associated to a solution of \eqref{eq:NewODE} converges as $\vep\rightarrow 0$ and $N\rightarrow\infty$ to the \emph{monokinetic} measure $\mu_V(x)\delta_{u^t(x)}(v)$, where $u^t$ satisfies the \emph{Lake equation}
\begin{equation}\label{eq:Lake}
\begin{cases}
\p_t u +\ga u+ u \cdot\nabla u = -\nabla p\\
\div(\mu_V u) = 0.
\end{cases}
\end{equation}
In particular, when $\g$ is the Coulomb potential, we give a microscopic counterpart to the proof of the \emph{quasineutral} limit for Vlasov-Poisson with monokinetic data by  Barr\'{e} \emph{et al.} \cite{BCGM2015}.

Note that if $\mu_V$ is constant and $\ga=0$, then \eqref{eq:Lake} is nothing but the \emph{incompressible Euler equation}. The \emph{pressure} $p$ is a Lagrange multiplier to enforce the incompressibility constraint $\div(\mu_V u)=0$. Multiplying both sides of the first equation of \eqref{eq:Lake} by $\mu_V$ and taking the divergence, the pressure $p$ is obtained from the velocity $u$ by solving the divergence-form elliptic equation
\begin{equation}\label{eq:press}
-\div\pa*{\mu_V\nabla p} = \div^{2}\pa*{\mu_V u^{\otimes 2}}= \div\pa*{\mu_V u\cdot\nabla u}.
\end{equation}

Equation \eqref{eq:Lake}, which is also sometimes called the \emph{anelastic equation}, appears in the modeling of atmospheric flows \cite{OP1962, Masmoudi2007} and superconductivity \cite{CR1997, DS2018, Duerinckx2018} and has been mathematically studied in \cite{LOT1996, LOT1996phy, BCGM2015, Duerinckx2018}.  In particular, the second author and Duerinckx \cite{DS2018} have shown that the equation arises as a mean-field limit for Ginzburg-Landau vortices with pinning and forcing. {We also mention that the Lake equation has been shown \cite{Menard2023lake} to be a mean-field limit for a model of vortices in shallow water with varying topography \cite{Richardson2000}.}

Our proof rests on the powerful modulated-energy method introduced by the second author \cite{Serfaty2017} and developed in subsequent works \cite{Duerinckx2016, Serfaty2020,BJW2019edp, NRS2021} to treat first-order Hamiltonian or gradient flows for the family of Riesz energies. It takes advantage of new sharp estimates controlling the first variation along a transport of the modulated energy by the modulated energy itself, recently obtained by the authors \cite{RS2022}. To handle the general nonuniformity of the equilibrium measure and the presence of boundaries, we recast the equilibrium measure as a solution to an obstacle problem for the fractional Laplacian and take advantage of recent regularity theory.


\subsection{The combined mean-field and quasi-neutral limit}\label{ssec:introMFQN}
To see how the equation \eqref{eq:Lake} appears as a formal limiting dynamics for the empirical measure of \eqref{eq:NewODE}, we argue as follows.

Suppose that the parameter $\vep>0$ is fixed. Then a formal calculation (e.g. see \cite{Jabin2014}) reveals that if the initial empirical measure $f_{N,\vep}^\circ \coloneqq \frac1N\sum_{i=1}^N \delta_{z_i^\circ}$ converges to a sufficiently regular probability measure $f_{\vep}^\circ$ as $N\rightarrow\infty$, where $z_i^\circ\coloneqq (x_i^\circ,v_i^\circ)$, then the time-evolved empirical measure $f_{N,\vep}^t \coloneqq \frac{1}{N}\sum_{i=1}^N \delta_{z_i^t}$ converges as $N\rightarrow\infty$ to a solution $f_\vep^t$ of the \emph{Vlasov equation with friction} 
\begin{equation}\label{eq:Vlas}
\begin{cases}
\p_t f_\vep+v\cdot\nabla_x f_\vep-\frac{1}{\vep^2}\nabla(V+\g\ast\mu_\vep)\cdot\nabla_v f_\vep - \div_v(\ga vf)=0\\
\mu_\vep = \int_{\R^\d}df_\vep(\cdot,v) \\
f_\vep|_{t=0} = f_\vep^\circ,
\end{cases}
\qquad (t,x,v) \in \R \times (\R^\d)^2.
\end{equation}
We now seek to {formally}  derive the Lake equation \eqref{eq:Lake} from the Vlasov equation \eqref{eq:Vlas} in the limit as $\vep\rightarrow0$.

We recall that the regime under consideration is when the spatial density $\mu_\vep^t$ converges to the equilibrium measure $\mu_V$ as $\vep\rightarrow 0$ (this is an assumption). Decomposing the potential 
\begin{equation}
V+\g\ast\mu_\vep = (V+\g\ast\mu_V) + \g\ast(\mu_\vep-\mu_V),
\end{equation}
the fact that, {by characterization of the equilibrium measure}, $V+\g\ast\mu_V$ is constant on the support of $\mu_V$ (see \cref{sec:EMOP} for details) implies that
\begin{equation}
\nabla\pa*{V+\g\ast\mu_\vep} = \nabla\g\ast(\mu_\vep-\mu_V), \qquad x\in\supp \mu_V.
\end{equation}
Assuming that the renormalized electric potential difference $\frac{1}{\vep^2}\g\ast(\mu_\vep-\mu_V)$ has a weak limit $p$ as $\vep\rightarrow 0$, we see that the weak limit $f\coloneqq \lim_{\vep\rightarrow 0}f_\vep$ satisfies the equation
\begin{equation}\label{eq:KIE}
\begin{cases}
\p_tf+v\cdot\nabla_xf - \nabla p\cdot\nabla_vf -\div_v (\ga v f)=0\\
\mu_V = \int_{\R^\d}df(\cdot,v)\\
f|_{t=0} = f^\circ,
\end{cases}
\end{equation}
where $f^\circ$ is the weak limit of $f_\vep^\circ$.

Let us now define the \emph{current} $J(x)\coloneqq \int_{\R^\d}v\, df(x,v)$ associated to \eqref{eq:KIE}. Integrating both sides of the first equation in \eqref{eq:KIE} with respect to {$v$}, then using that the spatial density is equal to $\mu_V$ for all time, we find that {$\div J^t$ is constant in time}. To find an equation for $J$, let us differentiate inside the integral to obtain
\begin{equation}
\p_t J = \int_{\R^\d}v\pa*{-v\cdot\nabla_x f + \div_v\Big((\ga v + \nab p) f)}dv.
\end{equation}
Since the integration is with respect to $v$, we can pull out $\nabla_x$ to write
\begin{equation}
-\int_{\R^\d}v (v\cdot\nabla_x)fdv = -\div\int_{\R^\d}v^{\otimes 2}df(\cdot,v),
\end{equation}
where the divergence may be taken with respect to either rows or columns since the tensor is symmetric. Integrating by parts (assuming $f$ vanishes sufficiently rapidly as $|v|\rightarrow\infty$),
\begin{equation}
\int_{\R^\d}v^j\div_v\pa*{(\ga v +\nabla p)f}dv =  -\int_{\R^\d}\delta^{ij}\pa*{\ga v^i+\p_{i}p}df(\cdot,v) = -\ga J^j -\mu_V\p_{j}p.
\end{equation}
Therefore,
\begin{equation}\label{eq:Jeq}
\p_t J + \div\int_{\R^\d}v^{\otimes 2}df(\cdot,v) = -\ga J -\mu_V\nabla p.
\end{equation}
This equation is not closed in terms of $(J,p)$, since the second term on the left-hand side requires knowledge of the second velocity moment of $f$, {which in turn depends on third moment and so on} (this is the famous closure problem for moments of the Vlasov equation, e.g.~see \cite{Uhlemann2018}). But making the \emph{monokinetic} or \emph{``cold electrons''}\footnote{This terminology, which is common in the physics literature, stems from the fact that the temperature of the distribution is zero.} ansatz $f(x,v)=\mu_V(x)\delta(v-u(x))$, it follows that $J=\mu_V u$. Since $\mu_V$ is independent of time, substituting this identity into \eqref{eq:Jeq} yields
\begin{equation}\label{eq:LakeJ}
\begin{cases}
\mu_V\p_t u +\div(\mu_V u^{\otimes 2}) = -\mu_V\pa*{\ga u+\nabla p},\\
\div(\mu_V u) = 0.
\end{cases}
\end{equation}
Assuming that $\mu_V$ is positive on its support, we see from dividing by $\mu_V$ that \eqref{eq:LakeJ} is equivalent to  \smallskip \eqref{eq:Lake}.

The limit as $N\rightarrow\infty$ for fixed $\vep>0$ corresponds to the \emph{mean-field limit} of \eqref{eq:NewODE}. When $\g$ is Coulomb and $\ga=0$, the equation \eqref{eq:Vlas} is known as \emph{Vlasov-Poisson}. More generally, for $\g$ as in \eqref{eq:gmod}, the equation is called \emph{Vlasov-Riesz}. It is a difficult problem to derive the Vlasov-Poisson/Vlasov-Riesz equation directly from \eqref{eq:NewODE}. While the case of regular potentials $\g$ (e.g. globally $C^{1,1}$) \cite{NW1974, BH1977, Dobrushin1979, Duerinckx2021gl} or even just potentials with bounded force $\nabla\g$ \cite{JW2016} is understood, the Coulomb case in general remains out of reach except in dimension 1 \cite{Trocheris1986,Hauray2014}. The best results for singular potentials are limited to forces $\nabla\g$ which are square integrable at the origin \cite{HJ2007, HJ2015, BDJ2024} (which barely misses the two-dimensional Coulomb case) or are for Coulomb potentials with short-distance vanishing cutoff \cite{BP2016, Lazarovici2016, LP2017, Grass2021}.\footnote{If one adds noise to the velocity equation in \eqref{eq:NewODE}, corresponding to the \emph{Vlasov-Fokker-Planck} mean-field limit, then the two-dimensional Coulomb case has been recently achieved by Bresch \emph{et al.} \cite{BJS2022}.} Recently, the second author together with Duerinckx \cite[Appendix]{Serfaty2020} proved the mean-field limit for the Vlasov-Poisson equation--and more generally super-Coulomb Vlasov-Riesz\footnote{Combining the total modulated energy introduced in this work with the commutator estimate \cite[Proposition 4.1]{NRS2021}, one can extend this result to the sub-Coulomb Vlasov-Riesz equation as well.}---for monokinetic/cold initial data, for which the Vlasov-Poisson equation reduces to the \emph{pressureless Euler-Poisson equation}.

In the plasma physics setting of Vlasov-Poisson, the limit $\vep\rightarrow 0$ is called the \emph{quasineutral limit}, and the equation \eqref{eq:KIE}, in the case $\mu_V\equiv 1$, is called the \emph{kinetic incompressible Euler (KIE) equation} \cite{Brenier1989}. The inhomogeneous case of \eqref{eq:KIE} does not seem to have previously appeared in the literature. Nowhere in the above reasoning did we assume a specific form for $\g$ (e.g. Coulomb). This demonstrates a certain \emph{universality} of the KIE for this kind of singular limit, which appears to be a new observation. In this plasma physics setting, the distribution function $f$ models the evolution of electrons against a stationary background of positively charged ions. After a rescaling to dimensionless variables, the parameter $\vep$ corresponds to the \emph{Debye (screening) length} of the system, which is the scale at which charge separation in the plasma occurs. When the Debye length is much smaller than the length scale of observation, the plasma is said to be quasineutral, since it appears neutral to an observer. The rigorous justification of the quasineutral limit is a difficult problem  and has been studied in \cite{BG1994, Grenier1995, Grenier1996,Grenier1999,Brenier2000,Masmoudi2001, HkH2015, HkR2016, HkI2017, HkI2017jde, GpI2018, GpI2020sing}. We refer the reader to the survey \cite{GpI2020qn} and references therein for further discussion on the quasineutral limit.

The preceding formal calculations suggest that in the limit as $N +\vep^{-1}\rightarrow\infty$, which one can physically interpret as a combined mean-field and quasineutral limit, the empirical measure $f_{N,\vep}^t$ of the Newtonian system \eqref{eq:NewODE} converges to a solution $f^t$ of the KIE \eqref{eq:KIE}, which reduces to the Lake equation \eqref{eq:Lake} for monokinetic solutions. Thus, we expect that if the particle velocities $v_i^t \approx u^t(x_i)$, then the empirical measure $f_{N,\vep}^t$ converges to the measure $\delta_{u^t(x)}(v)\mu_V(x)$ as $N+\vep^{-1}\rightarrow\infty$, where $u^t$ is a solution of \eqref{eq:LakeJ}. A more general interpretation of the limit as $N +\vep^{-1}\rightarrow\infty$ is as a \emph{supercritical mean-field limit} of the system \eqref{eq:NewODE}. This terminology coined by Han-Kwan and Iacobelli \cite{HkI2021} refers to the fact that the force experienced by a single particle in \eqref{eq:NewODE} formally diverges as $N\rightarrow\infty$, compared to being $O(1)$ for the usual $1/N$ mean-field scaling. 

\subsection{Prior work}\label{ssec:introPW}

The convergence of the empirical measure to $\delta_{u^t(x)}(v)\mu_V(x)$ was previously shown in the spatially periodic Coulomb case (i.e.~$x\in\T^\d$) when $V=0$ and $\mu_V\equiv 1$ by Han-Kwan and Iacobelli \cite{HkI2021} assuming $\vep N^{\frac{1}{\d(\d+1)}} \rightarrow \infty$ as $\vep+N^{-1}\rightarrow 0$, where $u^t$ is a solution of the incompressible Euler equation. In this work, they recognized (building on the aforementioned work \cite[Appendix]{Serfaty2020} for the mean-field limit) that the modulated-energy method may be also used for this supercritical limit provided one adds a suitable $O(\vep^2)$ corrector to the background spatial density in the definition of the modulated potential energy. We elaborate more on this idea in \cref{ssec:introMPf} below. Their proof may be viewed as a generalization of Brenier's \cite{Brenier2000} modulated-energy approach to proving the quasineutral limit of Vlasov-Poisson with monokinetic data to allow (via renormalization) for the solution of Vlasov-Poisson to be a sum of Diracs.

As in all modulated-energy approaches, the key tool is a functional inequality (see \cref{prop:comm} below) that controls the derivative of the modulated potential energy along a transport by the modulated potential energy itself:
\begin{align}\label{eq:introcomm}
\frac12\int_{(\R^\d)^2\backslash \triangle} (v(x)-v(y)) \cdot \nabla \g(x-y) d ( \mu_N- \mu)^{\otimes 2} (x,y) \le C_v(\Fr_N(\XN,\mu) + CN^{-\al})
\end{align}
for a constant $C_v$ depending quantitatively on the vector field $v$ and some $\alpha>0$. Such a functional inequality is, in fact, a type of \emph{commutator} estimate; namely, the quadratic form associated to the commutator $v\cdot \nab \g\ast - \div(\g\ast\cdot)$, ignoring the excision of the diagonal. Specifically, \cite{HkI2021} used the inequality of \cite{Serfaty2020}, whose non-sharp $O(N^{\frac{1}{\d(\d+1)}})$ additive error leads to the aforementioned restriction on $\vep$. 

In the same setting, this convergence was subsequently improved by the first author \cite{Rosenzweig2021ne} to
\begin{equation}\label{eq:RoseSA}
\lim_{\vep+N^{-1}\rightarrow 0} \vep^{-2}N^{-\frac{2}{\d}} =0
\end{equation}
using a sharp commutator estimate for the Coulomb case \cite{LS2018, Serfaty2023, Rosenzweig2021ne}. Sharper estimates in terms of the dependence on the solution $u$ of Euler's equation were also shown. Moreover, in \cite{Rosenzweig2021ne}, it was conjectured that the scaling assumption \eqref{eq:RoseSA} should be in general optimal, in the sense that the incompressible Euler equation should not be the limiting evolution of the empirical measure when $\vep^{-2} \ll N^{-\frac2\d}$.

These previous works are limited to the Coulomb case on the torus, which is an idealized setting since it assumes a spatially uniform density. Moreover, they left open the question of a rigorous justification of the universality of the Lake equation with respect to the interaction and confinement in the sense that it only depends on the equilibrium measure. Finally, these previous works also left open the optimality of the scaling relation between $\vep$ and $N$. As we now explain, we settle these questions for (super-)Coulomb Riesz interactions.


\subsection{Informal statement of main results}\label{ssec:introMR}

To present our result, we introduce the \emph{total modulated energy}
\begin{equation}\label{eq:soMEdef}
\Hr_{N}(\uz_N^t, u^t) \coloneqq \frac{1}{2N}\sum_{i=1}^N |v_i^t-u^t(x_i^t)|^2 + \frac{1}{\vep^2}\Fr_N(\ux_N^t,\mu_V+\vep^2\Uu^t)  + \frac{1}{\vep^2 N}\sum_{i=1}^N \zeta(x_i^t).
\end{equation}
Here, $\uz_N^t$ is a solution of the $N$-particle system \eqref{eq:NewODE}. The vector field $u^t$ is not a solution of the Lake equation \eqref{eq:Lake} but rather an extension of a solution from $\supp \mu_V $ to all of $\R^\d$, such that the regularity is preserved and $\div(\mu_V u) = 0$ (see \cref{sec:Lake}). The physical Cauchy problem for \eqref{eq:Lake} is in the domain $\supp\mu_V$ subject to a no-flux boundary condition, but at the microscopic dynamics level \eqref{eq:NewODE}, the particles are not confined to $\supp\mu_V$. The extension allows us to compare between the two settings. 

The first term of \eqref{eq:soMEdef} is the \emph{modulated kinetic energy}. The second term is the \emph{modulated potential energy} { introduced in \cite{SS2015,PS2017} and first used in the dynamical setting in \cite{Duerinckx2016, Serfaty2020},}  where
\begin{align}\label{eq:introMPEdef}
\Fr_N(X_N^t,\mu_V+\vep^2\Uu^t) \coloneqq \frac12\int_{(\R^\d)^2\setminus\triangle}\g(x-y)d\Big(\frac1N\sum_{i=1}^N\delta_{x_i^t} - \mu_V - \vep^2\Uu^t\Big)^{\otimes 2}(x,y),
\end{align}
{and $\Uu^t$ is an $N$-independent \emph{corrector}, whose precise definition is provided in \eqref{eq:Uudef}.  
Thanks to a certain {\it electric formulation}, this modulated energy has good coercivity properties and controls  a form of distance between the empirical measure $\frac1N \sum_{i=1}^N \delta_{x_i^t} $ and $\mu_V+\vep^2 \Uu$. These properties are reviewed in  \cite[Chap. 4]{SerfatyLN}), we will here quote the ones we need in Section \ref{sec:ME}.}

The third term is
\begin{align}\label{eq:introzetadef}
\zeta\coloneqq \g\ast\mu + V - c
\end{align}
for Robin constant $c$ (see \cref{ssec:EMOPem} for further elaboration), {which is nonnegative and vanishes on $\supp \mu_V$}.  Strictly speaking, the quantity \eqref{eq:soMEdef} depends on both $\vep,N$. But since we view $\vep$ as a function of $N$, we omit the dependence on $\vep$ to lighten the notation.

An informal statement of our main result is given in \cref{thm:mainSMF} below. In \cref{sec:MP}, we will give a precise statement (see \cref{thm:mainSMFrig}), in particular clarifying the vague assumptions,  after we have reviewed necessary background facts.  We also prove a generalization of our findings to the case of \emph{regular} interactions in \cref{sec:appreg}; but for brevity, we omit discussion of this result from the introduction.

\begin{thm}[Informal]\label{thm:mainSMF}
Suppose that the equilibrium measure $\mu_V$ is sufficiently regular and $\Sigma\coloneqq\supp  \mu_V$ {is a sufficiently smooth domain}. Assume $u$ is a sufficiently regular solution to \eqref{eq:Lake} on $[0,T]$. Then there exist continuous functions $C_1,C_2,C_3: [0,T]\rightarrow \R_+$, which depend on $\d,\s, \ga$, and norms of $u$, such that for any solution $Z_N^t = (X_N^t, V_N^t)$ of \eqref{eq:NewODE}, it holds that
\begin{align}\label{eq:mainSMF}
\Hr_{N}(\uz_N^t, u^t) + \frac{\log N}{2\d N\vep^2}\indic_{\s=0} \leq e^{C_1^t}\Big(\Hr_{N}(\uz_N^0, u^0)   +\frac{\log N}{2\d N\vep^2}\indic_{\s=0} + \frac{C_2^tN^{\frac{\s}{\d}-1}}{\vep^2} {+ C_3^t\vep^2} \Big).
\end{align}
In particular, if 
\begin{equation}\label{eq:SMFscl}
\lim_{\vep+\frac{1}{N}\rightarrow 0} \Big(\Hr_{N}(\uz_N^0,u^0) + \frac{\log N}{2\d N\vep^2}\indic_{\s=0}\Big) =  \lim_{\vep+\frac{1}{N}\rightarrow 0}  \frac{N^{\frac{\s}{\d}-1}}{\vep^2} =0,
\end{equation}
then
\begin{equation}\label{eq:SMFweak}
\frac1N\sum_{i=1}^N\delta_{z_i^t} \xrightharpoonup[\vep+\frac{1}{N}\rightarrow 0]{} \delta_{u^t(x)}(v)\mu_V(x), \qquad \forall t\in [0,T]
\end{equation}
in the weak-* topology for measures.
\end{thm}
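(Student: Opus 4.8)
The plan is to prove \cref{thm:mainSMF} by a modulated-energy Grönwall argument: first establish the differential inequality
\[
\frac{d}{dt}\Big(\Hr_{N}(\uz_N^t,u^t)+\tfrac{\log N}{2\d N\vep^2}\indic_{\s=0}\Big)\le C_1^t\Big(\Hr_{N}(\uz_N^t,u^t)+\tfrac{\log N}{2\d N\vep^2}\indic_{\s=0}\Big)+\frac{C_2^tN^{\frac{\s}{\d}-1}}{\vep^2}+C_3^t\vep^2 ,
\]
from which \eqref{eq:mainSMF} follows by integration, and then deduce the weak-$*$ convergence \eqref{eq:SMFweak} from the coercivity of the modulated potential energy once the scaling hypothesis \eqref{eq:SMFscl} is in force. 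I would begin by differentiating the modulated kinetic energy $\frac{1}{2N}\sum_i|v_i^t-u^t(x_i^t)|^2$ in time along \eqref{eq:NewODE}: using $\dot v_i=-\ga v_i-\frac{1}{\vep^2N}\sum_{j\ne i}\nabla\g(x_i-x_j)-\frac{1}{\vep^2}\nabla V(x_i)$ together with $\frac{d}{dt}u^t(x_i^t)=\p_t u^t(x_i)+(v_i\cdot\nabla)u^t(x_i)$, and substituting the Lake equation $\p_t u=-\ga u-u\cdot\nabla u-\nabla p$, the forcing reorganizes into a dissipative friction term $-\frac{\ga}{N}\sum_i|v_i-u(x_i)|^2$, an advection term $-\frac1N\sum_i(v_i-u(x_i))\cdot\big((v_i-u(x_i))\cdot\nabla\big)u(x_i)$ bounded by $\|\nabla u\|_{L^\infty}$ times the modulated kinetic energy, and a cross term pairing $v_i-u(x_i)$ against $\nabla p(x_i)-\frac{1}{\vep^2}\nabla(V+\g\ast\mu_N^{\ne i})(x_i)$. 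Here I invoke the equilibrium/obstacle-problem characterization of $\mu_V$: one has $\nabla(V+\g\ast\mu_V)=\nabla\zeta$ with $\zeta\ge 0$ vanishing (to second order) on $\Sigma=\supp\mu_V$, so the term $\frac{1}{\vep^2 N}\sum_i\zeta(x_i)$ of $\Hr_N$ is present precisely so that its derivative $\frac{1}{\vep^2N}\sum_i v_i\cdot\nabla\zeta(x_i)$ absorbs the off-support part of the self-consistent field.

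Next I would differentiate the modulated potential energy $\frac{1}{\vep^2}\Fr_N(X_N^t,\mu_V+\vep^2\Uu^t)$ using its electric/extension formulation, transporting the empirical measure by the true velocities $v_i$ and the background $\mu_V+\vep^2\Uu^t$ by $u^t$ (consistent since $\div(\mu_V u)=0$ and $\Uu^t$ solves the associated continuity/transport equation by construction, up to a remainder producing the $C_3^t\vep^2$ term). This derivative splits into: (i) a commutator quadratic form $\frac12\int_{(\R^\d)^2\setminus\triangle}(u(x)-u(y))\cdot\nabla\g(x-y)\,d(\mu_N-\mu_V-\vep^2\Uu)^{\otimes2}$, to which I apply the sharp Riesz functional inequality \eqref{eq:introcomm} (stated precisely in \cref{prop:comm}) to bound it by $C_u\big(\Fr_N+CN^{\frac{\s}{\d}-1}\big)$ — this is the origin of the $C_2^tN^{\frac\s\d-1}/\vep^2$ error, which specializes to the conjectured-optimal $N^{-2/\d}$ in the Coulomb case $\s=\d-2$; and (ii) a cross term $\frac{1}{\vep^2N}\sum_i(v_i-u(x_i))\cdot\nabla\g\ast(\mu_N-\mu_V-\vep^2\Uu)(x_i)$. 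The decisive cancellation is that, writing $\nabla\g\ast(\mu_N-\mu_V-\vep^2\Uu)=\nabla\g\ast(\mu_N^{\ne i}-\mu_V)-\vep^2\nabla\g\ast\Uu$ (the self-interaction discrepancy producing the $\log N$ renormalization for $\s=0$ and an $O(N^{\frac\s\d-1})$ term otherwise), and using that the corrector $\Uu^t$ is designed so that $\g\ast\Uu^t=p^t$ on $\Sigma$, the $-\vep^2\nabla\g\ast\Uu$ contribution cancels the $+\nabla p(x_i)$ term from the kinetic side, while the two $\pm\frac{1}{\vep^2N}\sum_i(v_i-u(x_i))\cdot\nabla\g\ast(\mu_N^{\ne i}-\mu_V)(x_i)$ contributions cancel exactly; what remains, after Cauchy--Schwarz on the friction and advection terms, is absorbed into $C_1^t\Hr_N$. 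This closes the differential inequality; Grönwall then yields \eqref{eq:mainSMF}. Under \eqref{eq:SMFscl} the right-hand side of \eqref{eq:mainSMF} vanishes as $\vep+N^{-1}\to0$, so $\Hr_N^t\to0$; the coercivity of $\Fr_N$ recalled in \cref{sec:ME} forces $\|\mu_N^t-\mu_V-\vep^2\Uu^t\|\to0$ in a negative Sobolev metric, hence $\mu_N^t\rightharpoonup\mu_V$ (as $\vep^2\Uu^t\to0$ with the requisite regularity), while simultaneously $\frac1N\sum_i|v_i^t-u^t(x_i^t)|^2\to0$; testing $\frac1N\sum_i\delta_{(x_i^t,v_i^t)}$ against $\varphi\in C_c((\R^\d)^2)$ and splitting $\varphi(x_i,v_i)-\varphi(x_i,u(x_i))$ (small in average by the kinetic bound, Chebyshev, and uniform continuity) from $\frac1N\sum_i\varphi(x_i,u(x_i))=\int\varphi(x,u(x))\,d\mu_N^t\to\int\varphi(x,u(x))\,d\mu_V$ yields \eqref{eq:SMFweak}.

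The main obstacle, and what distinguishes this from the torus/$\mu_V\equiv1$ setting of \cite{HkI2021,Rosenzweig2021ne}, is the interaction between the nonuniform, compactly supported equilibrium measure and the renormalized commutator estimate: $\mu_V$ may degenerate at the free boundary $\p\Sigma$, and one must control the smearing and truncation underlying \eqref{eq:introcomm} uniformly up to $\p\Sigma$, which is exactly where the fractional obstacle-problem regularity theory enters. Coupled to this, one must actually \emph{construct} the corrector $\Uu^t$ with enough regularity that $\mu_V+\vep^2\Uu^t$ is an admissible (signed but controlled) background and that its imperfect transport by $u^t$ contributes only an $O(\vep^2)$ error — which again requires regularity of the pressure $p^t$ (solving \eqref{eq:press}) and of $\mu_V$ — and one must extend the Lake-equation velocity off $\Sigma$ preserving both its Sobolev regularity and the constraint $\div(\mu_V u)=0$. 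I expect these regularity-theoretic inputs, rather than the algebraic bookkeeping of the Grönwall estimate, to be the technical heart of the proof.
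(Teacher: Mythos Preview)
Your overall architecture is correct and matches the paper: differentiate $\Hr_N$, isolate a commutator to which \cref{prop:comm} applies, use the corrector $\Uu$ to absorb the pressure-type contribution, Grönwall, then coercivity for weak convergence. The algebraic cancellation between the $\pm T_1$ cross terms is exactly right.

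There is, however, a genuine gap in how you handle the $\zeta$ contribution and a misattribution of the obstacle-problem input. After the cancellations you describe, the $\zeta$ derivative does \emph{not} fully absorb the self-consistent field: the $v_i$ pieces cancel against matching contributions from the kinetic and potential derivatives, but what survives is
\[
\frac{1}{\vep^2 N}\sum_{i=1}^N u^t(x_i^t)\cdot\nabla\zeta(x_i^t),
\]
which carries the dangerous $\vep^{-2}$ prefactor and has no commutator structure. This is precisely the new term relative to \cite{HkI2021,Rosenzweig2021ne}, and it must be bounded by $C\|u^t\|_{W^{1,\infty}}\cdot\frac{1}{\vep^2 N}\sum_i\zeta(x_i^t)$ so as to be reabsorbed into $\Hr_N$. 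Doing so requires the pointwise inequality $|u(x)\cdot\nabla\zeta(x)|\le C\|u\|_{W^{1,\infty}}\zeta(x)$ of \cref{lem:ugradVext}, and \emph{this} is where the fractional obstacle-problem regularity theory enters: one needs the sharp lift-off $\zeta\gtrsim d^{1+\frac{\d-\s}{2}}$ together with matching upper bounds on the normal and tangential derivatives of $\zeta$ near $\p\Sigma$, and crucially the no-flux condition $u\cdot\nu=0$ on $\p\Sigma$ to kill the leading normal contribution. The commutator estimate \cref{prop:comm} itself only needs $\mu_V+\vep^2\Uu\in L^\infty$ and is insensitive to the boundary of $\Sigma$, so your diagnosis that the obstacle regularity is needed ``to control the smearing and truncation underlying \eqref{eq:introcomm} uniformly up to $\p\Sigma$'' is off target.

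A related point: your description of $\Uu$ as satisfying $\g\ast\Uu^t=p^t$ on $\Sigma$ is not quite how the paper sets it up. Since particles need not remain in $\Sigma$, the paper works with a global extension of $u$ and defines $\Uu$ by $-\nabla h^{\Uu}=\p_tu+\ga u+u\cdot\nabla u$ in all of $\R^\d$ (equivalently $\Uu=(-\Delta)^{\frac{\d-2-\s}{2}}\div(\p_tu+\ga u+u\cdot\nabla u)$), so that the cancellation with the third line of \eqref{eq:dtHcon} is exact pointwise, not only on $\Sigma$. The $O(\vep^2)$ residual $C_3^t\vep^2$ then comes not from ``imperfect transport of $\Uu$'' but from estimating the linear functional $\int(\div h^{u\Uu}+h^{\p_t\Uu})\,d(\mu_N-\mu_V-\vep^2\Uu)$ by coercivity (\cref{lem:MPEcoer}) and Young's inequality.
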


The scaling assumption $N^{\frac{\s}{\d}-1}/\vep^2\rightarrow 0$ in \eqref{eq:SMFscl} is in general optimal, in the sense that there exists a sequence of solutions $\uz_N^t$ to \eqref{eq:NewODE} such that ${\frac1N\sum_{i=1}^N \delta_{z_i^t}} \xrightharpoonup[]{} \delta_{u^t(x)}(v)\mu_V(x)$ as $\vep+\frac1N\rightarrow 0$, but the total modulated energy $\Hr_{N}(\uz_N^t,u^t)$ does not vanish. This is a consequence of the next-order asymptotics for mean-field limits of log/Coulomb/Riesz energies obtained in \cite{SS2015,SS2015log,RS2016,PS2017}, comprehensively reviewed in  \cite{SerfatyLN}.

More precisely, suppose that $\ux_N^\circ$ is a minimizer of the microscopic energy
\begin{equation}\label{eq:ENdef}
\mathcal{H}_N(\ux_N) \coloneqq \frac{1}{2N}\sum_{1\leq i\neq j\leq N} \g(x_i-x_j) + \sum_{i=1}^N V(x_i).
\end{equation}
By taking variations, we see that $\ux_N^\circ$ is a critical point, i.e. 
\begin{equation}
\forall 1\le i\le N, \qquad \frac{1}{N}\sum_{1\leq j\leq N: j\neq i}\nabla\g(x_i^\circ-x_j^\circ) + \nabla V(x_i^\circ) = 0.
\end{equation}
For each $1\le i\le N$, define
\begin{equation}
\forall t\ge 0, \qquad (x_i^t, v_i^t) \coloneqq (x_i^\circ,0),
\end{equation}
so that  $\uz_N^t\coloneqq (x_i^t,v_i^t)_{i=1}^N$ is evidently the unique (stationary) solution of \eqref{eq:NewODE} with $\ga=0$, which is moreover \emph{independent of $\vep$}.
As a consequence of the results of \cite{SS2015,SS2015log,RS2016,PS2017}, it holds that $\frac{1}{N}\sum_{i=1}^N \delta_{x_i^\circ} \xrightharpoonup[N\rightarrow\infty]{} \mu_V$ and
\begin{equation}
\frac{1}{N}\sum_{i=1}^N\underbrace{\zeta(x_i^\circ)}_{=0} + \Fr_N(\ux_N^\circ, \mu_V) + \frac{\log N}{2\d N}\indic_{\s=0} = \mathsf{C}_{\d,\s}^VN^{\frac{\s}{\d}-1} + o(N^{\frac{\s}{\d}-1}) \quad \text{as $N\rightarrow\infty$},
\end{equation}
where $\mathsf{C}_{\d,\s}^V$ is a computable constant depending only on $\d,\s,V$ {that encodes thermodynamic information at the microscale.} That $\zeta(x_i^\circ)=0$ for each $i$ follows from the fact that minimizing point configurations lie in the support of the equilibrium measure, on which $\zeta$ vanishes (see \cite[Theorem 5]{PS2017}). 

 Hence,
\begin{equation}
\Hr_{N}(\uz_N^t, 0)+  \frac{\log N}{2\d N\vep^2}\indic_{\s=0} = \frac{1}{\vep^2}\pa*{\Fr_N(\ux_N^\circ, \mu_V)+ \frac{\log N}{2\d N}\indic_{\s=0}} =  \mathsf{C}_{\d,\s}^V\frac{N^{\frac{\s}{\d}-1}}{\vep^2} + o(\frac{N^{\frac\s\d-1}}{\vep^2}).
\end{equation}
Thus, 
we cannot expect vanishing of the total modulated energy in the supercritical mean-field regime if $\vep \leq N^{\frac{\s-\d}{2\d}}$.

 The observations of the preceding paragraph lead one to ask what is the effective equation describing the system \eqref{eq:NewODE} as $\vep+\frac{1}{N}\rightarrow 0$, assuming that $\vep \leq N^{\frac{\s-\d}{2\d}}$. Naively, one might expect that when $\vep \ll N^{\frac{\s-\d}{2\d}}$, this behaves like first sending $\vep\rightarrow 0$ and then $N\rightarrow\infty$. However, this limit does not make sense in general. Indeed, multiplying both sides of the second equation of \eqref{eq:NewODE} by $\vep^2$ and letting $\vep\rightarrow 0$ for fixed $N$, we formally see that the limiting positions  $\ux_{N}^t$ should be a critical point of the energy \eqref{eq:ENdef}. If each limiting velocity $v_{i}^t$ is nonzero, we would not expect $\ux_{N}^t$ to remain a critical point for all $t$. On the other hand, there is no mechanism to force the velocities to tend to zero even if the initial positions are a critical point of the energy \eqref{eq:ENdef}. We give an explicit demonstration of the failure of this limit for the exactly solvable one-dimensional Coulomb case in \cref{sec:1DCou} precisely when $\vep^{-2}N^{\frac{\s}{\d}-1} = (\vep N)^{-2}$ does not vanish, showing that there need not be any weak limit for the empirical measure, even in the simplest setting.  

Let us close this subsection with some further remarks about \cref{thm:mainSMF}.

\begin{remark}\label{rem:initMEvan}
Given $u^\circ$, one can produce statistically generic examples of initial data $\uz_N^\circ$ such that \eqref{eq:SMFscl} holds. Indeed, suppose that $(x_i^\circ)_{i=1}^\infty$ is a sequence of iid random points in $\R^\d$ with law $\mu_V$. For each $i\ge 1$, choose $v_i^\circ \in B(u^\circ(x_i^\circ), r_N)$, where the radius $r_N\rightarrow 0$ as $N\rightarrow\infty$. Then it is easy to check that for $\uz_N^\circ = (z_i^\circ)_{i=1}^N$ with $z_i^\circ = (x_i^\circ,v_i^\circ)$,  we have
\begin{multline}\label{eq:initMEvan}
\E\Big(\Hr_{N}(\uz_N^\circ, u^\circ) + \frac{\log N}{2\d N \vep^2}\indic_{\s=0} \Big)= {O(r_N^2)}  - \frac{1}{2\vep^2 N}\int_{(\R^\d)^2}\g(x-y)d(\mu_V)^{\otimes2}(x,y) + \frac{\log N}{2\d N \vep^2}\indic_{\s=0} \\
 + \frac{1}{\vep^2}\int_{\R^\d}\zeta d\mu_V.
\end{multline}
Since $\zeta$ is identically zero on the support of $\mu_V$, the last term vanishes. Evidently, the remaining terms on the right-hand side tend to zero as $\vep+\frac{1}{N}\rightarrow 0$ assuming that $\vep^2N\rightarrow\infty$ and $\s\ne 0$.  In the case $\s=0$, we need the distribution of $\XN^\circ$ to be sufficiently correlated so that
\begin{align}
\frac{1}{\vep^2}\E\Big(\Fr_{N}(\XN,\mu_V) + \frac{\log N}{2\d N}\indic_{\s=0}\Big)
\end{align}
vanishes as $\vep+\frac1N\rightarrow 0$ assuming that $\vep^2N \rightarrow\infty$. For this, it suffices to take $\XN^\circ$ to be distributed according to a modulated Gibbs measure
\begin{align}
d\mathbb{Q}_{N,\be}(\mu_V) \coloneqq \frac{1}{\mathsf{K}_{N,\be}(\mu_V)}e^{-\be N\Fr_N(\XN,\mu_V)}d\XN, \qquad \be \gtrsim N,
\end{align}
which corresponds to a (low-temperature) log gas. We refer to \cite[Chap. 5 and 6]{SerfatyLN} for details.
\end{remark}


\begin{remark}
By adapting ideas from our prior work \cite{RS2021} on first-order mean-field limits with additive noise, we expect that one can generalize our result to treat the \emph{Langevin system} with vanishing noise, so that the system \eqref{eq:NewODE} of ODEs is instead now a system of SDEs
\begin{align}\label{eq:NewSDE}
\begin{cases}
d{x}_i^t = v_i^tdt \\
d{v}_i^t = -\ga v_i^tdt  \displaystyle-\frac{1}{\vep^2 N} \sum_{1\leq j\leq N: j\neq i}\nabla\g(x_i^t-x_j^t)dt -\frac{1}{\vep^2}\nabla V(x_i^t)dt + \sqrt{2/\be}dW_i^t,
\end{cases}
\end{align}
where $W_1,\ldots,W_N$ are independent standard $\d$-dimensional Wiener processes and the differential is in the It\^o sense. In this case, the noise models thermal fluctuations at the microscopic level and the parameter $\be\geq 0$ has the interpretation of \emph{inverse temperature} and may depend on $N$.

 If one runs the same derivation as sketched in \cref{ssec:introMFQN} starting from \eqref{eq:NewSDE}, then the KIE \eqref{eq:KIE} now has an additional term $\be^{-1} \Delta_v f$ in the right-hand side of the first equation. The resulting equation makes mathematical sense, although we are not away of its study in the literature. However, the monokinetic ansatz is in general not compatible with this diffusion term. To understand why, observe that $\int_{\R^\d}v\Delta_v f dv = 0$ by integration by parts, so there is no contribution to the equation \eqref{eq:Jeq} for the current $J$; but by It\^o's formula, the noise has an order 1 contribution to the kinetic energy at the microscopic level, which does not vanish in the limit. To rectify this issue, we have to require that $\be=\be_N \rightarrow\infty$ as $N\rightarrow\infty$, so that the contribution of the noise vanishes in the limit.  Thus, thermal fluctuations vanish as $N\rightarrow\infty$, and the limiting equation is still \eqref{eq:Lake}. 
\end{remark}

\subsection{Method of proof}\label{ssec:introMPf}

The quantity \eqref{eq:soMEdef} is a variant of the total modulated energy originally introduced by Duerinckx and the second author \cite[Appendix]{Serfaty2020} to treat the mean-field limit for Vlasov-Riesz in the monokinetic regime. The idea to incorporate a time-dependent corrector $\Uu^t$ in the modulated potential energy for supercritical mean-field scalings originates in the aforementioned work of Han-Kwan and Iacobelli \cite{HkI2021}. The scaling by $\vep^2$ in the expression of $\mu_V+\vep^2\Uu^t$ reflects $O(\vep^2)$ fluctuations around the macroscopic equilibrium spatial density $\mu_V$. The addition of the last term in \eqref{eq:soMEdef} is a new contribution of the present work and reflects the fact that our starting system \eqref{eq:NewODE} is confined by an external potential $V$, as opposed to making an \emph{a priori} assumption that the domain of the problem is compact, e.g. $\T^\d$ as in \cite{HkI2021, Rosenzweig2021ne}. Although the $\zeta$ term appears to be only $O(1)$, it is, in fact, zero if the particles remain in the support of $\mu_V$, i.e. the quasineutral assumption is propagated. In analogy to the relationship between \cite{HkI2021} and \cite{Brenier2000}, our total modulated energy \eqref{eq:soMEdef} may be viewed as a renormalization of the total modulated energy from \cite{BCGM2015}, so as to allow for the Vlasov solution $f_\vep^t = \frac1N\sum_{i=1}^N \delta_{z_i^t}$. 

As with all modulated-energy approaches, our proof (see \cref{sec:MP} for the main argument) is based on establishing a Gr\"{o}nwall relation for the total modulated energy \eqref{eq:soMEdef}. The time derivative of this quantity has several terms that require different consideration (see \cref{lem:MEsmfID}). 

The main contribution from the modulated kinetic energy is trivially estimated using Cauchy-Schwarz. The main contribution from the modulated potential energy is a commutator of the form of the left-hand side of \eqref{eq:introcomm} with $v=\tl{u}^t$, the extension of the solution $u^t$ of the Lake equation to the whole space (see \cref{sec:Lake} for further elaboration), and $\mu=\mu_V+\vep^2\Uu^t$. To handle this term, we crucially rely on the authors' recent sharp estimate \cite[Theorem 1.1]{RS2022}, recorded in \cref{prop:comm} below, which is what enables us to achieve the scaling assumption \eqref{eq:SMFscl}. 

Another term,  new compared to \cite{HkI2021, Rosenzweig2021ne} and coming from the contribution of the $\zeta(x_i^t)$, is
\begin{align}
\frac1N\sum_{i=1}^N u^t(x_i^t)\cdot\nab\zeta(x_i^t) = \int_{\R^\d} u\cdot\nab\zeta d\mu_N^t.
\end{align}
This term has no commutator structure. Instead, we manage to bound it by $C\|u^t\|_{W^{1,\infty}}\frac1N\sum_{i=1}^N \zeta(x_i^t)$ thanks to \cref{lem:ugradVext}. The proof of this lemma relies on the fact that $u^t$ satisfies a no-flux condition on the boundary of $\Sigma = \supp\mu_V$ (a consequence of taking the quasineutral limit) and some nontrivial results for the regularity of the free boundary for solutions of the obstacle problem for the fractional Laplacian (see \cref{sec:appOP}), which may be of independent interest.  We mention that a similar term was encountered in \cite{BCGM2015} in the Coulomb case, where the $\mu_N^t$ is replaced by the spatial density $\mu^t$ of Vlasov-Poisson, but handled by \emph{ad hoc} arguments {using the local nature of the Laplacian}.

The correction $\vep^2\Uu^t$ in the spatial density is to cancel out the contribution of the pressure in \eqref{eq:Lake} when one differentiates the modulated kinetic energy. The exact definition of $\Uu^t$ is given in \eqref{eq:Uudef}, and we refer to \eqref{eq:dtHcon} and \eqref{eq:nabhUu} for the exact cancellation. 

There are also several residual terms that have the form $\int_{\R^\d} \phi\,  d(\mu_N-\mu_V-\vep^2\Uu)$, where $\phi$ is a function of $u,\Uu$. These may be controlled by the modulated potential energy, thanks to its coercivity (see \cref{lem:MPEcoer}), plus errors which are $O(\vep^{-2}N^{\frac{\s}{\d}-1})$, hence acceptable.

Combining the estimates for the various terms and appealing to the Gr\"onwall-Bellman lemma yields the inequality \eqref{eq:mainSMF}. As explained in {\cite[Section 4.2]{Rosenzweig2021ne}}, the weak convergence of the empirical measure follows from the vanishing of the total modulated energy using the coercivity of the modulated potential energy. This then completes the proof of \cref{thm:mainSMF}.

\begin{remark}\label{rem:SCrest}
We have considered only the Coulomb/super-Coulomb sub-case of log/Riesz interactions for two reasons. First, this is the only case where we can show our results are sharp because of the sharpness of our commutator estimates. Only a non-sharp commutator estimate is available in the sub-Coulomb case by work of the authors and Q.H. Nguyen \cite{NRS2021}. Second, and more importantly, there does not seem to be an adequate regularity theory for the obstacle problem for higher-order powers of the fractional Laplacian (see \cite{DHP2023} for some progress in this direction). If one assumes that the equilibrium measure has full support in $\R^\d$, or restricts to the torus where full support is easily established under a smallness condition on the confinement, then all terms involving $\zeta$ vanish. One can then treat the sub-Coulomb case by following the same proof in this paper, using instead the non-sharp commutator estimate of \cite{NRS2021}, leading to vanishing of $\Hr_{N}(\uz_N^t,u^t)$ as $\vep+\frac{1}{N}\rightarrow 0$, provided that
\begin{equation}
\lim_{\vep+\frac{1}{N}\rightarrow 0} \frac{1}{\vep^2}\pa*{N^{-\frac{2}{(\s+2)(\s+1)}}+\frac{\log N}{N}\indic_{\s=0}} =0,
\end{equation}
which we believe is a suboptimal scaling assumption.
\end{remark}

\subsection{Organization of article}\label{ssec:introOrg}
Let us comment on the organization of the remaining body of the paper.

In \cref{sec:EMOP}, we review the basic minimization problem for the equilibrium measure (\cref{ssec:EMOPem}) and the connection to the obstacle problem for the fractional Laplacian (\cref{ssec:EMOPop}). We also clarify the precise regularity and topological assumptions imposed on $\mu_V$ and its support in this paper (\cref{ssec:EMOPassmp}).

In \cref{sec:ME}, we review basic properties of the modulated potential energy in the form of its almost positivity (\cref{lem:MPElb}) and coercivity (\cref{lem:MPEcoer}). We also record the sharp commutator estimate from \cite{RS2022} referenced in the introduction (\cref{prop:comm}).

In \cref{sec:Lake}, we review the local well-posedness of the Lake equation \eqref{eq:Lake} in bounded domains subject to the no-flux boundary condition (\cref{prop:LakeWP}). We also review the procedure for extending the Lake equation solution to the whole space (\cref{lem:ugradVext}).

In \cref{sec:MP}, we prove our main result \cref{thm:mainSMF}. We start by giving a mathematically precise version of \cref{thm:mainSMF} in the form of \cref{thm:mainSMFrig} (\cref{ssec:MPmr}). {We then compute the differential identity (see \cref{lem:MEsmfID}) obeyed by the total modulated energy (\cref{ssec:MPcomp}). Finally, we show the Gr\"onwall relation for the total modulated energy, completing the proof of \cref{thm:mainSMF} (\cref{ssec:MPgron}).}

In \cref{sec:1DCou}, we present the explicit example in the one-dimensional case which shows that the empirical measure may have no weak limit if the scaling assumption \eqref{eq:SMFscl} fails. The main result is \cref{prop:1DCou}. 

There are two appendices. In \cref{sec:appreg}, we present the previously advertised generalization of our modulated-energy method to sufficiently regular interactions but which lack the Riesz-type structure that allows a commutator estimate of the type \cref{prop:comm} to hold. The main result of this appendix is \cref{thm:mainSMFrigreg}. In \cref{sec:appOP}, we review some facts about regularity theory for the fractional obstacle problem, as well as prove some new nondegeneracy results for the free boundary, which are needed for \cref{lem:ugradVext}.

\subsection{Acknowledgments}
The first author thanks the Institute for Computational and Experimental Research in Mathematics (ICERM) for its hospitality, where part of the research for this project was carried out during the Fall 2021 semester program ``Hamiltonian Methods in Dispersive and Wave Evolution Equations.'' He also thanks the Courant Institute of Mathematical Sciences at NYU for their hospitality during his visit in April 2024.
 Both authors thank Stephen Cameron and Xavier Ros-Oton for helpful remarks on the fractional obstacle problem.
 
\subsection{Notation}\label{ssec:preN}

We close the introduction with the basic notation used throughout the article without further comment, following the conventions of \cite{RS2022}.

Given nonnegative quantities $A$ and $B$, we write $A\lesssim B$ if there exists a constant $C>0$, independent of $A$ and $B$, such that $A\leq CB$. If $A \lesssim B$ and $B\lesssim A$, we write $A\sim B$. Throughout this paper, $C$ will be used to denote a generic constant which may change from line to line. Also $(\cdot)_+$ denotes the positive part of a number.

$\N$ denotes the natural numbers excluding zero, and $\N_0$ including zero. {For $N\in\N$, we abbreviate $[N]\coloneqq \{1,\ldots,N\}$.} $\R_+$ denotes the positive reals. Given $x\in\R^\d$ and $r>0$, $B(x,r), B_r(x)$ and $\p B(x,r), \p B_r(x)$ respectively denote the ball and sphere centered at $x$ of radius $r$. Given a distribution $f$, we denote its support by $\supp f$. The notation $\nabla^{\otimes k}f$ denotes the $k$-tensor field with components $(\p_{i_1}\cdots\p_{i_k}f)_{1\leq i_1,\ldots,i_k\leq \d}$.

$\P(\R^\d)$ denotes the space of Borel probability measures on $\R^\d$. If $\mu$ is absolutely continuous with respect to Lebesgue measure, we shall abuse notation by writing $\mu$ for both the measure and its density function. When the measure is clearly understood to be Lebesgue, we shall simply write $\int_{\R^{\d}}f$ instead of $\int_{\R^\d}fdx$.

{
$C^{k,\alpha}(\R^\d)$ denotes the inhomogeneous space of $k$-times differentiable functions on $\R^\d$ whose $k$-th derivative is $\alpha$-H\"{o}lder continuous, for $\al\in [0,1]$ (i.e. $\alpha=0$ is bounded and $\alpha=1$ is Lipschitz). As per convention, a $\dot{}$ superscript denotes the homogeneous space/seminorm.} {With a slight abuse of notation, we let $C^{\ga}$  denote the Besov space $B_{\infty,\infty}^{\ga}$, which coincides with the H\"older space $C^{k,\ga-k}$ when $k<\ga<k+1$ for integer $k$, but is strictly larger than the usual $C^\ga$ space for integer $\ga$.} {We let $H^{\ga}=W^{\ga,2}$ denote the standard $L^2$ Sobolev space of $f$ such that $(I-\Delta)^{\ga/2}f\in L^2$.} Finally, we let $\Sc$ and $\Sc'$ denotes the space of Schwartz functions and the space of tempered distributions, respectively.
\section{The equilibrium measure and the (fractional) obstacle problem}\label{sec:EMOP}

\subsection{The equilibrium measure}\label{ssec:EMOPem}
We need to clarify our assumptions on the external potential $V$ as it pertains to the minimizer $\mu_V$ for the energy $\Ec$ defined in \eqref{eq:Edef}. Throughout this section, we assume that $\g$ is of the form \eqref{eq:gmod}.

Our basic assumptions on $V$ to ensure the existence {and uniqueness} of the equilibrium measure $\mu_V$ are the following:
\begin{enumerate}[(i)]
\item\label{VeBA1} $V$ is lower semicontinuous (l.s.c.) and bounded below,
\item\label{VeBA2} $\{x\in\R^\d: V(x)<\infty\}$ has positive $\g$-capacity,
\item\label{VeBA3} {$\lim_{|x|\rightarrow\infty} V(x) +\g(x) = \infty$.}
\end{enumerate}

Under assumptions \ref{VeBA1}-\ref{VeBA3}, Frostman's theorem \cite{Frostman1935}  guarantees the existence of a minimizer to \eqref{eq:Edef} satisfying certain properties. {We refer to \cite[Chapter 2]{SerfatyLN} for a proof under these assumptions and for our class of $\g$'s. }

\begin{prop}\label{prop:Frost}
Assuming $V$ satisfies \ref{VeBA1}-\ref{VeBA3}, there exists a unique minimizer of $\Ec$ in $\P(\R^\d)$, denoted by $\mu_V$, with $\Ec(\mu_V)$ finite. Moreover, $\mu_V$ has the following properties:
\begin{itemize}
\item $\Sigma\coloneqq\supp \mu_V$ is bounded and has positive $\g$-capacity;
\item if we define the $\g$-potential $h^{\mu_V} \coloneqq \g\ast\mu_V$, then there exists a constant $c>0$ (called the Robin constant), such that
\begin{equation} \label{EulerLagrange}
\begin{cases}
h^{\mu_V} + V \geq c \quad \text{quasi-everywhere (q.e.)},\footnotemark \\
h^{\mu_V} + V = c \quad \text{q.e. on} \ \Sigma.
\end{cases}
\end{equation}
\footnotetext{Quasi-everywhere means the exceptional set has zero $\g$-capacity, which is stronger than zero Lebesgue measure. See \cite[Section II.1]{Landkof1972}.}
\end{itemize}
\end{prop}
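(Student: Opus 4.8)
The plan is to follow the classical Frostman approach, working throughout with the energy functional $\Ec$ on $\P(\R^\d)$ and exploiting the strict convexity built into the quadratic interaction term. First I would establish existence of a minimizer by the direct method: using assumption \ref{VeBA3}, one shows that $\Ec$ has compact sublevel sets for the topology of weak-* convergence (against bounded continuous functions), after a standard tightness argument based on the fact that $V(x)+\g(x)\to\infty$, which forces any low-energy measure to put negligible mass near infinity; lower semicontinuity of $\Ec$ along weak-* convergent sequences follows from l.s.c.\ of $V$ and $\g$ (both are bounded below and l.s.c., so one truncates $\g$ from above, applies the portmanteau lemma, and lets the truncation level go to infinity by monotone convergence). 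Assumption \ref{VeBA2} guarantees $\Ec$ is finite somewhere (one can build a compactly supported measure of finite energy on the finiteness set of $V$ using positivity of $\g$-capacity), so the infimum is finite and attained; call a minimizer $\mu_V$.

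For uniqueness, the key point is that the bilinear form $(\mu,\nu)\mapsto \int\int \g(x-y)\,d\mu\,d\nu$ is strictly positive definite on signed measures of total mass zero and finite energy — this is the standard energy principle for Riesz/log kernels, which follows from the Fourier representation $\g = \cd^{-1}(-\Delta)^{-(\d-\s)/2}\delta_0$ with a nonnegative (and strictly positive) Fourier symbol. If $\mu_1,\mu_2$ were two minimizers, then by the parallelogram-type identity $\Ec(\tfrac{\mu_1+\mu_2}{2}) = \tfrac12\Ec(\mu_1)+\tfrac12\Ec(\mu_2) - \tfrac14 \iint\g\,d(\mu_1-\mu_2)^{\otimes 2}$, and since $\mu_1-\mu_2$ has zero mass the last term is strictly negative unless $\mu_1=\mu_2$, contradicting minimality. (One must first check that the energy of $\mu_1-\mu_2$ is well-defined and finite, which is where one uses that both minimizers have finite energy.)

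Next I would derive the Euler--Lagrange conditions \eqref{EulerLagrange} by perturbing $\mu_V$ within $\P(\R^\d)$. For any probability measure $\nu$ of finite energy, the function $t\mapsto \Ec((1-t)\mu_V + t\nu)$ is minimized at $t=0$ on $[0,1]$, and computing the right derivative at $0$ gives $\int (h^{\mu_V}+V)\,d\nu \ge \int(h^{\mu_V}+V)\,d\mu_V$. Taking $\nu$ to be Dirac-like (mollified point masses) yields $h^{\mu_V}+V \ge c$ q.e.\ where $c \coloneqq \int (h^{\mu_V}+V)\,d\mu_V = 2\Ec(\mu_V) - \int V\,d\mu_V$; the ``quasi-everywhere'' refinement (rather than just a.e.) comes from the fact that a set of positive capacity supports a measure of finite energy, so the inequality must hold off a polar set. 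For the reverse inequality on $\Sigma$, one uses that the inequality $h^{\mu_V}+V\ge c$ holds $\mu_V$-a.e.\ with equality after integration against $\mu_V$, forcing equality $\mu_V$-a.e.; then a standard argument (continuity properties of the potential $h^{\mu_V}$ of a finite-energy measure, together with the maximum principle for the Riesz potential) upgrades this to equality q.e.\ on $\Sigma$. Boundedness of $\Sigma$ is immediate from assumption \ref{VeBA3} and the q.e.\ inequality $h^{\mu_V}+V\ge c$: outside a large ball the left side exceeds $c$ strictly, ruling out those points from $\supp\mu_V$; positive $\g$-capacity of $\Sigma$ follows since $\mu_V$ itself is a probability measure of finite energy supported there.

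The main obstacle, in my estimation, is the careful handling of capacity versus Lebesgue measure throughout — in particular upgrading ``a.e.''\ statements to ``q.e.''\ statements in the Euler--Lagrange conditions, which requires the fine potential-theoretic machinery (polar sets, continuity of finite-energy potentials outside polar sets, the domination/maximum principle for Riesz kernels). For the log case $\s=0$ there are also the usual sign subtleties: $\g$ is not bounded below globally, so assumption \ref{VeBA3} in the form $V+\g\to\infty$ is doing real work, and one may need to renormalize (e.g.\ restrict attention to a compact set first, or add a constant) before the direct-method argument applies cleanly. Since the proposition explicitly defers to \cite[Chapter 2]{SerfatyLN} for the proof in exactly this class of kernels, I would cite that reference for the delicate capacity-theoretic steps and only sketch the convexity/uniqueness argument in detail.
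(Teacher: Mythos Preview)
The paper does not actually prove this proposition: it states the result and defers entirely to the literature, citing Frostman \cite{Frostman1935} and \cite[Chapter 2]{SerfatyLN} for a proof under these assumptions and for this class of kernels. Your proposal correctly recognizes this and goes further by sketching the standard Frostman argument (direct method for existence, strict positive-definiteness of the interaction for uniqueness, first-variation for the Euler--Lagrange conditions), which is exactly the approach taken in the cited reference; so your treatment is consistent with, and more detailed than, what the paper itself provides.
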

{Remark that the second item uniquely characterizes the equilibrium measure}.

If we define the function
\begin{equation}\label{eq:zetadef}
\zeta \coloneqq h^{\mu_V} +V-c,
\end{equation}
then \cref{prop:Frost} implies that $\zeta\geq 0$ on $\R^\d$. For our purposes, we need to assume that $\mu_V$ is absolutely continuous with respect to the Lebesgue measure and has a sufficiently regular density (at least {bounded} in $\Sigma$), which we also denote by $\mu_V$ with an abuse of notation. {We also need to assume that $\Sigma$ equals the closure of its interior, i.e. $\Sigma=\ol{\overset\circ\Sigma}$, and that the boundary $\p\Sigma$ is sufficiently smooth, at least $C^{1,1}$.}

Let us comment on examples of possible external potentials $V$. In the case where $V(x)=\frac12|x|^2$ and $\g$ is the Coulomb potential, it is well-known that $\mu_V$ is a constant multiple of the characteristic function of a ball centered at the origin. More generally, if $V$ is still quadratic and $\g$ is of the form \eqref{eq:gmod}, then $\mu_V$ is the so-called fractional Barenblatt profile \cite[Theorem 2.2]{BIK2015} and \cite[Theorem 3.1]{CV2011}  which generalizes the classical Barenblatt profile for self-similar solutions of the porous medium equation. 
 Let us also mention that if $V$ is radial, then $\Sigma$ is always a ball. Furthermore, given any probability measure $\mu_*$ with finite $\g$-energy, one can choose the confinement
\begin{align}
V \coloneqq -\g\ast\mu_* + \frac12\int_{(\R^\d)^2}\g(x-y)d\mu_*(x)d\mu_*(y),
\end{align}
so that the associated equilibrium measure $\mu_V = \mu_*$. 
{Remark that for this choice of $V$, the energy \eqref{eq:Edef} becomes the \emph{maximum mean discrepancy (MMD)} widely used in statistics and machine learning, e.g. \cite{GBRSS2012}.}

\subsection{The equilibrium measure as a solution to the obstacle problem}\label{ssec:EMOPop}

As {emphasized} in \cite[Section 2.5]{Serfaty2015CGL}  (see also \cite{PS2017,CDM2016} for the fractional case), the minimization problem for $\Ec$ is intimately connected to an obstacle problem for the fractional Laplacian: for $s\in (0,1]$, given an obstacle $\varphi:\R^\d\rightarrow\R$, find a function $w:\R^\d\rightarrow\R$ such that
\begin{equation}\label{eq:OPfl}
\min\{(-\D)^{s}w, w-\varphi\} = 0,
\end{equation}
which, for instance, is reviewed in \cite{Caffarelli1998} for the classical case $s=1$ and has been studied in \cite{Silvestre2007,ACS2008, CSS2008, KPS2015, DsS2016, BFRo2018, JN2017, KRS2019} for the fractional case $s \in (0,1)$. The set $\{w=\varphi\}$ is called the \emph{coincidence set} or \emph{contact set}, which is an unknown, and its boundary $\p\{w=\varphi\}$ is called the \emph{free boundary}. More precisely, if $\mu_V$ minimizes the energy $\Ec$, then it follows from \cref{prop:Frost} that $h^{\mu_V}$ satisfies \eqref{eq:OPfl} with $s = \frac{\d-\s}{2}$ and obstacle $\varphi = c-V$. Moreover, $\{\zeta=0\}$ is the contact set. 

{We have $\Sigma \subset \{\zeta=0\}$}, but in general it is not true that $\{\zeta=0\} =\Sigma$,  the latter of which is called the \emph{droplet} \cite{HM2013}. To avoid this possible issue, we hypothesize that $\{\zeta=0\}=\Sigma$. A sufficient condition to ensure this equality is that  $\D\varphi <0$, equivalently $\D V>0$, in a neighborhood of $\{\zeta=0\}$. 

The connection between the minimization problem and the obstacle problem allows us to access regularity results for the latter, in particular those pertaining to the regularity of the free boundary {and the so-called ``lift-off" rate from the obstacle, i.e.~growth rate of $\zeta$ away from $\Sigma$. The crucial point for us is that  lift-off be sufficiently fast, which is the main reason for the assumption that $\Sigma= \{\zeta=0\}$ as well as  our other assumptions.}  

We now briefly review a few important facts, focusing on the fractional case $s \in (0,1)$. Additional discussion and results are deferred to \cref{sec:appOP}. 

As shown in \cite{CSS2008}, improving upon \cite{Silvestre2007}, the optimal regularity for the solution $w$ of \eqref{eq:OPfl} is $C^{1,s}$, under the assumption that $\varphi$ is sufficiently regular (e.g.~$C^{3,\al}$). Consequently, $\nabla\zeta(x) = 0$ for all $x\in\Sigma$.  \cite{CSS2008} additionally classifies free boundary points $x_0$ as regular points, if the blow-up at $x_0$ has $1+s$ homogeneity,\footnote{{Strictly speaking, \cite{CSS2008} did not show the convexity of blow-ups, which is an important detail that was later addressed in \cite{fRJ2021, CDV2022}.}} or singular points, if the contact set has zero density at $x_0$; and establishes $C^{1,\al}$ regularity at regular points. In principle, the union of regular points and singular points does not exhaust the free boundary; but \cite{BFRo2018} shows that such points are the only two possibilities, under the assumption that $\D\varphi\leq -c< 0$. \cite{JN2017, KRS2019} independently establish higher regularity of the free boundary near regular points {(see also \cite{ArO2020} for improvements and generalizations)}. Paraphrasing from the former work, if the obstacle $\varphi\in C^{m,\be}(\R^\d)$ for $m\geq 4$ and $\be\in (0,1)$ and $x_0\in \p\Sigma$ is a regular point of the free boundary, then $\p\Sigma$ is $C^{m-1,\al}$ in a neighborhood of $x_0$ for some $\al\in (0,1)$ depending $\d,s,m,\be$; in particular, if $\varphi\in C^\infty$, then the free boundary is $C^\infty$. In general, the free boundary may contain both regular and singular points, around the latter of which the free boundary is only $C^1$. We always assume in this work that this is not the case. The recent \cite{FRoS2020} proves for the classical obstacle problem that the absence of singular points is generic for $\d\leq 4$. 

\begin{remark}
For many results, the obstacle $\varphi$ and its derivatives up to certain order need to be bounded. This is obviously not the case if $\varphi = V-c$ for $V$ as above. However, our assumption that $\Sigma$ is bounded allows us to work with a local obstacle problem in a neighborhood of $\Sigma$. In which case, the needed boundedness for $\varphi$ and its derivatives is satisfied.
\end{remark}

The main application of this regularity theory for the obstacle problem comes in the form of the following lemma, which is a Riesz generalization of the Coulomb-specific result \cite[Lemma 3.2]{BCGM2015} (see also Lemma 3.1 in the cited work for the specific case of a quadratic potential). As discussed in the introduction, we shall need this lemma to estimate one of the terms appearing in the evolution equation satisfied by the total modulated energy $\Hr_N(Z_N^t,u^t)$. {The proof is deferred to the end of \cref{sec:appOP}.} 

\begin{lemma}\label{lem:ugradVext}
Let $\theta>2$ and suppose that $V\in C_{loc}^{\theta+\frac{\d-\s}{2}}$. There exists a constant $C>0$ depending on $\d,\s,V,\Sigma$ such that for any vector field $v:\R^\d\rightarrow\R^\d$  {satisfying the no-flux condition $v\cdot \nu =0 $ on $\partial \Sigma$ (where $\nu$ is the unit normal to $\partial \Sigma$)}, with $\supp v$ contained in a $2\diam(\Sigma)$-neighorhood of $\Sigma$, it holds that
\begin{equation}\label{eq:ugradzeta}
\left|v(x)\cdot\nabla\zeta(x)\right| \leq C\|v\|_{W^{1,\infty}}\zeta(x), \qquad \forall x\in\R^\d.
\end{equation}
\end{lemma}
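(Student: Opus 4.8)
\textbf{Proof proposal for Lemma \ref{lem:ugradVext}.}

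The plan is to reduce the estimate to a local statement near the free boundary $\p\Sigma$, since away from $\p\Sigma$ the inequality is either trivial (inside $\Sigma$, where $\zeta\equiv 0$ and $\nabla\zeta\equiv 0$, so both sides vanish) or follows from a crude bound (on the region $\{\zeta\ge\delta>0\}$ intersected with $\supp v$, where $\zeta$ is bounded below and $|\nabla\zeta|$ is bounded above by $C^{1,s}$-regularity of $w=h^{\mu_V}$, so $|v\cdot\nabla\zeta|\le C\le C\delta^{-1}\zeta$). Hence everything comes down to a neighborhood of a boundary point $x_0\in\p\Sigma$, and by the standing assumptions every such point is a regular point, so the free boundary is of class $C^{m-1,\al}$ (in particular $C^{1,1}$ suffices for us) in a neighborhood of $x_0$, and the lift-off rate is governed by the blow-up being $(1+s)$-homogeneous.

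The core of the argument is then a two-sided control of $\zeta$ near a regular free-boundary point in terms of the distance to $\Sigma$: I would invoke the nondegeneracy/growth estimates for the fractional obstacle problem (the ones to be established in \cref{sec:appOP}) to get, in a neighborhood $U$ of $x_0$,
\begin{equation}\label{eq:zetadist}
c_1\,\dist(x,\Sigma)^{1+s}\le \zeta(x)\le c_2\,\dist(x,\Sigma)^{1+s},\qquad x\in U,
\end{equation}
together with a matching gradient bound $|\nabla\zeta(x)|\le c_3\,\dist(x,\Sigma)^{s}$, where $s=\tfrac{\d-\s}{2}$. The upper bound on $\zeta$ and the gradient bound are standard consequences of $C^{1,s}$ regularity of $w$ combined with $\nabla\zeta=0$ on $\Sigma$; the \emph{lower} bound \eqref{eq:zetadist} is the nondegeneracy statement and is the delicate input, which is exactly why the hypothesis $\Sigma=\{\zeta=0\}$ and $\D V>0$ near $\p\Sigma$ (equivalently $\D\varphi\le-c<0$) are imposed — these are what rule out singular points and degenerate lift-off. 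Given \eqref{eq:zetadist}, if I only had the gradient bound I would get $|v(x)\cdot\nabla\zeta(x)|\le C\|v\|_{L^\infty}\dist(x,\Sigma)^s$, which is \emph{not} enough: $\dist^s$ is much larger than $\zeta\sim\dist^{1+s}$ near the boundary. This is where the no-flux condition enters and is the crux of the proof.

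The point is that $v\cdot\nabla\zeta$ must be estimated using the component of $v$ \emph{normal} to $\Sigma$, because $\nabla\zeta$, which vanishes on $\Sigma$, points essentially in the normal direction near $\p\Sigma$: writing $\nabla\zeta(x) = (\nabla\zeta(x)\cdot\nu(\pi(x)))\,\nu(\pi(x)) + (\text{tangential remainder})$, where $\pi(x)$ is the nearest point on $\p\Sigma$ and $\nu$ the outer unit normal, the $C^{1,s}$ regularity of $w$ and $C^{1,1}$ regularity of $\p\Sigma$ give that the tangential remainder of $\nabla\zeta$ is $O(\dist(x,\Sigma)^{1+s'})$ for a suitable $s'$, in any case controlled by $C\,\zeta(x)$ via \eqref{eq:zetadist}. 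Meanwhile $v\cdot\nu(\pi(x))$ is small: since $v$ is Lipschitz and $v\cdot\nu=0$ on $\p\Sigma$, we have $|v(x)\cdot\nu(\pi(x))|\le |v(x)-v(\pi(x))|\,|\nu(\pi(x))| + |v(\pi(x))\cdot\nu(\pi(x))| \le \|v\|_{W^{1,\infty}}\dist(x,\Sigma)$ (after also accounting, by the $C^{1,1}$ regularity of $\p\Sigma$, for the variation of $\nu$, which only improves the estimate). Combining, $|v(x)\cdot\nabla\zeta(x)| \le \|v\|_{W^{1,\infty}}\dist(x,\Sigma)\cdot C\dist(x,\Sigma)^{s} + \|v\|_{L^\infty}\cdot C\zeta(x) \le C\|v\|_{W^{1,\infty}}\big(\dist(x,\Sigma)^{1+s} + \zeta(x)\big)\le C\|v\|_{W^{1,\infty}}\zeta(x)$, using the lower bound in \eqref{eq:zetadist} in the last step. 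A covering argument over the compact free boundary, together with the trivial regions discussed above and the support hypothesis on $v$, then yields \eqref{eq:ugradzeta} on all of $\R^\d$.

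The main obstacle is securing the nondegenerate lower bound and the precise normal/tangential decomposition of $\nabla\zeta$ near the free boundary with quantitative constants — this is genuinely where the regularity theory of \cref{sec:appOP} is needed, and it is only available because we have excluded singular free-boundary points and imposed $\D V>0$ near $\p\Sigma$; the Lipschitz no-flux input and the covering argument are routine by comparison.
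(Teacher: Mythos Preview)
Your proposal is correct and follows essentially the same route as the paper: reduce to a neighborhood of the free boundary by compactness, decompose $v\cdot\nabla\zeta$ into normal and tangential contributions, use the no-flux condition plus Lipschitz continuity of $v$ to gain a factor of $d(x)$ on the normal part, and close with the nondegeneracy lower bound $\zeta\ge c\,d^{1+s}$. One refinement worth flagging: the tangential-derivative estimate $|\p_{\tau}\zeta|=O(d^{1+s})$ does \emph{not} follow from the optimal $C^{1,s}$ regularity of the solution alone (that only yields $|\nabla\zeta|\le C d^s$, which as you note is insufficient); the paper obtains it via a boundary Schauder estimate (\cref{lem:ArO2}) showing $\p_j\zeta/d^s\in C^{\theta-1}$ with $\theta>2$, which is precisely where the hypothesis $V\in C_{loc}^{\theta+(\d-\s)/2}$ with $\theta>2$ enters.
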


\subsection{Assumptions}\label{ssec:EMOPassmp}
 For convenience of subsequent referencing, we explicitly record below the assumptions made on $V,\mu_V$, and $\Sigma$ in the preceding paragraphs:
\begin{enumerate}[(H1)]
\item\label{assSigeq} $\Sigma$ coincides with the coincidence set and equals the closure of its interior, i.e. $\Sigma=\ol{\overset\circ\Sigma}=\{\zeta=0\}$;
\item\label{assSigcon} $\Sigma$ is connected;
\item\label{assSigrp} every $x_0\in \p\Sigma$ is a regular point;
\item\label{assVreg} $V$ is locally $C^{2+\frac{\d-\s}{2}+\epsilon}$ for $\epsilon>0$;
\item\label{assVLapl} $\Delta V>0$ in a neighborhood of $\{\zeta =0\}$.
\end{enumerate}
Further regularity assumptions on the density $\mu_V$ and its support $\Sigma$ will be made in \Cref{sec:Lake,sec:MP}.

\section{The modulated potential energy and commutators}\label{sec:ME}

We review basic properties of the modulated potential energy
\begin{align}\label{eq:MEdef}
\Fr_N(\ux_N,\mu) \coloneqq \frac12\int_{(\R^\d)^2 \setminus\triangle}\g(x-y)d\Big(\frac1N\sum_{i=1}^N\delta_{x_i}-\mu\Big)^{\otimes2}(x,y)
\end{align}
for (super-)Coulomb Riesz interactions \eqref{eq:gmod} and the sharp estimate for their first variation along a transport, which will be used in this paper as advertised in the introduction. In the remainder of this section, we assume that $\mu$ is distribution in $L^1(\R^\d) \cap L^\infty(\R^\d)$ such that $\int d\mu = 1$.\footnote{The $L^\infty$ assumption may be relaxed to an $L^p$ assumption for $p=p(\d,\s)$ at the cost of larger additive errors in the estimates. For instance, see \cite[Section 3]{Rosenzweig2022, Rosenzweig2022a}.} {If $\s\le 0$, we suppose further that $\int_{(\R^\d)^2}|\g(x-y)|d|\mu|^{\otimes 2}(x,y)<\infty$.} These conditions are sufficient to ensure that $\Fr_N(\XN,\mu)$ is well-defined. 
We emphasize that we do not require that $\mu\ge 0$, only that $\int d\mu =1$. This is an important {generalization}, originating in \cite{Rosenzweig2021ne}, because the distribution $\mu^t+\vep^2\Uu^t$ is not necessarily nonnegative, though always has mass one since $\Uu^t$ has zero mean.

The quantity \eqref{eq:MEdef} first appeared as a \emph{next-order electric energy} in \cite{SS2015,RS2016,PS2017} and was subsequently used in the dynamics context in \cite{Duerinckx2016,Serfaty2020} and following works---in the spirit of Brenier's total modulated energy \cite{Brenier2000}. In that context, the term ``modulated energy'' was used, instead of ``modulated potential energy,'' as there is no  modulated kinetic energy necessitating distinction. Concretely, {$\Fr_N$} is the total interaction of the system of $N$ discrete charges located at $\XN$ against a negative (neutralizing) background charge density $\mu$, with the infinite self-interaction of the points removed.

As shown in the aforementioned prior works, $\Fr_N$ is not necessarily positive; however, it effectively acts as a squared distance between the spatial empirical measure $\frac1N\sum_{i=1}^N \delta_{x_i}$ and $\mu$ and is bounded from below, as expressed by the next two lemmas. The logarithmic correction in the $\s=0$ case is present because $\Fr_N(\ux_N,\mu)$ is not quite the right quantity to consider, since it is not invariant under zooming into the microscale $(N\|\mu\|_{L^\infty})^{-1/\d}$. For a proof of \cref{lem:MPElb}, we refer to  \cite[Proposition 2.3]{RS2022}, which improves upon \cite[Corollary 3.4]{Serfaty2020}. For a proof of \cref{lem:MPEcoer}, we refer to \cite[Lemma 3.1, Corollary 3.3]{SerfatyLN}, which improves upon \cite[Proposition 3.6]{Serfaty2020}.

\begin{lemma}\label{lem:MPElb}
There exists a constant $C>0$, depending only on $\d,\s$, such that for any pairwise distinct configuration $\XN\in(\R^\d)^N$,
\begin{align}\label{eq:MPElb}
\Fr_N(\ux_N,\mu) + \frac{\log(N\|\mu\|_{L^\infty})}{2\d N}\indic_{\s=0} \ge -C\|\mu\|_{L^\infty}^{\frac{\s}{\d}}N^{\frac{\s}{\d}-1}.
\end{align}
\end{lemma}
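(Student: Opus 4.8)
\textbf{Proof proposal for \cref{lem:MPElb}.}

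The plan is to obtain the lower bound via the \emph{electric reformulation} of the modulated potential energy together with a renormalization/ball-construction argument, which is by now standard in the Riesz-gas literature. First I would extend the problem to the half-space $\R^{\d+1}$ (or to the bulk variable, depending on the sign of $\s$) by defining, for the configuration $\XN$ and the background $\mu$, the potential $h_N \coloneqq \g\ast\pa*{\frac1N\sum_{i=1}^N\delta_{x_i}-\mu}$, and its $s$-harmonic (Caffarelli--Silvestre) extension $\tl h_N$ to $\R^{\d+1}_+$ with the weight $|t|^{1-2s}$ where $s=\frac{\d-\s}{2}$. One then has the exact identity rewriting $\Fr_N(\XN,\mu)$, up to the self-energy of the points, as a renormalized Dirichlet-type energy $\frac{\cd}{2}\lim_{\eta\to 0}\pa*{\int |t|^{1-2s}|\nabla \tl h_{N,\eta}|^2 - N\cdot(\text{self-energy at scale }\eta)}$, where $h_{N,\eta}$ denotes the potential with each $\delta_{x_i}$ smeared onto a sphere of radius $\eta$. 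This is the content of \cite[Prop. 2.3]{RS2022} cited right after the statement; I would simply invoke that formula rather than rederive it.

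The key step is then the lower bound on this renormalized energy. Here I would use the ball-construction / mass-displacement machinery of Sandier--Serfaty and Petrache--Serfaty: one covers a suitable collection of the points by disjoint balls on which a controlled fraction of the Dirichlet energy is captured, and bounds from below the energy carried near each point by essentially the self-energy truncated at the nearest-neighbor distance. Summing over all points and optimizing the truncation scale $\eta$ — which one takes comparable to $(N\|\mu\|_{L^\infty})^{-1/\d}$, the natural interparticle scale dictated by the $L^\infty$ bound on the background density — produces precisely the error term $-C\|\mu\|_{L^\infty}^{\s/\d}N^{\s/\d-1}$, since the per-point contribution scales like $\g(\eta)/N \sim \eta^{-\s}/N$ (respectively $\log(1/\eta)/N$ when $\s=0$). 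The $\indic_{\s=0}$ logarithmic correction $\tfrac{\log(N\|\mu\|_{L^\infty})}{2\d N}$ is exactly the amount one must add back because, in the Coulomb-critical case $\s=0$, the truncated self-energy $\log(1/\eta)$ does not converge and must be subtracted at the chosen scale; this is why $\Fr_N$ alone is "not the right quantity" as remarked in the text.

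The main obstacle — the part requiring genuine work rather than bookkeeping — is controlling the interaction across different balls and making sure the background term $\mu$ (which is only $L^1\cap L^\infty$ and, crucially, \emph{not assumed nonnegative} in our application) does not spoil the positivity extracted from the ball construction. The non-negativity of $\mu$ is traditionally used to argue that spreading each Dirac onto a small sphere can only decrease the relevant energy; without it one must instead absorb the sign-indefinite part of $\mu$ into the additive error, using the $L^\infty$ bound to control how much energy the background can carry at the microscale. Since \cref{lem:MPElb} is quoted from \cite[Prop. 2.3]{RS2022}, in the paper itself the proof is a one-line citation; were I to reconstruct it, the above ball-construction-plus-renormalization scheme, with careful treatment of the signed background via the $L^\infty$ norm, is the route I would follow.
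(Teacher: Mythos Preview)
Your proposal is correct: the paper does not prove \cref{lem:MPElb} but simply cites \cite[Proposition 2.3]{RS2022} (improving on \cite[Corollary 3.4]{Serfaty2020}), exactly as you note. Your sketch of the underlying argument---electric reformulation via the Caffarelli--Silvestre extension, smearing the Diracs at the microscale $(N\|\mu\|_{L^\infty})^{-1/\d}$, and absorbing the signed part of $\mu$ into the additive error via the $L^\infty$ bound---is a faithful outline of that cited proof, so nothing further is needed.
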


\begin{lemma}\label{lem:MPEcoer}
There exists a constant $C>0$, depending only on $\d,\s$, such that for any test function $\phi$ and pairwise distinct configuration $\XN\in(\R^\d)^N$,
\begin{multline}
\Big|\int_{\R^\d}\phi \, d\Big(\frac1N\sum_{i=1}^N \delta_{x_i} - \mu\Big)\Big| \leq  C\|(-\Delta)^{\frac{\d-\s}{2}}\phi\|_{L^\infty} (N\|\mu\|_{L^\infty})^{\frac{\s}{\d}-1} \\
+ C\|\phi\|_{\dot{H}^{\frac{\d-\s}{2}}}\Big(\Fr_N(\ux_N,\mu) + \frac{\log(N\|\mu\|_{L^\infty})}{2\d N}\indic_{\s=0} + C\|\mu\|_{L^\infty}^{\frac\s\d}N^{\frac{\s}{\d}-1}\Big)^{1/2}.
\end{multline}
Consequently, if $\ka>\d-\s+\frac{\d}{2}$, then
\begin{align}
\|\frac1N\sum_{i=1}^N\delta_{x_i}-\mu\|_{H^{-\ka}} \le C(N\|\mu\|_{L^\infty})^{\frac{\s}{\d}-1}  + C \Big(\Fr_N(\ux_N,\mu) + \frac{\log(N\|\mu\|_{L^\infty})}{2\d N}\indic_{\s=0} + C\|\mu\|_{L^\infty}^{\frac\s\d} N^{\frac{\s}{\d}-1}\Big)^{1/2}.
\end{align}
\end{lemma}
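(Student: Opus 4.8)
\textbf{Proof proposal for \cref{lem:MPEcoer}.}

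The plan is to reduce the estimate on a generic test function $\phi$ to the electric-energy interpretation of $\Fr_N$. First I would write $\mathrm{fluct}_N \coloneqq \frac1N\sum_{i=1}^N\delta_{x_i}-\mu$ and regularize it at the microscale $\eta \coloneqq (N\|\mu\|_{L^\infty})^{-1/\d}$: replace each Dirac $\delta_{x_i}$ by a smeared charge $\delta_{x_i}^{(\eta)}$ (a uniform measure on a sphere of radius $\eta$, say), obtaining $\mathrm{fluct}_N^{(\eta)}$. The difference $\mathrm{fluct}_N - \mathrm{fluct}_N^{(\eta)}$ is a sum of $N$ dipole-like objects at scale $\eta$, so pairing it against $\phi$ costs at most $C\|(-\Delta)^{\frac{\d-\s}{2}}\phi\|_{L^\infty}$ times $N\cdot\eta^{\d-\s}\cdot\frac1N = (N\|\mu\|_{L^\infty})^{\frac{\s}{\d}-1}$, which is exactly the first term on the right-hand side; this uses that $\g$ is the kernel of $(-\Delta)^{-\frac{\d-\s}{2}}$ and that smearing changes the potential only inside $B(x_i,\eta)$ with a controlled profile.

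For the smeared part I would use Plancherel/duality for the homogeneous fractional Sobolev pairing: writing $\g\ast\mathrm{fluct}_N^{(\eta)}$ for the regularized potential, one has
\begin{align}
\Big|\int_{\R^\d}\phi\, d\mathrm{fluct}_N^{(\eta)}\Big| = \Big|\int_{\R^\d} (-\Delta)^{\frac{\d-\s}{4}}\phi \cdot (-\Delta)^{\frac{\d-\s}{4}}\big(\g\ast\mathrm{fluct}_N^{(\eta)}\big)\Big| \le \|\phi\|_{\dot H^{\frac{\d-\s}{2}}}\,\Big(\int_{\R^\d}\big|\nabla^{\frac{\d-\s}{2}}(\g\ast\mathrm{fluct}_N^{(\eta)})\big|^2\Big)^{1/2},
\end{align}
so that the task becomes bounding the regularized electric energy $\int |\nabla(\g\ast\mathrm{fluct}_N^{(\eta)})|^2$ (up to fractionalizing via the Caffarelli--Silvestre extension, which is the standard device turning $(-\Delta)^{\frac{\d-\s}{2}}$ into a local operator in one extra variable). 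The key input is the now-standard identity/inequality from \cite{PS2017,Serfaty2020,RS2022} that expresses this (truncated, renormalized) energy as $\Fr_N(\XN,\mu)$ plus the self-energy correction, yielding
\begin{align}
\int_{\R^\d}\big|\nabla^{\frac{\d-\s}{2}}(\g\ast\mathrm{fluct}_N^{(\eta)})\big|^2 \le C\Big(\Fr_N(\XN,\mu) + \tfrac{\log(N\|\mu\|_{L^\infty})}{2\d N}\indic_{\s=0} + C\|\mu\|_{L^\infty}^{\frac\s\d}N^{\frac\s\d-1}\Big),
\end{align}
where the additive terms absorb exactly the $N$-fold self-interaction removed from the diagonal (this is where \cref{lem:MPElb} is morally used, to guarantee the bracket is nonnegative so the square root makes sense). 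Combining the two contributions gives the displayed two-term bound.

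The $H^{-\ka}$ consequence then follows by duality: for $\ka>\d-\s+\frac\d2$ one has the Sobolev embedding $H^{\ka}\hookrightarrow \dot H^{\frac{\d-\s}{2}}$ together with $\|(-\Delta)^{\frac{\d-\s}{2}}\phi\|_{L^\infty}\lesssim \|\phi\|_{H^{\ka}}$ (since $\ka - (\d-\s) > \d/2$), so testing against $\phi$ with $\|\phi\|_{H^\ka}\le 1$ and taking the supremum yields the stated inequality. The main obstacle is the middle step: correctly carrying out the truncation/smearing of the Dirac masses and matching the renormalization constants so that the leftover self-energy is precisely the $\log N$ and $N^{\s/\d-1}$ terms appearing in the statement — but this is exactly the content of \cite[Proposition 2.3]{RS2022} and \cite[Lemma 3.1, Corollary 3.3]{SerfatyLN}, which we may invoke, so in practice the proof is a short duality argument on top of those.
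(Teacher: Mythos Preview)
Your proposal is correct and follows precisely the standard approach. Note that the paper itself does not prove this lemma but simply refers to \cite[Lemma 3.1, Corollary 3.3]{SerfatyLN} (improving on \cite[Proposition 3.6]{Serfaty2020}); what you have sketched---smearing the Diracs at the microscale $(N\|\mu\|_{L^\infty})^{-1/\d}$, controlling the smearing error by $\|(-\Delta)^{\frac{\d-\s}{2}}\phi\|_{L^\infty}$, and then using the $\dot H^{\frac{\d-\s}{2}}$ duality together with the electric-energy bound on the truncated potential---is exactly the argument carried out in those references.
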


The next proposition asserts a functional inequality that controls the first variation of the modulated potential energy along the transport map $\I+tv$ in terms of the modulated potential energy itself. We recall from the introduction that such a functional inequality is equivalent to a commutator estimate. The specific estimate below is taken from the authors' recent \cite[Theorem 1.1]{RS2022}, which improves upon earlier nonsharp estimates \cite{Duerinckx2016,Serfaty2020} and generalizes to the super-Coulomb Riesz case sharp Coulomb-specific estimates \cite{LS2018, Serfaty2023, Rosenzweig2021ne}. As remarked in the introduction, the sharpness of the $N^{\frac{\s}{\d}-1}$ additive error is crucial to allowing for the scaling relation \eqref{eq:SMFscl} between $\vep$ and $N$. 


\begin{prop}\label{prop:comm}
There exists a constant $C>0$ depending only on $\d,\s$ such that for any Lipschitz vector field $v:\R^\d\rightarrow\R^\d$ and any pairwise distinct configuration $\ux_N \in (\R^\d)^N$, it holds that\footnote{Here and throughout this paper, $\|\nab v\|_{L^\infty}$ denotes $\| |\nab v|_{\ell^2}\|_{L^\infty}$.}
\begin{multline}\label{main1}
\Big|\int_{(\R^\d)^2\setminus\triangle}(v(x)-v(y))\cdot\nabla\g(x-y)d\Big(\frac{1}{N}\sum_{i=1}^N\delta_{x_i} - \mu\Big)^{\otimes 2}(x,y)\Big| \\
\leq C\|\nabla v\|_{L^\infty}\Big( \Fr_N(\ux_N,\mu) - \frac{\log(N\|\mu\|_{L^\infty})}{2\d N}\indic_{\s=0} + C{\|\mu\|_{L^\infty}^{\frac\s\d}N^{\frac\s\d -1}} \Big).
\end{multline}
\end{prop}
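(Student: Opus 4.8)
\textbf{Proof strategy for Proposition~\ref{prop:comm}.} The plan is to reduce the commutator estimate to a statement about the \emph{electric field} generated by the signed measure $\mu_N - \mu$, where $\mu_N := \frac1N\sum_{i=1}^N\delta_{x_i}$, and then to control the transported energy by the modulated energy via a stress-tensor (divergence-structure) computation, with the diagonal excision handled by a truncation at an appropriately chosen scale. Concretely, I would first pass to the extension representation: since $\g$ is the fundamental solution of $(-\Delta)^{\frac{\d-\s}{2}}$, write $\g = $ (up to constants) the kernel obtained by a Caffarelli--Silvestre type extension to $\R^{\d+k}$ with $k = \s - \d + 2 \in (0,2]$, let $h := \g\ast(\mu_N-\mu)$ be the potential and $\nabla h$ its (extended) gradient, so that $\int (v(x)-v(y))\cdot\nabla\g(x-y)\,d(\mu_N-\mu)^{\otimes 2}$ becomes, after symmetrization, an integral of the form $\int (\mathrm{``}v\mathrm{"} \text{ acting as a vector field}) $ against a quadratic expression in $\nabla h$. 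The standard identity here is that this quantity equals $-\int |y|^{\al}\, (\text{something like}) \big( \tfrac12 |\nabla h|^2 \div v - \nabla h\cdot \nabla v\cdot\nabla h\big)$ plus error terms, i.e.\ the integral of the \emph{deformation of the stress-energy tensor} $[\nabla h\otimes\nabla h - \tfrac12|\nabla h|^2 \mathrm{Id}]$ against $\nabla v$.

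The second step is the truncation. Because $\mu_N$ contains Dirac masses, $\nabla h$ is not in $L^2$, so one replaces each point charge $\delta_{x_i}$ by its smeared-out version $\delta_{x_i}^{(\eta_i)}$ (uniform measure on a sphere of radius $\eta_i$, or the ``truncated'' potential $\g_{\eta_i} = \min(\g,\g(\eta_i))$), with $\eta_i$ chosen of the order of the minimal inter-particle distance near $x_i$, capped at the natural scale $(N\|\mu\|_{L^\infty})^{-1/\d}$. This is exactly the truncation apparatus developed in the modulated-energy literature (Serfaty, and the authors' \cite{RS2022}): one shows that the truncated modulated energy $\Fr_N^{(\eta)}$ differs from $\Fr_N$ by an additive error controlled by $\frac{\log(N\|\mu\|_{L^\infty})}{2\d N}\indic_{\s=0} + C\|\mu\|_{L^\infty}^{\s/\d}N^{\s/\d-1}$ — this is where the sharp $N^{\s/\d-1}$ rate enters and must be tracked carefully — and that the smeared electric field $\nabla h_\eta$ genuinely lies in the weighted space $L^2(|y|^\al)$ with $\int |y|^\al|\nabla h_\eta|^2 \sim \Fr_N^{(\eta)} + (\text{these same additive errors})$. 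One also needs to bound the commutator of the transport with the truncation, i.e.\ the error from $v$ moving the truncation radii $\eta_i$; these are lower-order and absorbed.

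The third step is to combine: after inserting the truncation, the main term is $\big|\int |y|^\al \big(\nabla h_\eta\otimes\nabla h_\eta - \tfrac12|\nabla h_\eta|^2\mathrm{Id}\big):\nabla v\big| \le \|\nabla v\|_{L^\infty}\int |y|^\al|\nabla h_\eta|^2 \lesssim \|\nabla v\|_{L^\infty}\big(\Fr_N^{(\eta)} + \text{errors}\big)$, and then one converts $\Fr_N^{(\eta)}$ back to $\Fr_N$ at the cost of the same errors, yielding \eqref{main1}. The residual terms — boundary/far-field contributions from the extension, the self-interaction terms created by the truncation (the ``$\mathsf{f}$-functions'' or near-diagonal integrals), and the commutator-of-truncation errors — must each be shown to be $O(\|\nabla v\|_{L^\infty}(N\|\mu\|_{L^\infty})^{\s/\d-1})$ or absorbable into $\|\nabla v\|_{L^\infty}\Fr_N^{(\eta)}$; this bookkeeping, and in particular extracting the \emph{sharp} constant-free $N^{\s/\d-1}$ rate (rather than a lossy power), is the main obstacle and is precisely the content of \cite[Theorem 1.1]{RS2022}, which we invoke. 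I would therefore present the proof as: (i) recall the extension and stress-tensor identity; (ii) recall the truncation estimates and the identity $\int|y|^\al|\nabla h_\eta|^2 = 2\Fr_N^{(\eta)} + \text{errors}$; (iii) apply Cauchy--Schwarz/H\"older with $\nabla v \in L^\infty$; (iv) collect errors — citing \cite{RS2022} for the quantitative truncation lemmas rather than reproving them here.
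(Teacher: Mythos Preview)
The paper does not prove this proposition at all: it is simply recorded as \cite[Theorem 1.1]{RS2022} and used as a black box. Your proposal goes further than the paper does, giving a correct high-level outline of the argument in \cite{RS2022} (extension/electric formulation, stress-energy tensor identity, truncation at scale $\sim (N\|\mu\|_{L^\infty})^{-1/\d}$, and sharp error bookkeeping), and you yourself note at the end that the quantitative truncation lemmas would be cited from \cite{RS2022} rather than reproved --- so in effect your ``proof'' and the paper's agree: both defer to \cite{RS2022}.
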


\section{The Euler/Lake equation }\label{sec:Lake}


In this section, we review some properties of solutions to the Lake equation \eqref{eq:Lake}. Well-posedness has been studied, for instance, in \cite{LOT1996phy,LOT1996,Oliver1997,LO1997, Huang2003, BM2006, LNP2014,Duerinckx2018,BJ2018,HLM2022, aTL2023}. For our purposes, we need existence and uniqueness of classical solutions in a bounded domain (corresponding to the interior $\overset\circ\Sigma$ of $\Sigma=\supp\mu_V$)  under minimal topological assumptions.  Namely, the Cauchy problem is
\begin{align}\label{eq:LakeCP}
\begin{cases}
\p_t u + \ga u + (u\cdot\nab) u  = -\nab p & \quad \text{in} \ \overset\circ\Sigma\\
\div(\mu_V u) = 0 & \quad \text{in} \ \overset\circ\Sigma\\
u \cdot \nu = 0 & \quad \text{on} \ \p\overset\circ\Sigma \\
u^0 = u^\circ & \quad \text{in} \ \overset\circ\Sigma
\end{cases}
\end{align}
Working in a bounded domain makes physical sense because taking the quasineutral limit means that the density of particles outside of $\Sigma$ is zero and  there is no current flux across the boundary of $\Sigma$.

{For the purposes of our main result \cref{thm:mainSMF} (more precisely, its rigorous version \cref{thm:mainSMFrig}), we need at least a solution $u\in L^\infty([0,T], W^{1,\infty}(\overset\circ\Sigma) \cap H^\sigma(\overset\circ\Sigma))$ of \eqref{eq:LakeCP} for large enough $\sigma\ge 0$. The existence and uniqueness of such a solution under the strong assumption $\inf_{\Sigma} \mu_V >0$ is shown in \cite[Theorem A.1]{BCGM2015}, which we quote below in \cref{prop:LakeWP}.} 

\begin{prop}\label{prop:LakeWP}
Let $\Omega\subset\R^\d$ be a bounded open set with $\p\Om$ of class $C^{\sigma}$ for integer $\sigma>\frac{\d+2}{2}$. Suppose that $\mu_V: \Om\rightarrow\R$ is in $H^{\sigma+1}$, such that $\inf_{\Om}\mu_V > 0$.\footnote{Note that our assumption $\sigma>\frac{\d+2}{2}$ \emph{a fortiori} implies that $\mu_V$ is H\"older continuous by Sobolev embedding. In particular, $\mu_V$ makes sense pointwise.} Given a vector field $u^\circ:\Om\rightarrow\R^\d$ such that $\div(\mu_V u^\circ) =0$ in $\Omega$ and satisfying the no-flux condition $u^\circ\cdot\nu=0$ on $\p\Om$, where $\nu$ is the unit outward normal field for $\p\Om$, and a friction coefficient $\ga\in\R$, there exists a $T>0$ and a solution $u\in L^\infty([0,T]; H^\sigma(\Om))$ of \eqref{eq:Lake} satisfying the no-flux condition for $t\in[0,T]$. Moreover,
\begin{equation}
\sup_{0\leq t\leq T} \pa*{\|u^t\|_{H^\sigma} + \|\p_t u^t\|_{H^{\sigma-1}} + \|\nabla p^t\|_{H^\sigma} + \|\p_t\nabla p^t\|_{H^{\sigma-1}}} \leq C<\infty,
\end{equation}
where $C$ depends only on $\d,\Sigma,\ga,T,\|u^\circ\|_{H^\sigma}$.
\end{prop}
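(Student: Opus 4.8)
\textbf{Proof plan for Proposition \ref{prop:LakeWP}.} The statement is essentially a local-in-time existence and uniqueness result for a quasilinear active-transport-type system with a nonlocal (elliptic) pressure coupling, so the natural strategy is an energy method in the Sobolev scale $H^\sigma(\Omega)$ combined with an elliptic estimate for the pressure. The plan is to first eliminate the pressure: applying $\div(\mu_V\,\cdot)$ to the momentum equation and using $\div(\mu_V u)=0$ gives the divergence-form elliptic equation $-\div(\mu_V\nabla p)=\div^2(\mu_V u^{\otimes 2})$ on $\Omega$, which must be supplemented with a Neumann-type boundary condition obtained by dotting the momentum equation with $\nu$ on $\partial\Omega$ and using $u\cdot\nu=0$ (so $\partial_t(u\cdot\nu)=0$ and the normal component of $u\cdot\nabla u+\nabla p$ must vanish on $\partial\Omega$). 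Since $\mu_V\ge \inf_\Omega\mu_V>0$ and $\mu_V\in H^{\sigma+1}\hookrightarrow C^{1,\alpha}$, this is a uniformly elliptic problem with smooth coefficients, and standard elliptic regularity on the $C^\sigma$ domain $\Omega$ yields $\|\nabla p\|_{H^\sigma}\lesssim \|\mu_V u^{\otimes 2}\|_{H^{\sigma}}\lesssim \|u\|_{H^\sigma}^2$ (using that $H^\sigma$ is an algebra for $\sigma>\d/2$), together with the analogous estimate for $\partial_t\nabla p$ once $\partial_t u\in H^{\sigma-1}$ is known.

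With the pressure controlled, I would set up the standard approximation scheme: mollify the equation (or use a Galerkin/Friedrichs truncation adapted to the no-flux boundary condition), solve the regularized ODE in a suitable function space, and derive a priori estimates uniform in the regularization parameter. The key a priori estimate is obtained by applying a tangential derivative operator $D^\beta$ with $|\beta|\le\sigma$ to the momentum equation, pairing with $D^\beta u$ in $L^2(\Omega)$, and using: (i) the commutator/Moser estimates $\|[D^\beta,u\cdot\nabla]u\|_{L^2}\lesssim \|u\|_{H^\sigma}^2$; (ii) the skew-symmetry of $u\cdot\nabla$ modulo lower-order terms, after integrating by parts and using $\div(\mu_V u)=0$ and $u\cdot\nu=0$ so the boundary terms vanish; (iii) the bound on $\|\nabla p\|_{H^\sigma}$ from the previous paragraph; (iv) the friction term contributes $-\ga\|u\|^2$-type terms that are harmless. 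This gives $\frac{d}{dt}\|u^t\|_{H^\sigma}^2 \le C(\|u^t\|_{H^\sigma})$, hence a uniform time $T>0$ and a uniform bound on $[0,T]$; passing to the limit in the regularization (with compactness from Aubin--Lions, using $\partial_t u\in H^{\sigma-1}$) produces a solution $u\in L^\infty([0,T];H^\sigma(\Omega))$, and one checks the no-flux condition is preserved along the scheme. Uniqueness follows by a standard energy estimate in $L^2$ for the difference of two solutions, controlling the pressure difference by the same elliptic estimate and closing a Gr\"onwall inequality.

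The main obstacle, and the reason one cites \cite[Theorem A.1]{BCGM2015} rather than reproving it, is handling the \emph{boundary} carefully: the no-flux condition $u\cdot\nu=0$ is only a boundary condition on one scalar component, and the pressure equation comes with a compatibility-type Neumann condition that must be consistently propagated by the approximation scheme, which requires either working with boundary-adapted (tangential) difference quotients or localizing and flattening the boundary via the $C^\sigma$ charts and carefully tracking the curvature/lower-order terms that appear. A secondary technical point is that $\mu_V$ being merely $H^{\sigma+1}$ (not smooth) means the Moser estimates must be applied with the sharp algebra and product inequalities to avoid losing derivatives on the coefficient; the hypothesis $\sigma>\frac{\d+2}{2}$ is exactly what makes $\mu_V\in H^{\sigma+1}\hookrightarrow C^{2,\alpha}$ and $u\in H^\sigma\hookrightarrow C^{1,\alpha}$, so all the nonlinear terms land in $H^\sigma$ without loss. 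Since all of this is by now classical for incompressible Euler-type systems in bounded domains, I would present it as a short verification of the hypotheses of \cite{BCGM2015} together with the observation that the weighted incompressibility $\div(\mu_V u)=0$ and the no-flux condition interact exactly as in the unweighted case once $\inf_\Omega\mu_V>0$.
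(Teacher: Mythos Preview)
The paper does not prove this proposition at all: it is explicitly quoted from \cite[Theorem A.1]{BCGM2015} and stated without proof, so there is no ``paper's own proof'' to compare against. Your plan is a reasonable sketch of the standard energy-method argument that underlies such results, and you yourself correctly note that one cites \cite{BCGM2015} rather than reproving it; in that sense your proposal already matches what the paper does, namely delegate the proof to the reference.
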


{
This assumption $\inf_{\Sigma} \mu_V >0$ is overly restrictive in the non-Coulomb case $\s\neq\d-2$. For example, the equilibrium measure for $V=|x|^2$ vanishes continuously on the boundary $\p\Sigma$, unlike the Coulomb case. Given $\mu_V$, we therefore take as an \emph{assumption} the existence of a solution $u\in L^\infty([0,T], W^{1,\infty}(\overset\circ\Sigma) \cap H^\sigma(\overset\circ\Sigma))$. Proving the well-posedness of the Cauchy problem without the assumption $\inf_{\Sigma} \mu_V >0$ is beyond the scope of this paper, and we refer, for instance, to \cite{BM2006, aTL2023} for some results in the $\d=2$ case. 

Since the particles at the microscopic level of \eqref{eq:NewODE} move through the whole space and are not confined to the support of the equilibrium measure, it is convenient to regard the solution $u$ to the Lake equation given by \cref{prop:LakeWP} as a vector field in $\R^\d$. A naive extension $\tl u \coloneqq u\indic_{\Omega}$, however, does not preserve the regularity of $u$ nor the divergence-free condition. As partially sketched in \cite[Remark B.2]{BCGM2015} (see \cite[Lemma B.1]{BCGM2015} for when $\Omega$ is explicitly a ball, or more generally, an ellipsoid), it is possible to find a compactly supported extension $\tl u$ with the same regularity as the solution $u$ and satisfying $\div(\mu_V\tl u)=0$ {in $\R^\d$ in the sense of distributions.} We present the details for this construction.

Let $\Omega$ be a bounded, Lipschitz domain and $v\in W^{m,\infty}(\Omega) \cap H^\sigma(\Omega)$ for some $m\ge 0$ and $\sigma\ge 0$. By Stein's extension theorem \cite[Theorem 5, VI.3]{Stein1970},\footnote{Strictly speaking, Stein's theorem only implies that there exists a bounded linear extension operator $\mathfrak{G}: W^{\sigma,p}(\Om)\rightarrow W^{\sigma,p}(\R^\d)$ for integer $\sigma\geq 0$. But the boundedness of the operator on fractional Sobolev spaces follows from the theory of complex interpolation (e.g. see \cite[Theorem 4.1.2, Theorem 6.4.5]{BL1976}).} there exists a vector field $\tl{v} \in W^{m,\infty}(\R^\d) \cap H^\sigma(\R^\d)$ such that
\begin{align}
\|\tl{v}\|_{W^{m,\infty}(\R^\d)} &\le C \|v\|_{W^{m,\infty}(\Omega)}, \\
\|\tl{v}\|_{H^\sigma(\R^\d)} &\le C\|v\|_{H^\sigma(\Omega)},
\end{align}
where $C>0$ depends only $\d,m,\sigma,\Omega$. Let $\mathcal{O}$ be an open neighborhood of $\bar\Omega$. Then we can we find a $C^\infty$ bump function $\chi$ such that $\chi\equiv 1$ on $\bar\Omega$ and $\supp\chi\subset \mathcal{O}$. Multiplying $\tl{v}$ by $\chi$, we may assume without loss of generality that $\tl{v}$ is supported in $\mathcal{O}$.

We apply the preceding result with $\Omega = \overset\circ\Sigma$. If $\div(\mu_V v) = 0$ in $\Omega$ and $v$ satisfies the no-flux condition, then we claim that $\div(\mu_V\tl{v}) = 0$ in $\R^\d$ in the distributional sense. Indeed, given any test function $\varphi$, it follows from the definition of the distributional derivative and integration by parts that
\begin{align}
-\int_{\R^\d}\varphi \div(\mu_V\tl{v}) = \int_{\R^\d}\nab \varphi\cdot \tl{v}\mu_V  = \int_{\overset\circ\Sigma}\nab\varphi\cdot v\mu_V = \int_{\p\overset\circ\Sigma}\varphi (v\cdot\nu)\mu_V - \int_{\overset\circ\Sigma}\varphi\div(\mu_V v) = 0.
\end{align}

Given a solution $u\in L^\infty([0,T], W^{m,\infty}(\overset\circ\Sigma)\cap H^\sigma(\overset\circ\Sigma))$ to \eqref{eq:LakeCP}, we can apply the preceding lemma pointwise in $t$ with $v=u^t$ to obtain the following proposition.

\begin{prop}\label{lem:VFext}
Suppose that $\overset\circ\Sigma$ is a Lipschitz domain and $u \in L^\infty([0,T], W^{m,\infty}(\overset\circ\Sigma)\cap H^\sigma(\overset\circ\Sigma))$ for $m,\sigma\ge 0$. Given any open neighborhood of $\mathcal{O}\supset \Sigma$, there exists $\tl{u}\in L^\infty([0,T], W^{m,\infty}(\R^\d) \cap H^\sigma(\R^\d))$ such that for every $t\ge 0$, $\tl{u}^t$ is compactly supported in $\mathcal{O}$, $\tl{u}^t = u^t$ in $\overset\circ\Sigma$, $\tl{u}^t\cdot \nu = 0$ on $\p\overset\circ\Sigma$, and $\div(\mu_V\tl{u}^t) = 0$ in $\R^\d$. Moreover,
\begin{align}
\|\tl{u}\|_{L^\infty([0,T], W^{m,\infty}(\R^\d))} &\le C \|u\|_{L^\infty([0,T], W^{m,\infty}(\overset\circ\Sigma))}, \\
\|\tl{u}\|_{L^\infty([0,T], H^\sigma(\R^\d))} &\le C \|u\|_{L^\infty([0,T], H^\sigma(\overset\circ\Sigma))},
\end{align}
where $C>0$ depends only on $\d,m,\sigma,\overset\circ\Sigma$. 
\end{prop}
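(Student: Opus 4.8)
\textbf{Proof proposal for Proposition \ref{lem:VFext}.}
The plan is to assemble the proposition from the two ingredients already developed in the paragraphs preceding it: Stein's extension theorem (combined with complex interpolation for the fractional range) to produce a regularity-preserving extension, a smooth cutoff to make it compactly supported in $\mathcal{O}$, and the integration-by-parts computation to verify that the divergence-free condition for the weighted field $\mu_V \tl u$ is preserved across $\p\overset\circ\Sigma$. Since the statement is phrased in a time-dependent way but all the operations are applied pointwise in $t$, the only genuinely new point is to check that this pointwise construction can be done measurably/uniformly in $t$ so that it lands in $L^\infty([0,T], \cdot)$ rather than merely giving, for each fixed $t$, an extension.

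First I would fix notation: let $\mathfrak{G}$ be the Stein extension operator for the Lipschitz domain $\overset\circ\Sigma$, which is a \emph{single fixed bounded linear operator} $W^{\sigma,p}(\overset\circ\Sigma)\to W^{\sigma,p}(\R^\d)$ simultaneously for all relevant integer orders, hence (by complex interpolation, as noted in the footnote) bounded on the fractional scale $H^\sigma$ and on $W^{m,\infty}$ as well. Because $\mathfrak{G}$ is the \emph{same} operator for every $t$, applying it to the curve $t\mapsto u^t$ automatically produces a curve $t\mapsto \mathfrak{G}u^t$ that is (Bochner) measurable and satisfies the pointwise bounds $\|\mathfrak{G}u^t\|_{W^{m,\infty}(\R^\d)}\le C\|u^t\|_{W^{m,\infty}(\overset\circ\Sigma)}$ and $\|\mathfrak{G}u^t\|_{H^\sigma(\R^\d)}\le C\|u^t\|_{H^\sigma(\overset\circ\Sigma)}$ with $C$ independent of $t$; taking the essential supremum over $t\in[0,T]$ gives exactly the two displayed norm inequalities. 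Then multiply by a fixed $C^\infty$ bump $\chi$ with $\chi\equiv 1$ on $\bar{\overset\circ\Sigma}$ (equivalently on $\Sigma$, since $\Sigma=\ol{\overset\circ\Sigma}$) and $\supp\chi\subset\mathcal{O}$, and set $\tl u^t \coloneqq \chi\,\mathfrak{G}u^t$. Multiplication by the fixed function $\chi$ is bounded on both $W^{m,\infty}(\R^\d)$ and $H^\sigma(\R^\d)$, so the bounds and the $L^\infty$-in-time regularity survive; also $\tl u^t = u^t$ on $\overset\circ\Sigma$ because $\chi\equiv 1$ there and $\mathfrak{G}$ is an extension, and $\supp\tl u^t\subset\mathcal{O}$.

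Next I would verify the boundary and divergence conditions pointwise in $t$. Since $\tl u^t$ restricted to $\overset\circ\Sigma$ equals $u^t$, which by hypothesis satisfies $u^t\cdot\nu=0$ on $\p\overset\circ\Sigma$ (this trace makes sense as $u^t\in H^\sigma$ with $\sigma\ge 0$ large enough, or directly since $u^t\in W^{m,\infty}$ up to the boundary for the relevant $m$), the trace of $\tl u^t$ from inside $\overset\circ\Sigma$ is normal-free. The identity $\div(\mu_V\tl u^t)=0$ in $\R^\d$ in the distributional sense is then exactly the integration-by-parts computation displayed just above the proposition: for any test function $\varphi$,
\begin{align*}
-\int_{\R^\d}\varphi\,\div(\mu_V\tl u^t) = \int_{\R^\d}\nab\varphi\cdot\tl u^t\,\mu_V = \int_{\overset\circ\Sigma}\nab\varphi\cdot u^t\,\mu_V = \int_{\p\overset\circ\Sigma}\varphi\,(u^t\cdot\nu)\,\mu_V - \int_{\overset\circ\Sigma}\varphi\,\div(\mu_V u^t) = 0,
\end{align*}
where the second equality uses that $\mu_V$ is supported in $\Sigma=\ol{\overset\circ\Sigma}$, the third is the divergence theorem on the Lipschitz domain $\overset\circ\Sigma$ (legitimate since $\mu_V u^t\in W^{1,1}$ locally, using $\mu_V\in H^{\sigma+1}$ or at least $\mu_V$ bounded and Lipschitz-enough near $\p\Sigma$ as assumed), and the last uses the two hypotheses on $u^t$. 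This holds for a.e.\ $t$, which is all that is required. Finally I would collect: $\tl u$ so constructed lies in $L^\infty([0,T], W^{m,\infty}(\R^\d)\cap H^\sigma(\R^\d))$ with the stated bounds, is compactly supported in $\mathcal{O}$, agrees with $u$ on $\overset\circ\Sigma$, is normal-free on $\p\overset\circ\Sigma$, and is $\mu_V$-divergence-free on $\R^\d$, completing the proof.

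I expect the only mild obstacle to be bookkeeping rather than mathematics: making sure the extension operator is applied as one fixed operator (so measurability in $t$ and the uniform-in-$t$ constants come for free), and being slightly careful that the complex-interpolation step genuinely covers the needed fractional orders $\sigma$ and the $W^{m,\infty}$ endpoint uniformly — but this is precisely what the footnote citing \cite[Theorem 4.1.2, Theorem 6.4.5]{BL1976} provides, and Stein's operator is known to be bounded on $C^{m}$/Lipschitz scales as well, so no new estimate is needed.
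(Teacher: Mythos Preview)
Your proposal is correct and follows essentially the same approach as the paper: Stein's extension operator followed by multiplication by a smooth cutoff, then the identical integration-by-parts computation (using $\supp\mu_V\subset\Sigma$, the no-flux condition, and $\div(\mu_V u^t)=0$ in $\overset\circ\Sigma$) to verify $\div(\mu_V\tl u^t)=0$ distributionally in $\R^\d$, applied pointwise in $t$. Your additional care about using a single fixed extension operator so that measurability and uniform-in-$t$ bounds are automatic is a reasonable elaboration of what the paper simply phrases as ``apply the preceding lemma pointwise in $t$.''
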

}

\begin{remark}\label{rem:ptuext}
We can deduce the regularity of the time derivatives of the extension, that is $\p_t^k\tl u$ for $k\ge 1$. Indeed, suppose that $\p_t^k u \in L^\infty([0,T], W^{m-k,\infty}(\overset\circ\Sigma)\cap  H^{\sigma -k}(\overset\circ\Sigma))$. 
Now since the difference quotients $\frac{\p_t^{k-1}u(t+h) - \p_t^{k-1}u(t)}{h}$ converge, as $h\rightarrow 0$, to $\p_t^k u$ in $H^{\sigma-k}(\overset\circ\Sigma)$ (resp. $W^{m-k,\infty}(\overset\circ\Sigma)$), it follows from the continuity of the linear map $u\mapsto \tl u$ that
\begin{equation}
\p_t\widetilde{\p_t^{k-1} u}(t)= \lim_{h\rightarrow 0} \frac{\widetilde{\p_t^{k-1} u}(t+h) - \widetilde{\p_t^{k-1} u}(t)}{h} \quad \text{exists in} \quad H^{\sigma-k}(\R^\d) \text{ and equals $\widetilde{\p_t^k u}(t)$},
\end{equation}
where $\widetilde{f}$ denotes the extension operator constructed above applied to $f$. By induction on $k$, it follows that $\p_t^k\tl{u} = \widetilde{\p_t^k u}$.  Moreover,
\begin{align}
\|\p_t^k\tl u^t\|_{H^{\sigma-k}(\R^\d)} &\leq C\|\p_t^k u^t\|_{H^{\sigma-k}(\overset\circ\Sigma)},\\
\|\p_t^k \tl{u}^t\|_{W^{m-k,\infty}(\R^\d)} &\le C\|\p_t^k u^t\|_{W^{m-k,\infty}(\overset\circ\Sigma)},
\end{align}
where the constant $C>0$ is independent of $t$. Thus, $\p_t^k\tl u\in L^\infty([0,T], W^{m-k,\infty}(\R^\d)\cap H^{\sigma-k}(\R^\d))$.
This will be of use in \cref{sec:MP} when we incorporate the corrector $\Uu$, which is constructed from $\tl u, \p_t\tl u$, into our modulated energy. Note that $\tl u$ \emph{does not} satisfy equation \eqref{eq:Lake} in all of $\R^\d$, only in the open set $\overset\circ\Sigma$. Therefore, we cannot deduce that $\p_t\tl u\in L^\infty([0,T], W^{m-1,\infty}(\R^\d) \cap H^{\sigma-1}(\R^\d))$ simply using the equation \eqref{eq:Lake}.
\end{remark}

\section{Main proof}\label{sec:MP}
In this section, we prove our main result \cref{thm:mainSMF}.

\subsection{Statement of main result}\label{ssec:MPmr}

We present a rigorous, precise statement of our main result, previously stated informally in \cref{thm:mainSMF}.

For $m\ge 0$ and $\sigma\ge 0$, given a solution $u \in L^\infty([0,T], W^{m,\infty}(\overset\circ\Sigma) \cap H^\sigma(\overset\circ\Sigma))$ to the Lake equation \eqref{eq:Lake}, such that $\p_t^k u \in L^\infty([0,T], W^{m-k,\infty}(\overset\circ\Sigma)\cap H^{\sigma-k}(\overset\circ\Sigma))$, let $\tl u:\R^\d\rightarrow\R^\d$ be its extension in $L^\infty([0,T], W^{m,\infty}(\R^\d) \cap H^\sigma(\R^\d))$ such that $\p_t^k\tl{u} \in L^\infty([0,T], W^{m-k,\infty}(\R^\d)\cap H^{\sigma-k}(\R^\d))$, $\tl{u}^t\cdot\nu$ on $\p\Sigma^\circ$, and $\div(\mu_V\tl{u})=0$, as constructed in \cref{sec:Lake}.  Hereafter to, we work exclusively with the extension and therefore drop the $\tl{\cdot}$ superscript. Given a solution $\uz_N^t = (\ux_N^t,V_N^t)$ to the $N$-particle system \eqref{eq:NewODE}, we recall from \cref{ssec:introMR} the {total modulated energy}
\begin{equation}\label{eq:MEcon}
\Hr_{N}(\uz_N^t,u^t) \coloneqq \frac{1}{2N}\sum_{i=1}^N |v_i^t-u^t(x_i^t)|^2 + \frac{1}{\vep^2}\Fr_N(\ux_N^t,\mu_V+\vep^2\Uu^t)  + \frac{1}{\vep^2 N}\sum_{i=1}^N \zeta(x_i^t),
\end{equation}
where $\zeta$ is as defined in \eqref{eq:zetadef} and $\Uu:\R^\d\rightarrow\R$ is the corrector defined by
\begin{equation}\label{eq:Uudef}
\Uu \coloneqq (-\D)^{\frac{\d-2-\s}{2}}\div\pa*{\p_t u+\ga u + u\cdot\nabla u}.
\end{equation}
The motivation for the inclusion of this corrector will become clear during the computation behind the proof of \cref{lem:MEsmfID}.

\begin{remark}
Let us comment on the regularity of the corrector $\Uu$. Note that $-2 <\d-2-\s \le 0$ by assumption, so that $(-\D)^{\frac{\d-2-\s}{2}}\div$ is order $\d-1-\s$,  which acts like a fractional derivative  if $\d-2\leq \s<\d-1$, is of order zero if $\s=\d-1$, and is smoothing if $\d-1<\s<\d$. By \cref{lem:VFext} and \cref{rem:ptuext}, $(\p_t u+\ga u + u\cdot\nabla u)\in L^\infty([0,T], W^{m-1,\infty}(\R^\d) \cap H^{\sigma-1}(\R^\d))$, hence {$\Uu \in L^\infty([0,T], C^{m+\s-\d-\epsilon}(\R^\d)\cap  H^{\sigma+\s-\d}(\R^\d))$ for any $\epsilon>0$.\footnote{There is an $\epsilon$-loss of regularity because $\|(-\Delta)^{\frac{\ga}{2}}u\|_{L^\infty} < \infty$ does not imply $u\in \dot{C}^{\ga}$.} Appealing to \cref{rem:ptuext} again, $\p_t\Uu\in L^\infty([0,T], C^{m+\s-\d-1-\epsilon}(\R^\d)\cap H^{\sigma+\s-\d-1}(\R^\d))$.}
\end{remark}

 The precise version of \cref{thm:mainSMF} is the following result. {As explained in \cref{ssec:introMPf}, vanishing of the total modulated energy implies the weak-* convergence of the empirical measure to $\mu_V(x)\delta_{u^t(x)}$ as $N+\vep^{-1}\rightarrow \infty$.} 
 
\begin{thm}\label{thm:mainSMFrig}
{Suppose that $m>1+\d-\s$ and $\sigma\ge 2+\frac{\d-\s}{2}$}, and let $u$ be as above. Suppose that $\mu_V,\Sigma,V$ satisfy the assumptions \ref{VeBA1} - \ref{VeBA3} and \ref{assSigeq} - \ref{assVLapl} as listed in \cref{ssec:EMOPassmp}. For $0<\vep^2<\frac{\|\mu_V\|_{L^\infty}}{2\|\Uu\|_{L^\infty([0,T],L^\infty)}}$ and for $C>0$ sufficiently large, define the quantity
\begin{equation}\label{eq:Hscrdef}
\mathscr{H}_{N}^t \coloneqq \Hr_{N}(\uz_N^t,u^t) + \frac{\log(\|\mu_V+\vep^2\Uu^t\|_{L^\infty}N)}{2\vep^2 N\d}\indic_{\s=0} + \frac{C\|\mu_V+\vep^2\Uu^t\|_{L^\infty}^{\frac{\s}{\d}}N^{\frac{\s}{\d}-1}}{\vep^2}, 
\end{equation}
which is nonnegative,
\footnote{ Evidently, the modulated kinetic energy in the definition of \eqref{eq:MEcon} is nonnegative. The modulated potential energy is, in general, not nonnegative; however, the addition of the last two terms in \eqref{eq:Hscrdef} ensure nonnegativity of $\Fr_N(\ux_N^t,\mu_V +\vep^2\Uu^t)$ by \cref{lem:MPEcoer}.}
for a solution $Z_N^t$ of \eqref{eq:NewODE} and an admissible extension $u^t$ of a solution of \eqref{eq:Lake}.  Then there exist constants $C_1,C_2>0$ depending only on $\d,\s,\Sigma$, such that for every $t\in [0,T]$,
\begin{multline}\label{eq:SMFrig}
\mathscr{H}_{N}^t \leq e^{C_1\int_0^t \pa*{1+\|\nabla u^\tau\|_{L^\infty} + \|u^\tau\|_{W^{1,\infty}} + (\|\nabla u^\tau\|_{L^\infty}-\ga)_+}d\tau}\Bigg(\mathscr{H}_{N}^0 \\
+ \Big(\sup_{0\le \tau\le t} \frac{\log(\frac{\|\mu_V+\vep^2\Uu^\tau\|_{L^\infty}}{\|\mu_V+\vep^2\Uu^0\|_{L^\infty}})}{2\d N\vep^2}\indic_{\s=0}   + \frac{C(\|\mu_V+\vep^2\Uu^\tau\|_{L^\infty}^{\frac{\s}{\d}}- \|\mu_V+\vep^2\Uu^0\|_{L^\infty}^{\frac\s\d})N^{\frac\s\d - 1}}{\vep^2}\Big)\Big)_{+}\\
 +C_2\vep^2\int_0^t \pa*{\|\p_t\Uu^\tau\|_{\dot{H}^{\frac{\s+2-\d}{2}}} + \|u^\tau\|_{\dot{H}^{\frac{\s+4-\d}{2}}}\|\Uu^\tau\|_{L^\infty} + \|u^\tau\|_{L^\infty}\|\Uu^\tau\|_{\dot{H}^{\frac{\s+4-\d}{2}}} }^2 d\tau\\
+ C_2N^{\frac{\s}{\d}-1}\int_0^t \pa*{\|\p_t\Uu^\tau\|_{L^\infty}  + \|\div u\|_{L^\infty}\|\Uu\|_{L^\infty} + \|u\|_{L^\infty}\|\nab \Uu\|_{L^\infty}} d\tau \Bigg).
\end{multline}
\end{thm}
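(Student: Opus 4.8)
### Proof Proposal

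\textbf{Overall strategy.} The plan is to run a Gr\"onwall argument on the time-dependent quantity $\mathscr{H}_N^t$. First I would establish a differential identity for $\frac{d}{dt}\Hr_N(\uz_N^t, u^t)$ (this is the announced \cref{lem:MEsmfID}), by differentiating each of the three pieces — modulated kinetic energy, modulated potential energy, and the $\zeta$-term — along the flow of \eqref{eq:NewODE}. For the modulated kinetic energy $\frac{1}{2N}\sum_i |v_i - u(x_i)|^2$, the chain rule produces a term $-\frac{1}{N}\sum_i (v_i - u(x_i)) \cdot (\p_t u + (v_i\cdot\nabla)u)(x_i)$ together with the force term $-\frac{1}{\vep^2 N}\sum_i (v_i - u(x_i))\cdot(\nabla V(x_i) + \frac1N\sum_{j\ne i}\nabla\g(x_i - x_j))$ and a friction term. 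The key algebraic manipulation, as in \cite{Serfaty2020, HkI2021}, is to rewrite $(v_i\cdot\nabla)u = ((v_i - u)\cdot\nabla)u + (u\cdot\nabla)u$ and to use the Lake equation \eqref{eq:Lake} in the form $\p_t u + \ga u + (u\cdot\nabla)u = -\nabla p$ (valid in $\overset\circ\Sigma$, and it is here that the extension $\tl u$ and the corrector $\Uu$ enter, since $\Uu$ is designed precisely so that $(-\Delta)^{\frac{\d-\s}{2}}(\g\ast\vep^2\Uu) = \cd\vep^2\div(\p_t u + \ga u + u\cdot\nabla u)$ reproduces the pressure gradient against $\mu_V$; cf. \eqref{eq:Uudef}). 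Differentiating the modulated potential energy $\frac{1}{\vep^2}\Fr_N(\XN^t, \mu_V + \vep^2\Uu^t)$ yields two contributions: a "velocity" term of commutator type with vector field $u^t$ — which is exactly the left-hand side of \eqref{main1}/\eqref{eq:introcomm} — plus a term from $\p_t(\vep^2\Uu^t)$ acting on $\frac1N\sum\delta_{x_i} - \mu_V - \vep^2\Uu^t$, which is a "residual" term. Differentiating $\frac{1}{\vep^2 N}\sum_i \zeta(x_i^t)$ gives $\frac{1}{\vep^2 N}\sum_i v_i\cdot\nabla\zeta(x_i^t)$, which I split as $\frac{1}{\vep^2 N}\sum_i (v_i - u(x_i))\cdot\nabla\zeta(x_i) + \frac{1}{\vep^2 N}\sum_i u(x_i)\cdot\nabla\zeta(x_i)$.

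\textbf{Estimating the individual terms.} After collecting the identity, I would bound each piece. The commutator term from the modulated potential energy is controlled directly by \cref{prop:comm}: it is $\le C\|\nabla u^t\|_{L^\infty}(\Fr_N(\XN^t,\mu_V+\vep^2\Uu^t) - \frac{\log(\cdots)}{2\d N}\indic_{\s=0} + C\|\cdots\|_{L^\infty}^{\s/\d}N^{\s/\d-1})$, i.e. bounded by $\vep^2\|\nabla u\|_{L^\infty}\mathscr{H}_N^t$ up to the harmless additive term, and this is the step that forces the sharp scaling \eqref{eq:SMFscl}. The cross terms involving $(v_i - u(x_i))$ paired with bounded quantities ($\nabla u$, $\nabla\zeta$, the force error) are handled by Cauchy–Schwarz against the modulated kinetic energy, absorbing a factor of the kinetic energy and leaving lower-order errors. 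The crucial non-commutator term $\frac{1}{\vep^2 N}\sum_i u(x_i)\cdot\nabla\zeta(x_i)$ is bounded using \cref{lem:ugradVext} (the no-flux condition $u\cdot\nu = 0$ on $\p\Sigma$, guaranteed by the extension from \cref{sec:Lake}, is exactly what makes this lemma applicable): $|u(x_i)\cdot\nabla\zeta(x_i)| \le C\|u\|_{W^{1,\infty}}\zeta(x_i)$, so this term is $\le C\|u\|_{W^{1,\infty}}\cdot\frac{1}{\vep^2 N}\sum_i\zeta(x_i) \le C\|u\|_{W^{1,\infty}}\mathscr{H}_N^t$. The residual terms of the form $\int\phi\, d(\mu_N - \mu_V - \vep^2\Uu)$, with $\phi$ built from $u, \Uu, \p_t\Uu$, are bounded via the coercivity \cref{lem:MPEcoer}: the $\dot H^{(\d-\s)/2}$-norm part gives $\lesssim \|\phi\|_{\dot H^{(\d-\s)/2}}(\vep^2\mathscr{H}_N^t)^{1/2}$, which after multiplying by the $\vep^{-2}$ prefactor and Young's inequality contributes $\mathscr{H}_N^t$ plus a $\vep^2\|\phi\|_{\dot H^{(\d-\s)/2}}^2$ error, while the $(-\Delta)^{(\d-\s)/2}\phi$ part in $L^\infty$ gives an $N^{\s/\d-1}$ error; tracking $\phi = \p_t\Uu^t$ or $\phi$ quadratic in $u,\Uu$ produces exactly the integrands $\|\p_t\Uu\|_{\dot H^{(\s+2-\d)/2}}$, $\|u\|_{\dot H^{(\s+4-\d)/2}}\|\Uu\|_{L^\infty}$, etc., appearing in \eqref{eq:SMFrig}. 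Finally, the friction terms combine to give the $(\|\nabla u\|_{L^\infty} - \ga)_+$ contribution in the exponent after noting that the friction helps (or at worst is neutral) in the kinetic energy.

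\textbf{Closing the Gr\"onwall loop and expected obstacles.} Assembling all the bounds, one obtains $\frac{d}{dt}\mathscr{H}_N^t \le C_1 g(t)\mathscr{H}_N^t + (\text{explicit errors})$, where $g(t) = 1 + \|\nabla u^t\|_{L^\infty} + \|u^t\|_{W^{1,\infty}} + (\|\nabla u^t\|_{L^\infty}-\ga)_+$; the Gr\"onwall–Bellman lemma then yields \eqref{eq:SMFrig}, with the $\indic_{\s=0}$ and $N^{\s/\d-1}$ terms appearing because $\mathscr{H}_N^t$ differs from $\Hr_N$ by time-dependent shifts (the $\log\|\mu_V+\vep^2\Uu^t\|_{L^\infty}$ and $\|\mu_V+\vep^2\Uu^t\|_{L^\infty}^{\s/\d}$ factors move with $t$), whose variation must be carried along — hence the $\sup_{0\le\tau\le t}$ correction term in the statement. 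The main obstacle, I expect, is the careful bookkeeping in the differential identity: making the cancellation between the pressure in the Lake equation and the $\p_t(\vep^2\Uu)$ contribution to $\frac{d}{dt}\Fr_N$ precise (the announced identities \eqref{eq:dtHcon} and \eqref{eq:nabhUu}), and keeping the diagonal-excision in $\Fr_N$ under control when one differentiates — this is where the precise definition of $\Uu$ via $(-\Delta)^{(\d-2-\s)/2}\div$ is essential, and where one needs the regularity $\Uu \in L^\infty_t C^{m+\s-\d-\epsilon} \cap L^\infty_t H^{\sigma+\s-\d}$ (with $m > 1+\d-\s$, $\sigma \ge 2+\frac{\d-\s}{2}$) so that $\nabla\g\ast(\vep^2\Uu)$ and all the Sobolev norms appearing in the error terms are finite. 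A secondary subtlety is that $\tl u$ does not solve \eqref{eq:Lake} outside $\overset\circ\Sigma$, so every use of the equation must be localized to $\supp\mu_V$, which is legitimate precisely because $\mu_V$ and (after the fact) the bulk of $\mu_N^t$ are supported there — the $\zeta$-term quantifying the defect.
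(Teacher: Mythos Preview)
Your overall strategy and identification of which tool handles which term match the paper's proof: differential identity (\cref{lem:MEsmfID}), commutator via \cref{prop:comm}, the $u\cdot\nabla\zeta$ term via \cref{lem:ugradVext}, residuals $\int\phi\,d(\mu_N-\mu_V-\vep^2\Uu)$ via \cref{lem:MPEcoer} plus Young, then Gr\"onwall.

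There is one point where your description, taken literally, would fail. You say the cross terms ``$(v_i - u(x_i))$ paired with \ldots\ $\nabla\zeta$, the force error'' are handled by Cauchy--Schwarz. In fact all the $\vep^{-2}$-weighted cross terms cancel \emph{exactly}: the $\zeta$-derivative $\frac{1}{\vep^2 N}\sum_i v_i\cdot\nabla\zeta(x_i)$, the confinement piece $-\frac{1}{\vep^2 N}\sum_i(v_i-u(x_i))\cdot\nabla V(x_i)$ from the kinetic energy, and the term $-\frac{1}{\vep^2 N}\sum_i(v_i-u(x_i))\cdot\nabla h^{\mu_V}(x_i)$ from the potential energy combine (using $\nabla\zeta=\nabla h^{\mu_V}+\nabla V$) to leave only $\frac{1}{\vep^2 N}\sum_i u(x_i)\cdot\nabla\zeta(x_i)$. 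Cauchy--Schwarz on any one of these alone would be fatal: it produces $\vep^{-2}(\text{KE})^{1/2}(\frac{1}{N}\sum_i|\nabla\zeta(x_i)|^2)^{1/2}$, and since near the free boundary $\zeta\sim d^{1+s}$ while $|\nabla\zeta|\sim d^{s}$ with $s=\frac{\d-\s}{2}<1$, one does \emph{not} have $|\nabla\zeta|^2\lesssim\zeta$, so this cannot be absorbed into $\mathscr{H}_N$. Likewise the term $-\frac{1}{N}\sum_i(v_i-u(x_i))\cdot(\p_tu+\ga u+u\cdot\nabla u)(x_i)$ cancels exactly with $-\frac{1}{N}\sum_i(v_i-u(x_i))\cdot\nabla h^{\Uu}(x_i)$ via the global identity $-\nabla h^{\Uu}=\p_tu+\ga u+u\cdot\nabla u$ (this is \eqref{eq:nabhUu}; note it holds on all of $\R^\d$ by the definition of $\Uu$, so no localization to $\supp\mu_V$ is needed here). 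After these cancellations the only surviving $(v_i-u)$-term is the quadratic $-\frac{1}{N}\sum_i\nabla u(x_i):(v_i-u(x_i))^{\otimes 2}-\frac{\ga}{N}\sum_i|v_i-u(x_i)|^2$, bounded trivially. Once you track these cancellations, the rest of your outline goes through exactly as the paper does.
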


\begin{remark}
The reader may check that our regularity assumptions for $u,\mu_V$ imply that all the norms above are finite.
\end{remark}

\begin{remark}
At the risk of being too elementary, let us explain why \eqref{eq:SMFscl} implies that the right-hand side of \eqref{eq:SMFrig} vanishes as $\vep+N^{-1}\rightarrow 0$. It is clear that the last two lines vanish, as well as the second term on the second line. For the first term on the second line, we have by the triangle inequality and our smallness assumption for $\vep$ that
\begin{equation}
\frac{1}{3}\leq \frac{\|\mu_V+\vep^2\Uu^t\|_{L^\infty}}{\|\mu_V+\vep^2\Uu^0\|_{L^\infty}} \leq 3 \Longrightarrow \frac{\left|\log\pa*{\frac{\|\mu_V+\vep^2\Uu^t\|_{L^\infty}}{\|\mu_V+\vep^2\Uu^0\|_{L^\infty}}}\right|}{2\d N\vep^2} \leq \frac{\log 3}{2\d N\vep^2},
\end{equation}
which vanishes as $\vep+N^{-1}\rightarrow 0$ if \eqref{eq:SMFscl} holds. By similar reasoning, $\mathscr{H}_{N}^0$ vanishes as $\vep+N^{-1}\rightarrow 0$. 
\end{remark}

\subsection{Computation of the time-derivative of the total modulated energy}\label{ssec:MPcomp}
We turn to the proof of \cref{thm:mainSMFrig}. In this subsection, we compute the time derivative of $\Hr_{N}(\uz_N^t,u^t)$, the end result of our efforts stated in the following lemma (cf. \cite[Equation (2.7)]{HkI2021}).

\begin{lemma}\label{lem:MEsmfID}
If $\Uu^t  \coloneqq \cd(-\D)^{\frac{\d-2-\s}{2}}\div(\p_t u + \gamma u + u\cdot\nabla u)$, then
\begin{multline}\label{eq:dtHNid}
\frac{d}{dt}\Hr_{N}(\uz_N^t,u^t) = \frac{1}{N}\sum_{i=1}^N \pa*{v_i^t - u^t(x_i^t)}\cdot \pa*{(u^t(x_i^t)- v_i^t)\cdot\nabla u^t}(x_i^t)  -\frac{\ga}{N}\sum_{i=1}^N |v_i^t-u^t(x_i^t)|^2\\
+\frac{1}{2\vep^2}\int_{(\R^\d)^2\setminus\triangle} \pa*{u^t(x)-u^t(y)}\cdot\nabla\g(x-y)d\pa*{\mu_N^t-\mu_V-\vep^2\Uu^t}^{\otimes 2}(x,y) \\
+\frac{1}{\vep^2N}\sum_{i=1}^N u^t(x_i^t)\cdot \nabla\zeta(x_i^t)-\int_{\R^\d}\pa*{\div h^{u^t\Uu^t}+h^{\p_t\Uu^t}}d\pa*{\mu_N^t-\mu_V-\vep^2\Uu^t},
\end{multline}
where $h^f\coloneqq \g\ast f$.
\end{lemma}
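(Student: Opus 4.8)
The plan is to differentiate the three pieces of $\Hr_N$ in \eqref{eq:MEcon} separately and then recombine. First I would handle the modulated kinetic energy $\frac{1}{2N}\sum_i |v_i^t-u^t(x_i^t)|^2$. Using the ODEs \eqref{eq:NewODE}, $\frac{d}{dt}v_i^t = -\ga v_i^t - \frac{1}{\vep^2 N}\sum_{j\neq i}\nab\g(x_i^t-x_j^t) - \frac{1}{\vep^2}\nab V(x_i^t)$, and the chain rule $\frac{d}{dt}u^t(x_i^t) = (\p_t u^t)(x_i^t) + (v_i^t\cdot\nab)u^t(x_i^t)$. Expanding $\frac{1}{N}\sum_i (v_i^t-u^t(x_i^t))\cdot(\dot v_i^t - \frac{d}{dt}u^t(x_i^t))$, the friction term $-\ga v_i^t$ combines with a $+\ga u^t(x_i^t)$ produced by the equation for $u$ (using that $u$ solves the Lake equation on $\overset\circ\Sigma$; this is where the extension business and the corrector $\Uu$ enter) to give $-\frac{\ga}{N}\sum_i|v_i^t-u^t(x_i^t)|^2$, the transport term $(v_i^t\cdot\nab)u^t$ combines with $(u^t\cdot\nab)u^t$ from the equation to give the term $\frac1N\sum_i(v_i^t-u^t(x_i^t))\cdot((u^t(x_i^t)-v_i^t)\cdot\nab u^t)(x_i^t)$ after writing $v_i = (v_i - u^t(x_i)) + u^t(x_i)$, and the interaction/confinement force, together with $-\p_t u^t - \ga u^t - u^t\cdot\nab u^t = \nab p^t = \cd^{-1}(-\Delta)^{-\frac{\d-2-\s}{2}}\nab\Uu^t/(\dots)$ — more precisely, since $(-\Delta)^{\frac{\d-\s}{2}}\g = \cd\delta_0$, the corrector is designed so that $\nab\g\ast\Uu^t = -(\p_t u^t + \ga u^t + u^t\cdot\nab u^t) = \nab p^t$ on $\supp\mu_V$ — produces the cross terms against $\mu_V+\vep^2\Uu^t$.

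Second I would differentiate the $\zeta$-term: $\frac{1}{\vep^2 N}\sum_i \nab\zeta(x_i^t)\cdot v_i^t$, and split $v_i^t = (v_i^t - u^t(x_i^t)) + u^t(x_i^t)$. The piece with $u^t(x_i^t)$ gives the term $\frac{1}{\vep^2 N}\sum_i u^t(x_i^t)\cdot\nab\zeta(x_i^t)$ appearing in \eqref{eq:dtHNid}. The piece with $v_i^t - u^t(x_i^t)$ must be cancelled; this is exactly where one uses $\nab\zeta = \nab(h^{\mu_V}+V)$, so $\frac{1}{\vep^2N}\sum_i(v_i^t-u^t(x_i^t))\cdot\nab(h^{\mu_V}+V)(x_i^t)$ should be matched against the corresponding cross terms from differentiating the modulated potential energy — indeed the force $-\frac{1}{\vep^2 N}\sum_{j\neq i}\nab\g(x_i-x_j) - \frac{1}{\vep^2}\nab V(x_i)$ is precisely $-\frac{1}{\vep^2}\nab(h^{\mu_N^t} - \mu_N^t\text{-self} + V)(x_i)$, and writing $h^{\mu_N} + V = h^{\mu_N - \mu_V} + (h^{\mu_V}+V) = h^{\mu_N-\mu_V} + \zeta + c$ converts the confinement into the modulated quantity plus the $\zeta$ contribution. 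I would carefully track that the self-interaction exclusion on the diagonal is consistent with the $\setminus\triangle$ in $\Fr_N$.

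Third I would differentiate the modulated potential energy $\frac{1}{\vep^2}\Fr_N(X_N^t,\mu_V+\vep^2\Uu^t)$. Since $\mu_V$ is time-independent but $\Uu^t$ is not, there are two contributions: one from the moving particles $\dot x_i^t = v_i^t$, and one from $\p_t\Uu^t$. The particle-motion part is a standard computation (as in \cite{Serfaty2020,HkI2021}): it produces $\frac{1}{2\vep^2}\int_{(\R^\d)^2\setminus\triangle}(v(x)-v(y))\cdot\nab\g(x-y)\,d(\mu_N^t-\mu_V-\vep^2\Uu^t)^{\otimes2}$ with $v = $ particle velocities, plus a cross term $-\frac{1}{\vep^2 N}\sum_i v_i^t\cdot\nab(\g\ast(\mu_N^t - \mu_V - \vep^2\Uu^t))(x_i^t)$; again splitting $v_i = (v_i - u^t(x_i)) + u^t(x_i)$ turns the $v(x)-v(y)$ into $u^t(x)-u^t(y)$ plus a remainder proportional to $(v_i - u^t(x_i))$. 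All these $(v_i-u^t(x_i))$-proportional remainders, collected from the three pieces, must telescope: the confining/interaction force cross-term exactly cancels against $\dot v_i^t$'s interaction/confinement contribution in the kinetic part and the $\zeta$-part, by the definition of $h^\cdot$ and the decomposition $h^{\mu_N}+V = h^{\mu_N-\mu_V-\vep^2\Uu} + \zeta + c + h^{\mu_V+\vep^2\Uu} - h^{\mu_V} = h^{\mu_N - \mu_V - \vep^2\Uu} + \zeta + c + \vep^2 h^{\Uu}$, and $\nab(\vep^2 h^{\Uu}) = \vep^2\nab\g\ast\Uu = -\vep^2\nab p$-type term which combines with the leftover $\nab p$ from the kinetic side. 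The $\p_t\Uu^t$ contribution to $\frac{d}{dt}\Fr_N$ gives $-\frac{1}{\vep^2}\cdot\vep^2\int h^{\p_t\Uu^t}d(\mu_N^t-\mu_V-\vep^2\Uu^t) = -\int h^{\p_t\Uu^t}d(\mu_N^t-\mu_V-\vep^2\Uu^t)$, matching the penultimate term in \eqref{eq:dtHNid}, while the $-\int\div h^{u^t\Uu^t}\,d(\cdots)$ term arises from reorganizing the $u^t(x_i)$-part of the particle-motion cross term into a clean commutator form against $\mu_N^t - \mu_V - \vep^2\Uu^t$ minus the "self-correction" absorbed into $\Uu$ via $h^{u^t\Uu^t} \coloneqq \g\ast(u^t\Uu^t)$ understood componentwise — here I would use $\div(\g\ast(\Uu^t u^t)) = \nab\g\ast(\Uu^t u^t)$ and integration by parts to reconcile with the transported background term $\vep^2\int_{\R^\d}(u^t\cdot\nab)\cdot$ applied to $\Uu^t$.

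The main obstacle I anticipate is bookkeeping the cancellations precisely: verifying that all terms proportional to $(v_i^t - u^t(x_i^t))$ that are \emph{not} of the Cauchy–Schwarz-friendly form $(v_i-u^t(x_i))\cdot((u^t-v_i)\cdot\nab u^t)$ genuinely cancel, which forces the exact choice $\Uu^t = \cd(-\Delta)^{\frac{\d-2-\s}{2}}\div(\p_t u + \ga u + u\cdot\nab u)$ (so that $\cd\,\nab\g\ast\Uu^t = \nab\g\ast(-\Delta)^{\frac{\d-2-\s}{2}}\div(\cdots)$, and since $(-\Delta)^{\frac{\d-\s}{2}}\g = \cd\delta_0$ this equals $-\Delta^{-1}\div\nab(\dots)$... i.e. reproduces $\p_t u + \ga u + u\cdot\nab u = -\nab p$ up to a curl-free projection). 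A secondary subtlety is that $u$ solves the Lake equation only on $\overset\circ\Sigma$, not on all of $\R^\d$, so one cannot substitute the equation pointwise where particles have escaped $\Sigma$; one must check the identity \eqref{eq:dtHNid} holds as written \emph{without} using the equation off $\Sigma$, meaning the corrector and the $\zeta$-term are engineered so that the only place the Lake equation is invoked is inside $\supp\mu_V$, where the background integrations live — the particle sums run over all $i$ regardless. I would therefore be careful to only use $\nab p^t = -(\p_t u^t + \ga u^t + u^t\cdot\nab u^t)$ as the \emph{definition} of the object paired with $\mu_V$ (supported in $\Sigma$) and of $\Uu^t$ (a globally-defined operator applied to the global extension), never as a pointwise identity off $\Sigma$.
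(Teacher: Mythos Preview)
Your plan is correct and follows essentially the same route as the paper: differentiate the three pieces of $\Hr_N$ separately, split $v_i = (v_i-u(x_i)) + u(x_i)$ throughout, reorganize the $u(x_i)$-contributions into the symmetrized commutator, and choose $\Uu$ so that $-\nabla h^{\Uu} = \p_t u + \ga u + u\cdot\nabla u$ kills the leftover $(v_i-u(x_i))$-terms. One point you should make explicit in the execution: when you desymmetrize the commutator and pick up background integrals of the form $\int \mu_V u\cdot\nabla h^{\mu_V+\vep^2\Uu}$ and $\int u(x)\cdot\nabla\g(x-x_i)\,d\mu_V(x)$, these vanish only because $\div(\mu_V u)=0$ globally (a property of the extension), and this cancellation is what isolates the $\div h^{u\Uu}$ residual term cleanly; without invoking it you will not close the identity.
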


\begin{proof}[Proof of \cref{lem:MEsmfID}]
The proof is similar to that of \cite[Equation (2.7)]{HkI2021}, but more delicate given that $\mu_V$ is not constant and that $u$ is not a solution to \eqref{eq:Lake}, but rather an extension of the solution to $\R^\d$, which means that $u$ only solves \eqref{eq:Lake} in $\overset\circ\Sigma$. As commented in the introduction, a key difference compared to \cite{HkI2021} is the third term in \eqref{eq:MEcon}, which provides an essential cancellation, as the reader shall see. Finally, we remark that all the computations below are justified by our regularity assumption for $u$. To simplify the notation, we omit the time superscript in the calculations below.

By the chain rule,
\begin{multline}\label{eq:dtHNstart}
\frac{d}{dt}\Hr_{N}(\uz_N,u) = \frac{1}{2N}\sum_{i=1}^N (v_i-u(x_i)) \cdot (\dot{v}_i - \p_t u(x_i) - \nabla u(x_i)\cdot v_i) \\
+ \frac{1}{\vep^2}\frac{d}{dt}\Fr_N(\ux_N,\mu_V+\vep^2\Uu) + \frac{1}{\vep^2N}\sum_{i=1}^N \nabla\zeta(x_i)\cdot \dot{x}_i.
\end{multline}
Let us consider each of the three right-hand side terms separately.

{\bf The third term:} Using the first equation of \eqref{eq:NewODE}  and the definition \eqref{eq:zetadef} of $\zeta$, we find
\begin{equation}\label{eq:dtVcon}
\frac{1}{\vep^2 N}\sum_{i=1}^N \nabla\zeta(x_i)\cdot \dot{x}_i  = \frac{1}{\vep^2 N}\sum_{i=1}^N\nabla \zeta(x_i)\cdot v_i = \frac{1}{\vep^2 N}\sum_{i=1}^N\pa*{\nabla h^{\mu_V}(x_i) + \nabla V(x_i)}\cdot v_i.
\end{equation}


\medskip
{\bf The first term:} Using equation \eqref{eq:NewODE} for $\dot{x}_i, \dot{v}_i$, we find
\begin{multline}
(v_i - u(x_i))\cdot (\dot{v}_i - \p_t u(x_i) - (\dot{x}_i\cdot\nabla)u(x_i)) \\
= \pa*{v_i- u(x_i)} \cdot \Bigg(-\frac{1}{\vep^2 N}\sum_{\substack{1\leq j\leq N \\ j\neq i}}\nabla\g(x_i-x_j)
-\ga v_i- \frac{1}{\vep^2}\nabla V(x_i) -\p_t u(x_i) - (v_i\cdot\nabla) u(x_i)\Bigg).
\end{multline}
Observe that 
\begin{equation}
(u(x_i)\cdot\nabla)u(x_i) - (v_i\cdot\nabla) u(x_i)  = \pa*{(u(x_i)-v_i)\cdot\nabla}u(x_i),
\end{equation}
and therefore,
\begin{align}
(v_i-u(x_i)) \cdot \pa*{(u(x_i)\cdot\nabla)u(x_i) - (v_i\cdot\nabla) u(x_i)} &= (v_i-u(x_i)) \cdot\pa*{(u(x_i)-v_i)\cdot\nabla}u(x_i) \nn\\
&= - \nab u(x_i) : (v_i-u(x_i))^{\otimes 2}.
\end{align}
Similarly,
\begin{equation}
(v_i-u(x_i))\cdot (-\ga v_i) = -\ga |v_i-u(x_i)|^2 - \ga (v_i-u(x_i))\cdot u(x_i).
\end{equation}
Thus,
\begin{multline}\label{eq:dtkecon}
\frac{1}{N}\sum_{i=1}^N \pa*{v_i - u(x_i)}\cdot \pa*{\dot{v}_i - \p_t u(x_i) - \nabla u(x_i)\cdot \dot{x}_i} \\
= \underbrace{-\frac{1}{\vep^2 N^2}\sum_{1\leq j\neq i\leq N}\pa*{v_i - u(x_i)}\cdot \nabla\g(x_i-x_j)}_{T_1} {-\frac{1}{\vep^2 N}\sum_{i=1}^N \pa*{v_i - u(x_i)}\cdot\nabla V(x_i)}\\
- \frac{1}{N}\sum_{i=1}^N \nab u(x_i): (v_i-u(x_i))^{\otimes2} -\frac{\ga}{N}\sum_{i=1}^N |v_i-u(x_i)|^2 \\
{- \frac{1}{N}\sum_{i=1}^N \pa*{v_i - u(x_i)}\cdot\pa*{\p_t u(x_i) + \ga u(x_i) + u(x_i)\cdot\nabla u(x_i)}}.
\end{multline}
As we shall see, the last term will be canceled by the corrector $\Uu$. It is important for the reader to note that we cannot simply write
\begin{equation}
\p_t u +\ga u + u\cdot\nabla u = -\nabla p,
\end{equation}
since we recall that $u$ is actually an extension of the solution to \eqref{eq:Lake}, and therefore the equation only holds in the open set $\overset\circ\Sigma$,  while the particles are not necessarily confined to $\overset\circ\Sigma$.

\medskip
{\bf The second term:} Unpacking the definition \eqref{eq:MEdef} of $\Fr_N$, 
\begin{multline}
\frac{d}{dt} \Fr_N(\ux_N, \mu_V + \vep^2\Uu) = \frac{d}{dt}\frac{1}{2N^2}\sum_{1\leq i\neq j\leq N}\g(x_i-x_j) - \frac{d}{dt}\frac{1}{N}\sum_{i=1}^N h^{\mu_V + \vep^2\Uu}(x_i) \\
 +  \frac{d}{dt} \frac{1}{2}\int_{(\R^\d)^2} \g(x-y) d\pa*{\mu_V+\vep^2\Uu}^{\otimes 2}(x,y).
\end{multline}
By the chain rule and using the first equation of \eqref{eq:NewODE},
\begin{align}
\frac{d}{dt}\frac{1}{2N^2}\sum_{1\leq i\neq j\leq N}\g(x_i-x_j) &= \frac{1}{2N^2}\sum_{1\leq i\neq j\leq N} \nabla\g(x_i-x_j)\cdot\pa*{v_i-v_j} \nn\\
&= \frac{1}{N^2}\sum_{1\leq i\neq j\leq N} \nabla\g(x_i-x_j)\cdot \pa*{v_i - u(x_i)} \nn\\
&\ph+ \frac{1}{N^2}\sum_{1\leq i\neq j\leq N}\nabla\g(x_i-x_j)\cdot u(x_i). \label{eq:dtFcon1}
\end{align}
Since $\mu_V$ is independent of time, it follows from another application of the chain rule that
\begin{equation}
\frac{d}{dt}h^{\mu_V + \vep^2\Uu}(x_i) = \nabla h^{\mu_V+\vep^2\Uu}(x_i)\cdot v_i + \vep^2  h^{\p_t\Uu}(x_i).
\end{equation}
Thus,
\begin{equation}
\begin{split}
- \frac{d}{dt}\frac{1}{N}\sum_{i=1}^N h^{\mu_V + \vep^2\Uu}(x_i) &= -\frac{1}{N}\sum_{i=1}^N v_i\cdot\nabla h^{\mu_V}(x_i)  - \frac{\vep^2}{N}\sum_{i=1}^N v_i \cdot \nabla h^{\Uu}(x_i) - \frac{\vep^2}{N}\sum_{i=1}^N  h^{\p_t\Uu}(x_i). \label{eq:dtFcon2}
\end{split}
\end{equation}
Lastly, by the chain rule and since $\mu_V$ is independent of time,
\begin{equation}\label{eq:dtFcon3}
\frac{d}{dt} \frac{1}{2}\int_{(\R^\d)^2} \g(x-y) d\pa*{\mu_V+\vep^2\Uu}^{\otimes 2}(x,y) = \vep^2\int_{\R^\d} h^{\p_t\Uu}(y)d\pa*{\mu_V+\vep^2\Uu}(y).
\end{equation}
Putting together identities \eqref{eq:dtFcon1}, \eqref{eq:dtFcon2}, \eqref{eq:dtFcon3}, we see that
\begin{multline}
\frac{1}{\vep^2}\frac{d}{dt} \Fr_N(\ux_N, \mu_V + \vep^2\Uu) = \frac{1}{\vep^2 N^2}\sum_{1\leq i\neq j\leq N} \nabla\g(x_i-x_j)\cdot \pa*{v_i - u(x_i)} \\
+ \frac{1}{\vep^2N^2}\sum_{1\leq i\neq j\leq N}\nabla\g(x_i-x_j)\cdot u(x_i) -\frac{1}{\vep^2 N}\sum_{i=1}^N \pa*{v_i-u(x_i)}\cdot\nabla h^{\mu_V}(x_i)  \\
- \frac{1}{N}\sum_{i=1}^N \pa*{v_i-u(x_i)} \cdot \nabla h^{\Uu}(x_i) -\frac{1}{\vep^2 N}\sum_{i=1}^N u(x_i)\cdot\nabla h^{\mu_V}(x_i)- \frac{1}{N}\sum_{i=1}^N u(x_i) \cdot \nabla h^{\Uu}(x_i)\\
 - \frac{1}{N}\sum_{i=1}^N  h^{\p_t\Uu}(x_i) + \int_{\R^\d} h^{\p_t\Uu}d\pa*{\mu_V+\vep^2\Uu}.
\end{multline}
Note that
\begin{equation}
 - \frac{1}{N}\sum_{i=1}^N  h^{\p_t\Uu}(x_i) + \int_{\R^\d}h^{\p_t\Uu}d\pa*{\mu_V+\vep^2\Uu}  =-\int_{\R^\d}h^{\p_t\Uu}d\pa*{\mu_N-\mu_V-\vep^2\Uu}.
\end{equation}
Hence,
\begin{multline}\label{eq:dtFcon'}
\frac{1}{\vep^2}\frac{d}{dt} \Fr_N(\ux_N, \mu_V + \vep^2\Uu) = {\frac{1}{\vep^2 N^2}\sum_{1\leq i\neq j\leq N} \nabla\g(x_i-x_j)\cdot \pa*{v_i - u(x_i)}} \\
+ \frac{1}{\vep^2N^2}\sum_{1\leq i\neq j\leq N}\nabla\g(x_i-x_j)\cdot u(x_i) -\frac{1}{\vep^2 N}\sum_{i=1}^N (v_i-u(x_i))\cdot\nabla h^{\mu_V}(x_i)  \\
- \frac{1}{N}\sum_{i=1}^N (v_i-u(x_i)) \cdot \nabla h^{\Uu}(x_i)-\frac{1}{\vep^2N}\sum_{i=1}^N u(x_i)\cdot\nabla h^{\mu_V}(x_i) -\frac{1}{N}\sum_{i=1}^N u(x_i)\cdot\nabla h^{\Uu}(x_i)  \\
-\int_{\R^\d} h^{\p_t\Uu}d\pa*{\mu_N-\mu_V-\vep^2\Uu}.
\end{multline}
We next reorganize the right-hand side into  into a commutator of the form of the left-hand side of \eqref{main1} plus residual terms that either vanish as $\vep+N^{-1}\rightarrow 0$ or cancel with another term in the computation.

To this end, observe that 
\begin{multline}\label{eq:symprecon}
\frac{1}{2\vep^2}\int_{(\R^\d)^2\setminus\triangle} \pa*{u(x)-u(y)}\cdot\nabla\g(x-y)d\pa*{\mu_N-\mu_V-\vep^2\Uu}^{\otimes 2}(x,y)\\
=\frac{1}{\vep^2 N^2}\sum_{1\leq i\neq j\leq N} u(x_i)\cdot\nabla\g(x_i-x_j)  - \frac{1}{\vep^2 N}\sum_{i=1}^N u(x_i)\cdot\nabla h^{\mu_V}(x_i) - \frac{1}{N}\sum_{i=1}^N u(x_i)\cdot\nabla h^{\Uu}(x_i) \\
-\frac{1}{\vep^2 N}\sum_{i=1}^N\int_{\R^\d} u(x)\cdot\nabla\g(x-x_i)d\pa*{\mu_V+\ep^2\Uu}(x) \\
+ \frac{1}{\vep^2}\int_{\R^\d} \mu_V u\cdot\nabla h^{\mu_V+\vep^2\Uu} +  \int_{\R^\d}u\Uu\cdot \nabla h^{\mu_V+\vep^2\Uu}.
\end{multline}
Since $\mu_V u$ is divergence-free, we see from integrating by parts that
\begin{align}
\frac{1}{\vep^2}\int_{\R^\d} \mu_V u\cdot\nabla h^{\mu_V+\vep^2\Uu} =-\frac{1}{\vep^2}\int_{\R^\d}\div(\mu_V u) h^{\mu_V+\vep^2\Uu} =0.
\end{align}
Similarly, observe that
\begin{align}
&-\frac{1}{\vep^2N}\sum_{i=1}^N\int_{\R^\d}u(x)\cdot\nabla\g(x-x_i)d(\mu_V+\vep^2\Uu)(x) \nn\\
&= \frac{1}{\vep^2 N}\sum_{i=1}^N \int_{\R^\d}\div(\mu_V u)(x)\g(x-x_i)  + \frac{1}{N}  \int_{\R^\d}\div(u\Uu)(x)  \g(  x-x_i)  =\int_{\R^\d}\div h^{u\Uu}d\mu_N \label{eq:huU1}
\end{align}
 and
\begin{align}\label{eq:huU2}
 \int_{\R^\d}u\Uu\cdot \nabla h^{\mu_V+\vep^2\Uu} = -\int_{\R^\d}\div h^{u\Uu} d(\mu_V+\vep^2\Uu).
\end{align}
Thus, we see that the right-hand side of \eqref{eq:symprecon} simplifies,  after combining \eqref{eq:huU1} and \eqref{eq:huU2}, to
\begin{multline}
\frac{1}{\vep^2 N^2}\sum_{1\leq i\neq j\leq N} u(x_i)\cdot\nabla\g(x_i-x_j) - \frac{1}{\vep^2 N}\sum_{i=1}^N u(x_i)\cdot\nabla h^{\mu_V}(x_i)- \frac{1}{N}\sum_{i=1}^N u(x_i)\cdot\nabla h^{\Uu}(x_i) \\
+ \int_{\R^\d} \div h^{u\Uu}d\pa*{\mu_N-\mu_V-\vep^2\Uu},
\end{multline} 
from which it follows that
\begin{multline}\label{eq:dtFcon}
\frac{1}{\vep^2}\frac{d}{dt} \Fr_N(\ux_N, \mu_V + \vep^2\Uu) = \frac{1}{2\vep^2}\int_{(\R^\d)^2\setminus\triangle} \pa*{u(x)-u(y)}\cdot\nabla\g(x-y)d\pa*{\mu_N-\mu_V-\vep^2\Uu}^{\otimes 2}(x,y) \\
\underbrace{+\frac{1}{\vep^2 N^2}\sum_{1\leq i\neq j\leq N} \nabla\g(x_i-x_j)\cdot \pa*{v_i - u(x_i)}}_{-T_1}  -\frac{1}{\vep^2 N}\sum_{i=1}^N (v_i-u(x_i))\cdot\nabla h^{\mu_V}(x_i) - \frac{1}{N}\sum_{i=1}^N (v_i-u(x_i)) \cdot \nabla h^{\Uu}(x_i)  \\
-\int_{\R^\d}\pa*{\div h^{u\Uu}+h^{\p_t\Uu}}d\pa*{\mu_N-\mu_V-\vep^2\Uu}.
\end{multline}

Putting together the identities \eqref{eq:dtVcon}, \eqref{eq:dtkecon}, \eqref{eq:dtFcon} and canceling the $+T_1$ and $-T_{1}$ terms, we arrive at 
\begin{multline}\label{eq:dtHcon}
\frac{d}{dt}\Hr_{N}(\uz_N,u) =  -\frac{1}{N}\sum_{i=1}^N \nab u(x_i) : \pa*{v_i - u(x_i)}^{\otimes2}-\frac{\ga}{N}\sum_{i=1}^N |v_i-u(x_i)|^2\\ 
{+\frac{1}{\vep^2 N}\sum_{i=1}^N\pa*{\nabla h^{\mu_V}(x_i) + \nabla V(x_i)}\cdot v_i {-\frac{1}{\vep^2 N}\sum_{i=1}^N \pa*{v_i - u(x_i)}\cdot\nabla V(x_i)}} {-\frac{1}{\vep^2 N}\sum_{i=1}^N (v_i-u(x_i))\cdot\nabla h^{\mu_V}(x_i)}\\
{- \frac{1}{N}\sum_{i=1}^N \pa*{v_i - u(x_i)}\cdot\pa*{\p_t u(x_i) + \ga u(x_i) + (u(x_i)\cdot\nabla) u(x_i)}}  {- \frac{1}{N}\sum_{i=1}^N (v_i-u(x_i)) \cdot \nabla h^{\Uu}(x_i)} \\
+\frac{1}{2\vep^2}\int_{(\R^\d)^2\setminus\triangle} \pa*{u(x)-u(y)}\cdot\nabla\g(x-y)d\pa*{\mu_N-\mu_V-\vep^2\Uu}^{\otimes 2}(x,y) \\
 -\int_{\R^\d}\pa*{\div h^{u\Uu}+h^{\p_t\Uu}}d\pa*{\mu_N-\mu_V-\vep^2\Uu}.
\end{multline}

Let us simplify the right-hand side. Recalling the definition \eqref{eq:zetadef} of $\zeta$, we see that the second line reduces to
\begin{equation}
\frac{1}{\vep^2N}\sum_{i=1}^N u(x_i)\cdot \pa*{\nabla V(x_i) + \nabla h^{\mu_V}(x_i)} = \frac{1}{\vep^2N}\sum_{i=1}^N u(x_i)\cdot \nabla\zeta(x_i).
\end{equation}
We also want the third line to cancel. Accordingly, we choose $\Uu$ so that
\begin{equation}\label{eq:nabhUu}
-\nabla h^{\Uu} = \p_t u+\ga u+u\cdot\nabla u.
\end{equation}
Applying $\div$ to both sides and using  that $h^{f} = \cd (-\Delta)^{\frac{\s-\d}{2}}f$ and the Fourier multiplier identity $-\div\nabla(-\D)^{\frac{\s-\d}{2}} =  (-\D)^{\frac{2+\s-\d}{2}}$, we see that we should choose $\Uu = (-\D)^{\frac{\d-\s-2}{2}}\div(\p_t u+\ga u+u\cdot\nabla u)$, which evidently has zero mean. With this final cancellation, the proof of the lemma is now complete.
\end{proof}

Let us close this subsection with some comments on how to modify the preceding calculations when $\uz_N^t$ is now a solution of the system of SDEs \eqref{eq:NewSDE}. The argument is similar to that in the proofs of \cite[Lemma 6.1, Proposition 6.3]{RS2021}. One should start with a solution $\uz_{N,\ep}$ of a truncated SDE system, where the interaction potential $\g_{(\ep)}$ is smooth and agrees with $\g$ at distance $\ep$ from the origin.  Consider a modified version $\mathsf{H}_{N,\ep}$ of the total modulated energy $\mathsf{H}_N$, where $\g$ in $\Fr_N(X_{N,\ep},\mu_V+\vep^2\Uu)$ has been replaced by $\g_{(\ep)}$. One then computes $\E[\Hr_{N,\ep}(\uz_{N,\ep}^{\tau_\ep}, u^{\tau_{\ep}})]$, where $\tau_\ep$ is the stopping time when the minimal distance between particles is $\le \vep$, using Doob's optional stopping theorem and then shows that the truncated potential $\g_{(\ep)}$ can be replaced by $\g$ as $\ep\rightarrow 0$, up to vanishing error terms. The main difference between the setting of \eqref{eq:NewSDE} and that of \eqref{eq:NewODE} is that, by It\^o's formula, the nontrivial quadratic variation of the Wiener processes contributes a new term $+\d/\be$, coming from the modulated kinetic energy, to the right-hand side of \eqref{eq:dtHNstart}. Evidently, this new term vanishes if $\be\rightarrow\infty$ as $N\rightarrow\infty$.

\subsection{Gr\"{o}nwall argument}\label{ssec:MPgron}
We are in a position to complete the proof of \cref{thm:mainSMFrig}. With the formula \eqref{eq:dtHNid} given by \cref{lem:MEsmfID}, this part is just an application of estimates already in our possession.

Trivially,
\begin{multline}\label{eq:LGronkin}
-\frac{1}{N}\sum_{i=1}^N \nab u(x_i) : \pa*{v_i - u(x_i)}^{\otimes 2}  -\frac{\ga}{N}\sum_{i=1}^N |v_i-u(x_i)|^2 \\
\leq \frac{(\|(\nabla u)_{sym,-}\|_{L^\infty}-\ga)}{N}\sum_{i=1}^N |u(x_i)-v_i|^2
\end{multline}
where $(A)_{sym,-}$ denotes the negative, symmetric part of a square matrix $A$ (note that the antisymmetric part of $\nab u(x_i)$ has zero contribution above). This takes care of the first line on the right-hand side of \eqref{eq:dtHNid}. 

The second line in \eqref{eq:dtHNid} is handled via the sharp commutator estimate of \cref{prop:comm} with $\mu$ replaced by $\mu_V+\vep^2\Uu$ (note that $\int_{\R^\d}(\mu_V+\vep^2\Uu) = 1$ since $\mu_V$ is a probability measure and $\Uu$ has zero mean):
\begin{multline}\label{eq:LGronpot}
\frac{1}{2\vep^2}\Big|\int_{(\R^\d)^2\setminus\triangle} \pa*{u(x)-u(y)}\cdot\nabla\g(x-y)d\pa*{\mu_N-\mu_V-\vep^2\Uu}^{\otimes 2}(x,y) \Big| \\
 \leq \frac{C\|\nabla u\|_{L^\infty}}{\vep^2}\pa*{\Fr_N (\ux_N,\mu) +  \frac{\log(N\|\mu\|_{L^\infty})}{2\d N}\indic_{\s=0} +  C \|\mu\|_{L^\infty}^{\frac{\s}{\d}} N^{-1+\frac{\s}{\d}}}.
\end{multline}
By \cref{lem:ugradVext},
\begin{equation}\label{eq:LGronzeta}
\frac{1}{\vep^2N}\sum_{i=1}^N|u(x_i)\cdot \nabla\zeta(x_i)| \leq \frac{C\|u\|_{W^{1,\infty}}}{\vep^2N}\sum_{i=1}^N \zeta(x_i).
\end{equation}
Finally,  for the third line in \eqref{eq:dtHNid},  by \cref{lem:MPEcoer} with $\phi = \div h^{u\Uu}+h^{\p_t\Uu}$, we have that
\begin{multline}\label{eq:LGronRem}
\Big|\int_{\R^\d}\pa*{\div h^{u\Uu}+h^{\p_t\Uu}}d\pa*{\mu_N-\mu_V-\vep^2\Uu}\Big| \\
 \leq C\pa*{\|(-\Delta)^{\frac{\d-\s}{2}}\div h^{u\Uu}\|_{L^\infty}+\|(-\Delta)^{\frac{\d-\s}{2}}h^{\p_t\Uu}\|_{L^\infty}}(N\|{\mu_V+\vep^2\Uu}\|_{L^\infty})^{\frac{\s}{\d}-1} \\
+ C\pa*{\|\nabla\div h^{u\Uu}\|_{\dot{H}^{\frac{\d-\s}{2}}} + \|\nabla h^{\p_t\Uu}\|_{\dot{H}^{\frac{\d-\s}{2}}}}\Big(\Fr_N(\ux_N,\mu_V+\vep^2\Uu) + \frac{\log(\|\mu_V+\vep^2\Uu\|_{L^\infty}N)}{2\d N}\indic_{\s=0} \\
+ C\|\mu_V+\vep^2\Uu\|_{L^\infty}^{\frac{\s}{\d}}N^{\frac{\s}{\d}-1}\Big)^{1/2},
\end{multline}
where we have also implicitly used the triangle inequality.
{Applying the elementary inequality $ ab \le \frac12 (\vep^2 a^2+\frac{b^2}{ \vep^2})$, we may also bound the second term on the right-hand side by 
\begin{multline}\label{eq:prelog}
C\vep^2\pa*{\|\nabla\div h^{u\Uu}\|_{\dot{H}^{\frac{\d-\s}{2}}} + \|\nabla h^{\p_t\Uu}\|_{\dot{H}^{\frac{\d-\s}{2}}}}^2 \\
+ \frac{C}{\vep^2}\Big(\Fr_N(\ux_N,\mu_V+\vep^2\Uu) + \frac{\log(\|\mu_V+\vep^2\Uu\|_{L^\infty}N)}{2\d N}\indic_{\s=0}+ C\|\mu_V+\vep^2\Uu\|_{L^\infty}^{\frac{\s}{\d}}N^{\frac{\s}{\d}-1}\Big).
\end{multline}
Putting together the estimates \eqref{eq:LGronkin}, \eqref{eq:LGronpot}, \eqref{eq:LGronzeta}, \eqref{eq:LGronRem}, and \eqref{eq:prelog}, we have shown that
\begin{multline}
\frac{d}{dt}\Hr_{N}(\uz_N,u)  \leq \frac{(\|(\nabla u)_{sym,-}\|_{L^\infty}-\ga)}{N}\sum_{i=1}^N |u(x_i)-v_i|^2 \\
+\frac{C\|\nabla u\|_{L^\infty}}{\vep^2}\pa*{\Fr_N (\ux_N,\mu_V+\vep^2\Uu) +  \frac{\log(N\|\mu_V+\vep^2\Uu\|_{L^\infty})}{2\d N}\indic_{\s=0} +  C \|\mu_V+\vep^2\Uu\|_{L^\infty}^{\frac{\s}{\d}} N^{-1+\frac{\s}{\d}}}\\
+ \frac{C\|u\|_{W^{1,\infty}}}{\vep^2N}\sum_{i=1}^N \zeta(x_i)+ C\pa*{\|(-\Delta)^{\frac{\d-\s}{2}}\div h^{u\Uu}\|_{L^\infty}+\|(-\Delta)^{\frac{\d-\s}{2}}h^{\p_t\Uu}\|_{L^\infty}}(N\|{\mu_V+\vep^2\Uu}\|_{L^\infty})^{\frac{\s}{\d}-1}\\
+
C\vep^2\pa*{\|\nabla\div h^{u\Uu}\|_{\dot{H}^{\frac{\d-\s}{2}}}+ \|\nabla h^{\p_t\Uu}\|_{\dot{H}^{\frac{\d-\s}{2}}}}^2 + \frac{C}{\vep^2}\Big(\Fr_N(\ux_N,\mu_V+\vep^2\Uu) + \frac{\log(\|\mu_V+\vep^2\Uu\|_{L^\infty}N)}{2\d N}\indic_{\s=0}\\
+ C\|\mu_V+\vep^2\Uu\|_{L^\infty}^{\frac{\s}{\d}}N^{\frac{\s}{\d}-1}\Big).
\end{multline}

After a little bookkeeping, rewriting our differential inequality in integral form using the fundamental theorem of calculus, we have shown that
\begin{multline}\label{eq:Hpreadd}
\Hr_{N}(\uz_N^t,u^t) \leq \Hr_{N}(\uz_N^0,u^0) + \int_0^t\frac{(\|(\nabla u^\tau)_{sym,-}\|_{L^\infty}-\ga)}{N}\sum_{i=1}^N |u^\tau(x_i^\tau)-v_i^\tau|^2 d\tau \\
+ \int_0^t\Bigg(\frac{C\|u^\tau\|_{W^{1,\infty}}}{\vep^2N}\sum_{i=1}^N \zeta(x_i^\tau)+ C\pa*{\|(-\Delta)^{\frac{\d-\s}{2}}\div h^{u^\tau\Uu^\tau}\|_{L^\infty}+\|(-\Delta)^{\frac{\d-\s}{2}}h^{\p_\tau\Uu^\tau}\|_{L^\infty}} \\
\times(N\|{\mu_V+\vep^2\Uu^\tau}\|_{L^\infty})^{\frac{\s}{\d}-1}\Bigg) d\tau+\int_0^t\frac{C(1+\|\nabla u^\tau\|_{L^\infty})}{\vep^2}\Bigg(\Fr_N (\ux_N^\tau,\mu_V+\vep^2\Uu^\tau) +  \frac{\log(N\|\mu_V+\vep^2\Uu^\tau\|_{L^\infty})}{2\d N}\indic_{\s=0} \\
+  C \|\mu_V+\vep^2\Uu^\tau\|_{L^\infty}^{\frac{\s}{\d}} N^{-1+\frac{\s}{\d}}\Bigg)d\tau +\int_0^t C\vep^2\pa*{\|\nabla\div h^{u^\tau\Uu^\tau}\|_{\dot{H}^{\frac{\d-\s}{2}}} + \|\nabla h^{\p_\tau\Uu^\tau}\|_{\dot{H}^{\frac{\d-\s}{2}}}}^2d\tau.
\end{multline}
Now adding 
\begin{equation}
\frac{\log(N\|\mu_V+\vep^2\Uu^t\|_{L^\infty})}{2\d N\vep^2}\indic_{\s=0} + C\frac{\|\mu_V+\vep^2\Uu^t\|_{L^\infty}^{\frac{\s}{\d}}N^{\frac{\s}{\d}-1}}{\vep^2}
\end{equation}
to both sides of \eqref{eq:Hpreadd} so that the left-hand side is nonnegative provided $C$ is sufficiently large depending on $\d,\s$ (recall \cref{lem:MPElb}), it follows now from an application of the Gr\"{o}nwall-Bellman lemma that
\begin{equation}
\mathscr{H}_{N}^t \leq B^t e^{A^t},
\end{equation}
where
\begin{equation}
A^t\coloneqq C\int_0^t \pa*{1+\|\nabla u^\tau\|_{L^\infty} + \|u^\tau\|_{W^{1,\infty}} + (\|(\nabla u^\tau)_{sym,-}\|_{L^\infty}-\ga)_+}d\tau
\end{equation}
and 
\begin{multline}
B^t \coloneqq {\mathscr{H}_{N}(\uz_N^0,u^0)}\\
 + \Bigg(\sup_{0\le\tau\le t}\Big(\frac{\log(\frac{\|\mu_V+\vep^2\Uu^\tau\|_{L^\infty}}{\|\mu_V+\vep^2\Uu^0\|_{L^\infty}})}{2\d N\vep^2}\indic_{\s=0} + \frac{C(\|\mu_V+\vep^2\Uu^\tau\|_{L^\infty}^{\frac{\s}{\d}} -\|\mu_V+\vep^2\Uu^0\|_{L^\infty}^{\frac{\s}{\d}}) N^{\frac{\s}{\d}-1}}{\vep^2}\Big)\Bigg)_+ \\
+ \int_0^t  C\pa*{\|(-\Delta)^{\frac{\d-\s}{2}}\div h^{u^\tau\Uu^\tau}\|_{L^\infty}+\|(-\Delta)^{\frac{\d-\s}{2}}h^{\p_\tau\Uu^\tau}\|_{L^\infty}}(N\|{\mu_V+\vep^2\Uu^\tau}\|_{L^\infty})^{\frac{\s}{\d}-1}d\tau \\
+\int_0^t C\vep^2\pa*{\|\nabla\div h^{u^\tau\Uu^\tau}\|_{\dot{H}^{\frac{\d-\s}{2}}} + \|\nabla h^{\p_\tau\Uu^\tau}\|_{\dot{H}^{\frac{\d-\s}{2}}}}^{ 2}d\tau.
\end{multline}

The various norms involving the $h$'s can be estimated in terms of norms of $u^\tau,\Uu^\tau,\p_\tau\Uu^\tau$ by means of routine potential-theoretic estimates (cf. \cite[Lemmas 4.3-4.4]{Rosenzweig2021ne}). Indeed, by Plancherel's theorem, $\|\nabla h^{\p_\tau\Uu^\tau}\|_{\dot{H}^{\frac{\d-\s}{2}}} \leq \|\p_\tau\Uu^\tau\|_{\dot{H}^{\frac{2+\s-\d}{2}}}$ and
\begin{equation}
\|\nabla\div h^{u^\tau \Uu^\tau}\|_{\dot{H}^{\frac{\d-\s}{2}}} \leq \|(u^\tau\Uu^\tau)\|_{\dot{H}^{\frac{\s+4-\d}{2}}} \leq C\pa*{\|u^\tau\|_{\dot{H}^{\frac{\s+4-\d}{2}}}\|\Uu^\tau\|_{L^\infty} + \|u^\tau\|_{L^\infty}\|\Uu^\tau\|_{\dot{H}^{\frac{\s+4-\d}{2}}}},
\end{equation}
where the ultimate inequality follows from the homogeneous Moser product estimate \cite[Corollary 2.54]{BCD2011} (note $\s+4-\d\ge 2$).  Similarly, using the fact that $(-\Delta)^{\frac{\d-\s}{2}}h^f = \cd f$ and the product rule,
\begin{align}
\|(-\Delta)^{\frac{\d-\s}{2}}\div h^{u^\tau\Uu^\tau}\|_{L^\infty} \le C\|\div(u\Uu)\|_{L^\infty} \le C(\|\div u^\tau\|_{L^\infty}\|\Uu^\tau\|_{L^\infty} + \|u\|_{L^\infty}\|\nab \Uu^\tau\|_{L^\infty})
\end{align}
and
\begin{align}
\|(-\Delta)^{\frac{\d-\s}{2}}h^{\p_\tau\Uu^\tau}\|_{L^\infty} \le C\|\p_\tau\Uu^\tau\|_{L^\infty}.
\end{align}

The proof of \cref{thm:mainSMFrig} is now complete. One can go further and obtain a self-contained bound in terms of Sobolev norms of $u^t$, which in turn can be controlled by Sobolev norms of the original solution to equation \eqref{eq:Lake} (recall that $u$ is really the extension to all of $\R^\d$). We leave these details to the interested reader.

\section{The regime $\vep^2 \lesssim N^{\frac{\s}{\d}-1}$ for the 1D Coulomb gas}\label{sec:1DCou}

In this section, we consider the one-dimensional Coulomb case with quadratic confinement (i.e. $\s=-1$, $\g(x)= - |x|$, $V(x)=|x|^2$). Under the assumption that the initial empirical spatial density converges to the equilibrium measure, we show there is in general no weak limit for the empirical current as $\vep + N^{-1}\rightarrow 0$ if $\vep^{-2} N^{\frac{\s}{\d}-1} = \vep^{-2} N^{-2}$ does not vanish. Thus, for the supercritical mean-field limit in the (super-)Coulomb Riesz case, one cannot expect convergence to the Lake equation outside of the scaling assumption \eqref{eq:SMFscl} of our \cref{thm:mainSMF}. 

\begin{prop}\label{prop:1DCou}
There exists a sequence of initial positions $\XN^\circ$ such that $\Fr_N(\XN^\circ,\mu_V)\rightarrow 0$  and $\mu_N^\circ \rightharpoonup \mu_V$ as $N\rightarrow\infty$, but that if {$v_i^\circ = 0$ for each $1\le i\le N$},  the associated empirical current $J_N^t \coloneqq \frac1N\sum_{i=1}^N v_i^t\delta_{x_i^t}$ 
\begin{itemize}
\item converges to zero (which is the unique solution of the Lake equation \eqref{eq:LakeJ} {starting from zero initial datum}) uniformly on $[0,\infty)$ as $\vep+N^{-1}\rightarrow 0$  if $\vep N\rightarrow \infty$,
\item has no weak limit for any $t\in (0,\infty)$ as  $\vep+N^{-1}\rightarrow 0$ if $\vep N \not\rightarrow \infty$.
\end{itemize}
\end{prop}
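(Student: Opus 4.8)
The plan is to exploit the exact solvability of the one-dimensional Coulomb gas. First I would identify the equilibrium measure: here $(-\Delta)\g=2\delta_0$, so differentiating the Euler--Lagrange relation $h^{\mu_V}+V=c$ twice on $\Sigma$ gives $-2\mu_V+2=0$ there, i.e.\ $\mu_V\equiv1$ on $\Sigma$; since $\mu_V$ is a probability measure, $\Sigma=[-\tfrac12,\tfrac12]$ and $\mu_V=\indic_{[-1/2,1/2]}$. In one dimension the constraint $\div(\mu_V u)=0$ together with the no-flux condition $u(\pm\tfrac12)=0$ forces $u\equiv0$, so $u\equiv0$ (with $p$ constant) is the unique solution of \eqref{eq:LakeJ} issued from $u^\circ=0$, and the target current is $J\equiv0$.

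For the initial data, let $\xi_i\coloneqq\tfrac{2i-N-1}{2N}$, $i=1,\dots,N$, be the $N$ equispaced points of $(-\tfrac12,\tfrac12)$ — which are exactly the critical points of $\mathcal H_N$ — and set $x_i^\circ\coloneqq\xi_i+\tfrac1N$ (a rigid shift) and $v_i^\circ=0$. Then $\mu_N^\circ$ is the empirical measure of equispaced points translated by $1/N\to0$, so $\mu_N^\circ\rightharpoonup\mu_V$. Since $\g(0)=0$, no renormalization is needed when $\s<0$, and a short computation gives the identity $\Fr_N(\XN,\mu_V)=\|F_{\mu_N}-F_{\mu_V}\|_{L^2(\R)}^2$ with $F$ the cumulative distribution function; evaluating the piecewise-affine difference $F_{\mu_N^\circ}-F_{\mu_V}$, which is $O(1/N)$ pointwise on a set of length $1+O(1/N)$, yields $\Fr_N(\XN^\circ,\mu_V)\lesssim N^{-2}\to0$.

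The heart of the proof is integrating \eqref{eq:NewODE} explicitly. When the particles are ordered, $\nabla\g(x)=-\sgn x$ makes the net interaction force on the $k$-th particle the \emph{constant} $\tfrac{2k-N-1}{\vep^2N}$, so with $k=i$ the equation reads $\ddot x_i+\ga\dot x_i=-\tfrac{2}{\vep^2}(x_i-\xi_i)$. The rigid-shift ansatz $x_i^t=\xi_i+y^t$, $y^t$ independent of $i$, preserves the spacing $x_{i+1}^t-x_i^t=\tfrac1N>0$, hence the ordering, for all $t$; as the vector field in \eqref{eq:NewODE} is locally Lipschitz near any collisionless configuration, uniqueness of ODE solutions shows this is the solution, with $\ddot y+\ga\dot y+\tfrac{2}{\vep^2}y=0$, $y^0=\tfrac1N$, $\dot y^0=0$. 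For $\ga=0$ this is $y^t=\tfrac1N\cos(\sqrt2\,t/\vep)$ and $v_i^t=-\tfrac{\sqrt2}{N\vep}\sin(\sqrt2\,t/\vep)$ (for $\ga>0$ one gets the underdamped analogue, with an extra $e^{-\ga t/2}$); crucially $v_i^t$ is the same for every $i$, so $J_N^t=-\tfrac{\sqrt2}{N\vep}\sin(\sqrt2\,t/\vep)\,\mu_N^t$, where $\mu_N^t\coloneqq\tfrac1N\sum_i\delta_{x_i^t}$ and $|x_i^t-\xi_i|\le\tfrac1N$. The first bullet is then immediate: $\|J_N^t\|_{\mathrm{TV}}=\tfrac{\sqrt2}{\vep N}|\sin(\sqrt2\,t/\vep)|\le\tfrac{\sqrt2}{\vep N}\to0$ uniformly on $[0,\infty)$ whenever $\vep N\to\infty$, so $J_N^t\to0=\mu_V u$ uniformly in $t$.

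For the second bullet, test $J_N^t$ against $\phi\in C_c(\R)$ with $\phi\equiv1$ on $[-1,1]$ (which contains every $x_i^t$ for $N\ge2$): then $\langle J_N^t,\phi\rangle=-\tfrac{\sqrt2}{\vep N}\sin(\sqrt2\,t/\vep)$ \emph{exactly}. Along a subsequence with $\vep N\not\to\infty$ and $\vep\to0$, after a further extraction the amplitude $\tfrac{\sqrt2}{\vep N}$ is bounded below by a fixed positive constant while the frequency $\sqrt2/\vep\to\infty$; such sinusoids cannot converge pointwise on any set $I$ of positive measure, because by Egorov's theorem one would obtain uniform convergence — hence a uniform bound — on some $I'\subset I$ of positive measure, yet $\int_{I'}\langle J_N^t,\phi\rangle^2\,dt$ is eventually bounded below by a positive multiple of $|I'|$ (the average of $\sin^2$ tending to $\tfrac12$), a contradiction. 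Hence $J_N^t$ has no weak limit for a.e.\ $t\in(0,\infty)$. I expect the main work to lie not in this short oscillation argument but in the preliminaries — especially the exact integration of \eqref{eq:NewODE}, including the verification that no collisions occur so that the harmonic-oscillator description is valid for all time, and the estimate $\Fr_N(\XN^\circ,\mu_V)\to0$, which is routine modulo the cumulative-distribution identity and the $O(1/N)$ displacement of the droplet edges.
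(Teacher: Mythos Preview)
Your approach is essentially identical to the paper's: both exploit the exact solvability of the 1D Coulomb gas with quadratic confinement by observing that, once the particles are ordered, each coordinate decouples into a harmonic oscillator about the corresponding critical point $\xi_i$, so a rigid shift of the minimizer by $1/N$ yields the explicit solution $x_i^t=\xi_i+\tfrac1N\cos(\sqrt2\,t/\vep)$ and hence $J_N^t=-\tfrac{\sqrt2}{N\vep}\sin(\sqrt2\,t/\vep)\,\mu_N^t$. Your write-up is in places more careful than the paper's---you explicitly verify $\Fr_N(\XN^\circ,\mu_V)\to0$ via the CDF identity, justify collision-avoidance through the rigid-shift structure, and treat the damped case $\gamma>0$---and your normalization $\mu_V=\indic_{[-1/2,1/2]}$ is consistent with $\g(x)=-|x|$, whereas the paper carries an extra factor of two throughout (writing $\mu_V=\tfrac12\indic_{[-1,1]}$ and $\xi_i=\tfrac{2i-1-N}{N}$).

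One small gap: your Egorov argument for the second bullet yields only that $J_N^t$ has no weak limit for \emph{a.e.} $t>0$, whereas the statement claims this for \emph{every} $t>0$. This is easy to repair: for any fixed $t>0$, choose $\vep_k=\tfrac{\sqrt2\,t}{\pi/2+2k\pi}$ so that $\sin(\sqrt2\,t/\vep_k)=1$, and then choose $N_k$ so that $\vep_k N_k$ converges to two different positive values along two subsequences (possible since $\vep N\not\to\infty$); the resulting limits of $\langle J_{N_k}^t,\phi\rangle$ differ, so no weak limit exists. The paper's own proof is no more explicit on this point, simply asserting it ``evidently has no weak limit.''
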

\begin{proof}
As is folklore, the dynamics of the one-dimensional Coulomb gas with quadratic confinement are exactly solvable. Without loss of generality, we assume that the initial positions are ordered $x_1^\circ<\cdots < x_N^\circ$. Observe that for each $1\le i\le N$,
\begin{align}
- \frac1N\sum_{j\ne i}\g'(x_i-x_j) = \frac2N\sum_{j\neq i}\sgn(x_i-x_j) = \frac2N\sum_{j<i} 1 - \frac2N\sum_{j>i}1 &= \frac{2[(i-1) - (N-i)]}{N} \nn\\
& = \frac{2(2i - 1-N)}{N}.
\end{align}
Let $T_*$ be the maximal time such that $x_1^t<\cdots<x_N^t$ for all $t<T_*$. Such a $T_*$ exists by the initial point separation and the continuity of trajectories. Then on $[0,T^*)$, we have that
\begin{align}
\vep^2 \ddot{x}_i^t = - 2x_i^t +  \frac{2(2i - 1-N)}{N},
\end{align}
which may be integrated on $[0,T^*)$ to yield
\begin{align}\label{eq:xi1Dcou}
x_i^t = \Big(x_i^\circ -  \frac{(2i - 1-N)}{N}\Big)\cos(\sqrt{2}t/\vep) + v_i^\circ\sin(\sqrt{2}t/\vep) + \frac{(2i - 1-N)}{N}.
\end{align}
Hence,
\begin{align}
x_{i+1}^t - x_i^t = (v_{i+1}^\circ - v_i^\circ)\sin(\sqrt{2}t/\vep) + (x_{i+1}^\circ-x_i^\circ - \frac{2}{N})\cos(\sqrt{2}t/\vep) + \frac{2}{N}.
\end{align}
If for each $i$, we assume that $|v_{i+1}^\circ-v_i^\circ| + |x_{i+1}^\circ-x_i^\circ - \frac2N|< \frac2N$, then it follows from the triangle inequality that $x_{i+1}^t - x_i^t>0$. Thus, under this assumption,  we may conclude from a continuity argument that the particles preserve their initial order for all times and the equation \eqref{eq:xi1Dcou} holds on $[0,\infty)$.

The oscillations on the fast time scale $\vep$, which do not vanish as $\vep\rightarrow 0$, rule out any convergent behavior at the microscopic level. Moreover, we cannot even guarantee the convergence of the empirical current $J_N^t$ to a solution of the Lake equation even when the empirical spatial density $\mu_N^t$ converges to the equilibrium measure $\mu_V = \frac12\indic_{[-1,1]}$.

To see this last assertion, simply take $v_i^\circ = 0$ and $x_i^\circ = \frac{2i-1-N}{N} + \frac1N$ for each $i$. As $(\frac{2i-1-N}{N})_{i=1}^N$ is the unique {(up to permutation)} critical point configuration of the microscopic energy \eqref{eq:ENdef}, $\XN^\circ$ is not a critical point. However, it is straightforward to check that $\frac1N\sum_{i=1}^N\delta_{x_i^\circ} \rightharpoonup \mu_V$ as $N\rightarrow\infty$, and evidently,
\begin{align}
|v_{i+1}^\circ-v_i^\circ|  + |x_{i+1}^\circ-x_i^\circ-\frac2N| = 0 < \frac2N,
\end{align}
so that equation \eqref{eq:xi1Dcou} holds on $[0,\infty)$. Hence,
\begin{align}
x_i^t &= \frac1N\cos(\sqrt{2}t/\vep) + \frac{(2i-1-N)}{N}, \\
v_i^t &= -\frac{\sqrt{2}}{N\vep}\sin(\sqrt{2}t/\vep),
\end{align}
from which it follows that
\begin{align}
J_N^t = \frac1N\sum_{i=1}^N \Big(-\frac{\sqrt{2}\sin(\sqrt{2}t/\vep)}{N\vep}\Big)\delta_{x_i^t} = -\frac{\sqrt{2}\sin(\sqrt{2}t/\vep)}{N\vep}\mu_N^t,
\end{align}
which evidently has no weak limit {for $t>0$} if $N\vep \not\rightarrow \infty$.
\end{proof}


\appendix
\section{Regular interactions}\label{sec:appreg}
In this appendix, we show how to extend our method to treat the case of sufficiently regular interactions on the flat torus $\T^\d$---for which no commutator estimate of the form \cref{prop:comm} can hold in general\footnote{It is not difficult to construct an explicit counterexample when $\g$ is Gaussian.}---under the assumption that the equilibrium measure has full support in $\T^\d$.  The reason for this latter assumption is that there is no concern about boundaries of support and need to connect with the obstacle problem. We recall from \cref{sec:EMOP} that one can tailor the confinement so that the equilibrium measure coincides with a desired probability measure. Hence, the full support assumption is not vacuous. The results of this appendix provide a mathematical basis (in the monokinetic regime) for the universality of the Lake equation as a supercritical mean-field limit for smooth interactions. The motivation for considering $\T^\d$, as opposed to $\R^\d$, is explained in \cref{ssec:appregLake} below. {As elaborated on in \cref{ssec:appregMF} below, this extension of the modulated-energy method works just as well---and is even simpler---for the usual mean-field limit for the second-order monokinetic or first-order dynamics.}

We assume that the interaction $\g$ has zero average and the  confinement $V$ is such that the equilibrium measure $\mu_V$ has full support in $\T^\d$, i.e.~there exists a constant $c\in\R$ such that
\begin{align}\label{eq:equchar'}
h^{\mu_V} + V  = c, \qquad \text{q.e.} \  \in \T^\d,
\end{align}
or equivalently, the function $\zeta$ from \eqref{eq:zetadef} is zero q.e.~in $\T^\d$. We assume that the Fourier transform $\hat{\g}(\xi)>0$ for $\xi\ne 0$ (a type of repulsive assumption), which implies by \eqref{eq:equchar'} that
\begin{align}
\mu_V -1 = {\widehat{(1/\g)}(D)}\Big(c-V\Big), \qquad \text{a.e.} \ \in \T^\d,
\end{align}
where {$\widehat{(1/\g)}(D)$} is the Fourier multiplier with symbol {$1/\hat{\g}(\xi)$}.

Let $\w(x-y)$ be the kernel of the Fourier multiplier $(I-\Delta)^{-\frac{\ka}{2}}$ for some {$\ka > \d$}, {the exact requirement for which will be specified below}. Clearly, $\w$ is positive definite and by Sobolev embedding, $\w\in C^{{\ka-\d}}$ if {$\ka>\d$}. Observe that if $\tl{h}^f \coloneqq \w\ast f$, then we may rewrite
\begin{align}\label{eq:welec}
 \|f\|_{H^{-\frac{\ka}{2}}}^2 = \int_{(\T^\d)^2}\w(x-y)df^{\otimes 2}(x,y)  = \int_{\T^\d} |(I-\Delta)^{\frac{\ka}{4}}\tl{h}^{f}|^2.
\end{align}

We {redefine} the total modulated energy as
\begin{multline}\label{eq:TMEdefreg}
\Hr_N(\ZN^t,u^t) \coloneqq \frac{1}{2N}\sum_{i=1}^N |v_i^t-u^t(x_i^t)|^2 + \frac{1}{\vep^2}\Fr_N(\ux_N^t,\mu_V+\vep^2\Uu^t)  \\
+\frac{1}{2\vep^2}\int_{(\T^\d)^2}\w(x-y)d(\mu_N^t-\mu_V-\vep^2\Uu^t)^{\otimes 2}(x,y)
\end{multline}
for a solution $\ZN^t$ of the microscopic system \eqref{eq:NewODE} and a solution $u^t$ of the Lake equation \eqref{eq:Lake}. We no longer need to consider an extension $\tl{u}^t$ of a solution to \eqref{eq:Lake} as in \cref{sec:MP} because $\mu_V$ has full support. Also, note that the $\zeta$ term has disappeared because $\zeta\equiv 0$, as remarked above. Strictly speaking, the correction $\vep^2\Uu^t$ in the last term of \eqref{eq:TMEdefreg} is not necessary; but it leads to cleaner estimates. We also mention that $\Uu^t$ is now explicitly given in terms of the pressure by $h^{\Uu^t} = p^t$, which may be inverted by applying {$\widehat{1/\g}(D)$} to both sides, using our assumption that $\hat{\g}(\xi)>0$ for $\xi\ne 0$. 

We assume further that $\g$ has the following regularity property: {for some $\ka>\d+2$,} there exists a constant $C>0$ such that for any test function $f$,
\begin{align}\label{eq:nabgHka}
\|\nab\g\ast f\|_{H^{\frac{\ka}{2}}} \le C \|f\|_{H^{-\frac{\ka}{2}}}.
\end{align}
{We choose $\w$ based on this value of $\ka$.} As the reader may check from taking $f$ to be a smearing of $\delta_0$, the condition \eqref{eq:nabgHka} implies that $\nab\g$ is {$C^1$}. Consequently, we may re-insert the diagonal in the definition \eqref{eq:MEdef} of the modulated potential energy, yielding a nonnegative quantity {that we continue to denote by $\Fr_N$}. It is evident from Plancherel's theorem that this property is satisfied, for instance, if the Fourier transform of $\g$ is bounded and is $\lesssim |\xi|^{-1-\ka}$ as $|\xi|\rightarrow\infty$. 


The main result of this appendix is the following theorem (cf. \cref{thm:mainSMFrig} above).

\begin{thm}\label{thm:mainSMFrigreg}
Let $\ka,\g,\mu_V$ satisfy the above assumptions. Let $Z_N^t$ be a solution \eqref{eq:NewODE} and $u^t$ be a solution of \eqref{eq:Lake}. Then there {exists a constant $C>0$} depending only on {$\d,\s,\ka$ and the constant in \eqref{eq:nabgHka}}, such that for every $t\in [0,T]$,
\begin{multline}\label{eq:SMFrigreg}
\Hr_{N}(\uz_N^t,u^t) \le \exp\Big(C\int_0^t(\|u^\tau\|_{H^{\frac{\ka+2}{2}}} + (\|(\nab u^\tau)_{sym,-} \|_{L^\infty}+\frac12-\ga)_+ ) d\tau\Big)\\
\times\Bigg(\Hr_{N}(\uz_N^0,u^0) + \vep^2\int_0^t  \Big( \|u^\tau\Uu^\tau\|_{H^{-\frac{\ka}{2}}} + \|(-\Delta)^{-\frac{1}{2}}\p_\tau \Uu^\tau\|_{H^{-\frac{\ka}{2}}}\Big)^2  d\tau\Bigg).
\end{multline}
\end{thm}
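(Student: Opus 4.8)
The plan is to run the proof of \cref{thm:mainSMFrig} again, structurally unchanged, adapting it to the redefined total modulated energy \eqref{eq:TMEdefreg}. First I would establish the analogue of the differential identity \cref{lem:MEsmfID}: differentiate \eqref{eq:TMEdefreg} term by term along the flow \eqref{eq:NewODE}. The modulated kinetic energy contributes $-\frac1N\sum_i\nabla u(x_i):(v_i-u(x_i))^{\otimes2}-\frac{\ga}{N}\sum_i|v_i-u(x_i)|^2$ together with the $\g$-force fluctuation term; $\frac1{\vep^2}\frac{d}{dt}\Fr_N$, after the reorganization of \eqref{eq:dtFcon}, yields the $\g$-commutator $\frac1{2\vep^2}\int_{(\T^\d)^2}(u(x)-u(y))\cdot\nabla\g(x-y)\,d\nu^{\otimes2}$ (with $\nu:=\mu_N-\mu_V-\vep^2\Uu$), minus that same $\g$-force fluctuation term, the pressure-type term $-\frac1N\sum_i(v_i-u(x_i))\cdot(\partial_tu+\ga u+u\cdot\nabla u+\nabla h^\Uu)(x_i)$, and the residuals $\langle u\Uu,\nabla h^\nu\rangle-\langle h^{\partial_t\Uu},\nu\rangle$. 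Since $\mu_V$ has full support, $\zeta\equiv0$, so the $\zeta$-contribution of \cref{lem:MEsmfID} is absent; and since here $u$ genuinely solves \eqref{eq:Lake} on all of $\T^\d$, the choice $h^\Uu=p$ makes the pressure-type term vanish identically (this is why that choice is made). Finally $\frac{d}{dt}$ of the $\w$-term produces, by the same computation with $\g,h$ replaced by $\w,\tl h$, a $\w$-commutator $\frac1{2\vep^2}\int(u(x)-u(y))\cdot\nabla\w(x-y)\,d\nu^{\otimes2}$, the residuals $\langle u\Uu,\nabla\tl h^\nu\rangle-\langle\tl h^{\partial_t\Uu},\nu\rangle$, and fluctuation remainders in which $\nabla\tl h^\nu$ is tested against $\mu_V$, $\mu_N$, and the fluctuation current $\frac1N\sum_i(v_i-u(x_i))\delta_{x_i}$; the $\mu_V$ piece drops out by $\div(\mu_V u)=0$. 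Collecting these is the analogue of \cref{lem:MEsmfID}.

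The decisive point is that, lacking a sharp Riesz commutator estimate of the type \cref{prop:comm}, the $\g$-commutator must be controlled solely by the hypothesis \eqref{eq:nabgHka}. Antisymmetrizing gives $\int(u(x)-u(y))\cdot\nabla\g(x-y)\,d\nu^{\otimes2}=2\langle\nu,u\cdot(\nabla\g\ast\nu)\rangle$; by \eqref{eq:nabgHka}, the embedding $H^{\ka/2}\hookrightarrow L^\infty$ (valid since $\ka>\d$), and the Moser product estimate, this is $\lesssim\|u\|_{H^{(\ka+2)/2}}\|\nu\|_{H^{-\ka/2}}^2$, and since $\|\nu\|_{H^{-\ka/2}}^2=2\int_{(\T^\d)^2}\w\,d\nu^{\otimes2}$ by \eqref{eq:welec}, division by $\vep^2$ turns this into $\lesssim\|u\|_{H^{(\ka+2)/2}}\Hr_N$ --- exactly what Gr\"onwall needs, and the origin of the $\|u\|_{H^{(\ka+2)/2}}$ in the exponent of \eqref{eq:SMFrigreg}. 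The $\w$-commutator is bounded the same way, now exploiting that $\w$ is as smooth as desired, so that $\rho\mapsto\int(u(x)-u(y))\cdot\nabla\w(x-y)\,d\rho^{\otimes2}$ is bounded on $H^{-\ka/2}$ with norm $\lesssim\|u\|_{H^{(\ka+2)/2}}$. For the residuals I would move the derivative off $u\Uu$ (resp.\ $\partial_t\Uu$) onto $h^\nu$ (resp.\ $\tl h^\nu$) and use that \eqref{eq:nabgHka} forces $\hat\g(\xi)\lesssim\langle\xi\rangle^{-\ka-1}$, giving $|\langle u\Uu,\nabla h^\nu\rangle|\lesssim\|u\Uu\|_{H^{-\ka/2}}\|\nu\|_{H^{-\ka/2}}$ and $|\langle h^{\partial_t\Uu},\nu\rangle|\lesssim\|(-\Delta)^{-1/2}\partial_t\Uu\|_{H^{-\ka/2}}\|\nu\|_{H^{-\ka/2}}$, and likewise for the $\w$-counterparts via $\|\tl h^\nu\|_{H^{\ka/2}}=\|\nu\|_{H^{-\ka/2}}$; Young's inequality with an $\vep^2$ weight then splits each into an $O(\vep^2)$ additive error of the stated form plus $O(\Hr_N)$. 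The particle-position remainders from the $\w$-term are handled by organizing the $\vep^{-2}$-weighted contributions so that they collapse into the $\w$-commutator plus terms testing $\nabla\tl h^\nu\in H^{\ka/2-1}\hookrightarrow L^\infty$ (since $\ka>\d+2$, with $\|\nabla\tl h^\nu\|_{H^{\ka/2-1}}\le\|\nu\|_{H^{-\ka/2}}$) against the fluctuation current, whose negative Sobolev norms are controlled by the modulated kinetic energy; what survives is absorbed into the $\tfrac12$-shift of the kinetic coefficient and into $\Hr_N$. With every term bounded, the Gr\"onwall--Bellman lemma yields \eqref{eq:SMFrigreg}.

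I expect the genuine difficulty to be the bookkeeping of these remainder terms rather than any single estimate: because the regular kernel admits no electric or stress-energy reformulation, the cancellations among the $\vep^{-2}$-weighted contributions (the $\g$-force and the pressure) must be tracked by hand as above, and one must verify that every surviving $\vep^{-2}$-weighted term is truly of the form $\|u\|\cdot\Hr_N$ after invoking the coercivity supplied by the $\w$-term --- here the freedom to take $\w$ smoothing of arbitrarily high order is precisely what buys the needed room. The much simpler mean-field statement (the case $\vep=1$, no corrector, no $\w$-term) is then the same argument applied to the nonnegative $\Fr_N$ obtained by reinstating the diagonal, with the commutator controlled directly by \eqref{eq:nabgHka}.
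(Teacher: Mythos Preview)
Your overall strategy matches the paper's exactly: differentiate \eqref{eq:TMEdefreg}, observe that $\zeta\equiv 0$ and the pressure term cancels via $h^\Uu=p$, and estimate each surviving term to close a Gr\"onwall inequality. Your treatment of the $\g$-commutator (duality plus product estimate using \eqref{eq:nabgHka}), the fourth and seventh residual terms (duality plus Young with weight $\vep^2$), and the fifth term (Cauchy--Schwarz plus Sobolev embedding $\nabla\tl h^\nu\in L^\infty$, producing the $\tfrac12$-shift) all coincide with the paper's argument.

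There is, however, a genuine gap in your handling of the sixth term, the ``$\w$-commutator'' $\frac{1}{\vep^2}\int u\cdot\nabla\tl h^\nu\,d\nu$. You say it is ``bounded the same way'' as the $\g$-commutator, but the duality-plus-product argument relies crucially on \eqref{eq:nabgHka}, which says $\|\nabla\g\ast\nu\|_{H^{\ka/2}}\lesssim\|\nu\|_{H^{-\ka/2}}$. The analogous statement for $\w$ is \emph{false}: since $\tl h^\nu=(I-\Delta)^{-\ka/2}\nu$, one has $\|\nabla\tl h^\nu\|_{H^{\ka/2}}\sim\|\nu\|_{H^{1-\ka/2}}$, which is a full derivative stronger than $\|\nu\|_{H^{-\ka/2}}$ and not controlled by it. Extra smoothness of $\w$ does not help here; the obstruction is purely at the level of Sobolev indices. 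The paper instead writes
\[
\int u\cdot\nabla\tl h^\nu\,d\nu=\int (I-\Delta)^{\ka/4}(u\cdot\nabla\tl h^\nu)\,(I-\Delta)^{\ka/4}\tl h^\nu,
\]
splits off the Kato--Ponce commutator $[(I-\Delta)^{\ka/4},u\cdot\nabla]\tl h^\nu$ (which gains a derivative and is bounded by $\|\nabla u\|_{H^{\ka/2}}\|\nu\|_{H^{-\ka/2}}$), and integrates the remaining transport term $\int u\cdot\nabla(I-\Delta)^{\ka/4}\tl h^\nu\cdot(I-\Delta)^{\ka/4}\tl h^\nu$ by parts to obtain $-\tfrac12\int(\div u)|(I-\Delta)^{\ka/4}\tl h^\nu|^2\lesssim\|\div u\|_{L^\infty}\|\nu\|_{H^{-\ka/2}}^2$. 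This commutator trick, not the naive product estimate, is what produces the $\|\nabla u\|_{H^{\ka/2}}\sim\|u\|_{H^{(\ka+2)/2}}$ factor and closes the estimate.
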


\subsection{Regularity for the Lake equation}\label{ssec:appregLake}
{Before turning to the proof of \cref{thm:mainSMFrigreg}, let us comment on the regularity assumptions for the solution $u$ of the Lake equation \eqref{eq:Lake} and why we choose to consider $\T^\d$ instead of $\R^\d$.


By testing the identity \eqref{eq:press} against $p$, integrating by parts, and using Cauchy-Schwarz, we see that
\begin{align}
\int_{\T^\d}|\nab p|^2 d\mu_V \le \int_{\T^\d}\nab p (u\cdot\nab)u d\mu_V \le \|\nab p\|_{L^2(\mu_V)} \|u\|_{L^\infty} \|\nab u\|_{L^2(\mu_V)},
\end{align}
which  implies that
\begin{align}\label{eq:nabp}
\|\nab p\|_{L^2(\mu_V)} \le \|u\|_{L^\infty}\|\nab u\|_{L^2(\mu_V)}.
\end{align}
In particular, if $\inf\mu_V >0$, then we have an $L^2$ bound for $\nab p$ in terms of $\|u\|_{L^\infty},\|\nab u\|_{L^2(\mu_V)}$. In the whole space, it is not possible to have a probability density that is uniformly bounded from below. To avoid this issue in $\R^\d$, we would then have to work in weighted spaces. But this would require modification of our modulated energy scheme in the form of the third term of \eqref{eq:TMEdefreg}. Furthermore, \eqref{eq:nabp} on its own is not useful. It will not allow us to close an energy estimate for $\|u\|_{L^2(\mu_V)}$ due to the loss of derivative and the need for an $L^\infty$ bound on $u$. Instead, we want to show---and indeed can show using the $\div(\mu_Vu)=0$ assumption to overcome the loss of derivative---that for large enough $k$ (namely, $k>\d$), $\|\nab^{\otimes(k+1)}p\|_{L^2(\mu_V)}$ is controlled by $\sum_{j=0}^k\|\nab^{\otimes j} u\|_{L^2(\mu_V)}$ and $\|u\|_{C^{\frac{k}{2}}}$, up to a factor depending on the $L^\infty$ norm of derivatives of $\mu_V$ and $(\mu_V)^{-1}$. Assuming suitable decay assumptions on $\mu_V$ and its derivatives, this latter factor can even be bounded in the whole space. Such a bound for $\|\nab^{\otimes(k+1)}p\|_{L^2(\mu_V)}$ is acceptable for an energy estimate for $\sum_{j=0}^k\|\nab^{\otimes j}u\|_{L^2(\mu_V)}$, provided that one can control $\|u\|_{C^{\frac{k}{2}}}$. Unfortunately, such a control does not follow from $\sum_{j=0}^k\|\nab^{\otimes j}u\|_{L^2(\mu_V)}$ unless $\inf\mu_V>0$. In which case, the assumption on $k$ may weakened to $k>\frac{\d}{2}+1$ through a more efficient use of H\"older and Sobolev inequalities.

For the above described reasons, we work on the torus, where the regularity assumptions on $u$ in \cref{thm:mainSMFrigreg} may be shown to hold if $\mu_V$ is sufficiently smooth and $\inf\mu_V>0$. The argument is similar to the case of the bounded domain, and, in fact, is easier due to the absence of the boundary condition. We emphasize, though, that the proof of \cref{thm:mainSMFrigreg} presented in the next subsection goes through unchanged (removing the zero mean assumption on $\g$) if we instead work in the whole space, and \cref{thm:mainSMFrigreg} is valid in $\R^\d$, conditional on the regularity for $u$. We are not aware of a work establishing solutions with this regularity (existing results in the whole space, such as \cite{Duerinckx2018} for $\d=2$, assume $\log\mu_V \in L^\infty$), and it would be interesting to address this.

\subsection{Main proof}
To prove \cref{thm:mainSMFrigreg}, we first compute the evolution of the new total modulated energy \eqref{eq:TMEdefreg} (cf. \cref{lem:MEsmfID} above).

\begin{lemma}\label{lem:MEsmfIDreg}
It holds that
\begin{multline}\label{eq:MEsmfIDreg}
\frac{d}{dt}\Hr_{N}(\uz_N^t,u^t) = -\frac{1}{N}\sum_{i=1}^N \nab u^t(x_i^t) : \pa*{v_i^t - u^t(x_i^t)}^{\otimes 2} -\frac{\ga}{N}\sum_{i=1}^N |v_i^t-u^t(x_i^t)|^2\\
+\frac{1}{2\vep^2}\int_{(\T^\d)^2\setminus\triangle} \pa*{u^t(x)-u^t(y)}\cdot\nabla\g(x-y)d(f^t)^{\otimes 2}(x,y) -\int_{\T^\d}\pa*{\div h^{u^t\Uu^t}+h^{\p_t\Uu^t}}df^t\\
+\frac{1}{N\vep^2}\sum_{i=1}^N (v_i^t-u^t(x_i^t))\cdot(\nab\tl{h}^{f^t})(x_i^t) + \frac{1}{\vep^2}\int_{\T^\d}u^t\cdot\nab \tl{h}^{f^t} d{f^t} - \int_{\T^\d} \Big(\div \tl{h}^{u^t\Uu^t}df^t +\tl{h}^{\p_t\Uu^t}\Big) df^t,
\end{multline}
\end{lemma}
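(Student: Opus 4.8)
The plan is to follow the same bookkeeping as in the proof of \cref{lem:MEsmfID}, with two simplifications peculiar to the present setting — there is no $\zeta$ term, because $\zeta\equiv 0$ on $\T^\d$ by the full-support assumption \eqref{eq:equchar'}, and $u$ itself (not an extension) solves \eqref{eq:Lake} everywhere — and one genuinely new computation, the time-derivative of the regularization term.

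First I would differentiate the three constituents of \eqref{eq:TMEdefreg} separately, exactly as in \eqref{eq:dtHNstart}. The derivatives of the modulated kinetic energy and of $\tfrac{1}{\vep^2}\Fr_N(\XN,\mu_V+\vep^2\Uu)$ I would compute verbatim as in \cref{lem:MEsmfID}: substitute the equations of motion \eqref{eq:NewODE} for $\dot x_i,\dot v_i$ into the chain rule, split $v_i=(v_i-u(x_i))+u(x_i)$, and apply the symmetrization identity \eqref{eq:symprecon} together with $\div(\mu_V u)=0$ and the vanishing mean of $\Uu$. The interaction cross-term $T_1$ produced by $\dot v_i$ cancels against its counterpart from $\tfrac{d}{dt}\Fr_N$, just as before (the diagonal, whether or not re-inserted in $\Fr_N$ as is possible here, contributes nothing to $\tfrac{d}{dt}\Fr_N$). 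The differences are: the contribution $\tfrac{1}{\vep^2N}\sum_i\nab\zeta(x_i)\cdot\dot x_i$ is simply absent, so the residual $\zeta$-type terms left over from \eqref{eq:dtkecon} and \eqref{eq:dtFcon} collapse to $-\tfrac{1}{\vep^2N}\sum_i(v_i-u(x_i))\cdot\nab\zeta(x_i)\equiv 0$; and, $u$ solving \eqref{eq:Lake} on all of $\T^\d$, one may substitute $\p_t u+\ga u+u\cdot\nab u=-\nab p$ directly, which makes the corrector $h^{\Uu}=p$ the natural choice and cancels the $\p_t u+\ga u+u\cdot\nab u$ term against the $\nab h^{\Uu}$ term coming out of $\tfrac{d}{dt}\Fr_N$. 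This should reproduce the first two lines of \eqref{eq:MEsmfIDreg}.

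Next comes the only new step: differentiating $\Phi(t)\coloneqq\tfrac{1}{2\vep^2}\int_{(\T^\d)^2}\w(x-y)\,d(f^t)^{\otimes 2}(x,y)$ with $f^t\coloneqq\mu_N^t-\mu_V-\vep^2\Uu^t$. Since $\w$ is the kernel of $(I-\Delta)^{-\ka/2}$ with $\ka>\d$, it is even and $C^{\ka-\d}$, so $\nab\w$ is continuous with $\nab\w(0)=0$; hence, in contrast with the $\g$-term, there is no diagonal to excise and the diagonal contributes nothing to $\tfrac{d}{dt}\Phi$. Using the distributional continuity equation $\tfrac{d}{dt}\bigl(\tfrac1N\sum_i\delta_{x_i^t}\bigr)=-\div\bigl(\tfrac1N\sum_i v_i^t\delta_{x_i^t}\bigr)$, the identity $\p_t(\mu_V+\vep^2\Uu)=\vep^2\p_t\Uu$, and the symmetry of $\w$, I expect to obtain
\[
\frac{d}{dt}\Phi=\frac{1}{\vep^2N}\sum_{i=1}^N v_i^t\cdot\nab\tl{h}^{f^t}(x_i^t)-\int_{\T^\d}\tl{h}^{\p_t\Uu^t}\,df^t .
\]
I would then split $v_i=(v_i-u(x_i))+u(x_i)$, write $\tfrac1N\sum_i u(x_i)\cdot\nab\tl{h}^f(x_i)=\int u\cdot\nab\tl{h}^f\,df+\int u\cdot\nab\tl{h}^f\,d(\mu_V+\vep^2\Uu)$, and integrate by parts in the last integral: its $\mu_V$-part vanishes by $\div(\mu_V u)=0$, while its $\vep^2\Uu$-part equals $-\vep^2\int\div\tl{h}^{u\Uu}\,df$ because $\tl{h}^{u\Uu}=\w\ast(u\Uu)$ and convolution commutes with $\div$. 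Collecting terms should give exactly the third line of \eqref{eq:MEsmfIDreg}.

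Adding the three contributions and using the cancellations above would complete the proof; no further algebra is needed. The step I expect to demand the most care — vigilance rather than real difficulty — is tracking the double time-dependence of $f^t$, through the moving particles and through the background correction $\vep^2\Uu^t$, so that $\tfrac{d}{dt}$ of each quadratic form $\int\g\,d(f^t)^{\otimes 2}$ and $\int\w\,d(f^t)^{\otimes 2}$ picks up both a transport term and a $\p_t\Uu$ term; once that is handled consistently — and keeping in mind that $h^{\Uu}=p$ is forced precisely because the Lake equation holds on all of $\T^\d$ — everything goes through. All manipulations are justified by the standing regularity hypotheses on $u,\mu_V$ and by $\nab\g$ being $C^1$, which follows from \eqref{eq:nabgHka}.
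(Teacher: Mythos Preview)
Your proposal is correct and follows essentially the same route as the paper: the paper computes the time derivative of the new $\w$-term exactly as you describe (split $v_i=(v_i-u(x_i))+u(x_i)$, kill the $\mu_V$-part via $\div(\mu_V u)=0$, and turn the $\vep^2\Uu$-part into $-\vep^2\int\div\tl{h}^{u\Uu}\,df$), and then simply invokes the computation of \cref{lem:MEsmfID} for the remaining two terms. You are in fact more explicit than the paper about how the $\zeta$-terms disappear and about the $\vep^2$ bookkeeping in the $\p_t\Uu$ contribution.
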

where  we abbreviate $f^t\coloneqq \mu_N^t - \mu_V-\vep^2\Uu^t$.
\begin{proof}
{We omit the time superscripts in the computations below.}

By the product rule and equation \eqref{eq:NewODE} for $\dot{x}_i$, we find
\begin{align}
&\frac12\frac{d}{dt}\int_{(\T^\d)^2}\w(x-y)df^{\otimes 2}(x,y)\nn\\
&= \frac1N\sum_{i=1}^N\int_{(\T^\d)^2}v_i\cdot \nab\w(x_i-y)df(y) - \int_{(\T^\d)^2} \p_t\Uu(x) \w(x-y)df(y)\nn\\
&=\frac1N\sum_{i=1}^N (v_i-u(x_i))\cdot (\nab\tl{h}^{f})(x_i)  + \int_{\T^\d} u\cdot\nab\tl{h}^f df + \int_{\T^\d}u\cdot\nab\tl{h}^f d\mu_V\nn\\
&\ph   - \int_{\T^\d} \div \tl{h}^{u\Uu}df -\int_{\T^\d}\tl{h}^{\p_t\Uu} df . \label{eq:dtwff}
\end{align}
Since $\div(\mu_V u) = 0$, Fubini-Tonelli and integration by parts in $x$ reveal that the last term on the third line equals zero. 
Combining with \cref{lem:MEsmfID}, we arrive at the desired \eqref{eq:MEsmfIDreg}. 
\end{proof}

We now estimate each term on the right-hand side of \eqref{eq:MEsmfIDreg} and establish a Gr\"onwall relation to prove \cref{thm:mainSMFrigreg}.

\begin{proof}[Proof of \cref{thm:mainSMFrigreg}]

The only terms on the right-hand side of \eqref{eq:MEsmfIDreg} that require modification in estimating, compared to as in \cref{ssec:MPgron}, are the third through sixth.

{\bf The third term:} Instead of using the commutator estimate of \cref{prop:comm}, we argue as follows. Desymmetrizing,
\begin{align}
\frac{1}{2\vep^2}\int_{(\T^\d)^2\setminus\triangle} \pa*{u(x)-u(y)}\cdot\nabla\g(x-y)df^{\otimes 2}(x,y)  
=\frac{1}{\vep^2}\int_{\T^\d}u\cdot \nab h^{f}d f.
\end{align}
By the fractional Leibniz rule and our assumption {$\ka>\d$}, the function $u\cdot\nab h^f \in H^{\frac{\ka}{2}}(\T^\d)$ and
\begin{align}
\|u\cdot\nab h^f\|_{H^{\frac{\ka}{2}}} &\le C\Big(\|u\|_{H^{\frac{\ka}{2}}} \|\nab h^f\|_{L^\infty} + \|u\|_{L^\infty} \|\nab h^f\|_{H^{\frac{\ka}{2}}}\Big) \nn\\
&\le C\Big(\|u\|_{H^{\frac{\ka}{2}}} + \|u\|_{L^\infty}\Big) \|\nab h^f\|_{H^{\frac{\ka}{2}}} \nn\\
&\le C\|u\|_{H^{\frac{\ka}{2}}}  \|f\|_{H^{-\frac{\ka}{2}}},
\end{align}
where the penultimate line follows from Sobolev embedding and the ultimate line follows from the assumption \eqref{eq:nabgHka} for $\g$ and another application of Sobolev embedding. The constant $C>0$ depends only on $\d,\ka$ and the constant in \eqref{eq:nabgHka}. 
It now follows from the $(H^{\frac{\ka}{2}})^* \cong H^{-\frac{\ka}{2}}$ duality that
\begin{align}\label{eq:Gronreg1}
\Big|\frac{1}{\vep^2}\int_{\T^\d}u\cdot \nab h^{f}d f\Big| \le \frac{C}{\vep^2}\|u\|_{H^\frac\ka2}\|f\|_{H^{-\frac{\ka}2}}^2.
\end{align}

{\bf The fourth term:} Instead of the coercivity estimate of \cref{lem:MPEcoer}, we use the $(H^{\frac{\ka}{2}})^* \cong H^{-\frac{\ka}{2}}$ duality, the assumption \eqref{eq:nabgHka} for $\g$, and the triangle inequality to estimate
\begin{align}
\Big|\int_{\T^\d}\pa*{\div h^{u\Uu}+h^{\p_t\Uu}}df\Big| &\le C\|f\|_{H^{-\frac{\ka}{2}}} \|\div h^{u\Uu}+h^{\p_t\Uu}\|_{H^{\frac{\ka}{2}}} \nn\\
&\le  C\|f\|_{H^{-\frac{\ka}{2}}} \Big( \|u\Uu\|_{H^{-\frac{\ka}{2}}} + \|(-\Delta)^{-\frac{1}{2}}\p_t \Uu\|_{H^{-\frac{\ka}{2}}}\Big) \nn\\
&\le \frac{C}{\vep^2}\|f\|_{H^{-\frac{\ka}{2}}}^2 + \vep^2 \Big( \|u\Uu\|_{H^{-\frac{\ka}{2}}} + \|(-\Delta)^{-\frac{1}{2}}\p_t \Uu\|_{H^{-\frac{\ka}{2}}}\Big)^2,  \label{eq:Gronreg1'}
\end{align}
where the final line follows from the elementary inequality $ab\le \frac12(\vep^{-2}a + \vep^2b)$. 
 
{\bf The fifth term:} By Cauchy-Schwarz in $i$,
\begin{align}
\frac1N\sum_{i=1}^N \Big|(v_i-u(x_i))\cdot(\nab\tl{h}^f)(x_i)\Big| &\le \Big(\frac1N\sum_{i=1}^N |v_i-u(x_i)|^2\Big)^{1/2} \Big(\frac1N\sum_{i=1}^N |(\nab\tl{h}^f)(x_i)|^2\Big)^{1/2} \nn\\
&\le \|\nab\tl{h}^{f}\|_{L^\infty} \Big(\frac1N\sum_{i=1}^N |v_i-u(x_i)|^2\Big)^{1/2} \nn\\
&\le C\|f\|_{H^{-\frac{\ka}{2}}}\Big(\frac1N\sum_{i=1}^N |v_i-u(x_i)|^2\Big)^{1/2} \nn\\
&\le \frac{C^2}{2}\|f\|_{H^{-\frac{\ka}{2}}}^2 + \frac{1}{2N}\sum_{i=1}^N |v_i-u(x_i)|^2, \label{eq:Gronreg2}
\end{align}
where the penultimate line follows from Sobolev embedding (by our assumption {$\ka>\d+2$}) and \eqref{eq:welec} and the ultimate line by $ab\le \frac12(a^2+b^2)$.

{\bf The sixth term:} We write
\begin{align}
\int_{\T^\d}u\cdot \nab \tl{h}^f df  &= \int_{\T^\d}u\cdot \nab\tl{h}^{f} (I-\Delta)^{\frac{\ka}{2}}\tl{h}^{f} \nn\\
&=\int_{\T^\d}\Big[(I-\Delta)^{\frac{\ka}{4}}(u\cdot\nab\tl{h}^{f}) - u\cdot\nab(I-\Delta)^{\frac{\ka}{4}}\tl{h}^{f}\Big](I-\Delta)^{\frac{\ka}{4}}\tl{h}^{f}\nn\\
&\ph + \int_{\T^\d}u\cdot\nab(I-\Delta)^{\frac{\ka}{4}}\tl{h}^{f} (I-\Delta)^{\frac{\ka}{4}}\tl{h}^{f}, \label{eq:gradwmuNmu}
\end{align}
where the final line follows from integrating by parts $(I-\Delta)^{\frac{\ka}{4}}$. To estimate the last line of \eqref{eq:gradwmuNmu}, we use the product rule to write $\nab(I-\Delta)^{\frac{\ka}{4}}\tl{h}^{f} (I-\Delta)^{\frac{\ka}{4}}\tl{h}^{f} = \frac12\nab\Big((I-\Delta)^{\frac{\ka}{4}}\tl{h}^{f}\Big)^2$ and then integrate by parts, leading to the final bound
\begin{align}\label{eq:gradwmuNmu1}
\Big|\int_{\T^\d}u\cdot\nab(I-\Delta)^{\frac{\ka}{4}}\tl{h}^{f} (I-\Delta)^{\frac{\ka}{4}}\tl{h}^{f}\Big| \le \frac12\|\div u\|_{L^\infty} \|(I-\Delta)^{\frac{\ka}{4}}\tl{h}^{f}\|_{L^2}^2 = \frac12\|\div u\|_{L^\infty} \|f\|_{H^{-\frac\ka2}}^2.
\end{align}
For the penultimate line of \eqref{eq:gradwmuNmu}, we recognize that the expression contained in the brackets is a commutator of Kato-Ponce type \cite{KP1988}. Applying Cauchy-Schwarz and \cite[Theorem 1.1]{Li2019} with $s=\frac{\ka}{2}$, $f= u$, $g=\nab\tl{h}^{f}$, and $p=2$, we obtain that
\begin{align}
&\int_{\T^\d}\Big|\Big[(I-\Delta)^{\frac{\ka}{4}}(u\cdot\nab\tl{h}^f) - u\cdot\nab(I-\Delta)^{\frac{\ka}{4}}\tl{h}^{f}\Big](I-\Delta)^{\frac{\ka}{4}}\tl{h}^{f}\Big| \nn\\
&\le \|(I-\Delta)^{\frac{\ka}{4}}(u\cdot\nab\tl{h}^{f}) - u\cdot\nab(I-\Delta)^{\frac{\ka}{4}}\tl{h}^{f}\|_{L^2}\|(I-\Delta)^{\frac{\ka}{4}}\tl{h}^{f}\|_{L^2} \nn\\
&\le C\Big(\|(I-\Delta)^{\frac{\ka}{4}}\nab u\|_{L^2} \|\nab\tl{h}^{f}\|_{L^\infty} + \|\nab u\|_{L^\infty}\|(I-\Delta)^{\frac{\ka-2}{4}}\tl{h}^{f}\|_{L^2} \Big)\|(I-\Delta)^{\frac{\ka}{4}}\tl{h}^{f}\|_{L^2} \nn\\
&\le C\|\nab u\|_{H^{\frac\ka2}} \|f\|_{H^{-\frac\ka2}}^2, \label{eq:gradwmuNmu2}
\end{align}
where in the final line, we have also used Sobolev embedding on $\|\nab\tl{h}^{f}\|_{L^\infty}$ and $\|\nab u\|_{L^\infty}$ and our assumption {$\ka>\d+2$}. 
Combining \eqref{eq:gradwmuNmu}, \eqref{eq:gradwmuNmu1}, \eqref{eq:gradwmuNmu2}, we conclude that
\begin{align}
\Big|\int_{\T^\d}u\cdot \nab\tl{h}^f  df\Big| \le C\|\nab u\|_{H^{\frac\ka2}} \|f\|_{H^{-\frac{\ka}{2}}}^2. \label{eq:Gronreg3}
\end{align}

{\bf The seventh term:} This term is analogous to the fourth term, and we ultimately find that
\begin{align}\label{eq:Gronreg4}
 \Big|\int_{\T^\d}\pa*{\div \tl{h}^{u\Uu}+\tl{h}^{\p_t\Uu}}df\Big| \le  \frac{C}{\vep^2}\|f\|_{H^{-\frac{\ka}{2}}}^2 + \vep^2 \Big( \|u\Uu\|_{H^{-\frac{\ka}{2}}} + \|(-\Delta)^{-\frac{1}{2}}\p_t \Uu\|_{H^{-\frac{\ka}{2}}}\Big)^2
\end{align}

Collecting the estimates \eqref{eq:Gronreg1}, \eqref{eq:Gronreg1'}, \eqref{eq:Gronreg2}, \eqref{eq:Gronreg3}, \eqref{eq:Gronreg4} and combining with the estimates for the remaining terms on the right-hand side of \eqref{eq:MEsmfIDreg} previously shown in \cref{ssec:MPgron}, we obtain that
\begin{multline}
\frac{d}{dt}\Hr_{N}(\uz_N,u) \le \frac{(\| (\nab u)_{sym,-} \|_{L^\infty}+\frac12-\ga)}{N}\sum_{i=1}^N |v_i-u(x_i)|^2 + \frac{C}{\vep^2}\|u\|_{H^{\frac\ka2}}\|f\|_{H^{-\frac{\ka}2}}^2 \\
+ \frac{C}{\vep^2}\|f\|_{H^{-\frac{\ka}{2}}}^2 + \vep^2 \Big( \|u\Uu\|_{H^{-\frac{\ka}{2}}} + \|(-\Delta)^{-\frac{1}{2}}\p_t \Uu\|_{H^{-\frac{\ka}{2}}}\Big)^2 + C\|f\|_{H^{-\frac{\ka}{2}}}^2 + \frac{C}{\vep^2}\|\nab u\|_{H^{\frac\ka2}}\|f\|_{H^{-\frac\ka2}}^2.
\end{multline}
Integrating and appealing to the Gr\"{o}nwall-Bellman lemma similar to before, we arrive at the desired \eqref{eq:SMFrigreg}.
\end{proof}

\subsection{Applicability to mean-field limits}\label{ssec:appregMF}

As mentioned above, this idea of adding an extra term (under the same assumption that $\ka>\d+2$ is determined by \eqref{eq:nabgHka}) to the modulated potential energy also works to prove mean-field limits (in the whole space or on the torus) of second-order monokinetic systems (i.e. \eqref{eq:NewODE} with $\vep=1$) and first-order systems
\begin{align}\label{eq:ODEfo}
\dot{x}_i^t = -\nab V(x_i^t) + \frac1N\sum_{1\le j\le N: j\ne i}\M\nab\g(x_i^t-x_j^t)
\end{align}
for a matrix $\M$ satisfying $\M\xi\cdot\xi\le 0$.

In the second-order case, one considers the total modulated energy
\begin{multline}
\mathsf{H}_N(Z_N^t,(\mu^t,u^t)) \coloneqq \frac{1}{2N}\sum_{i=1}^N |v_i^t-u^t(x_i^t)|^2 + \Fr_N(\ux_N^t,\mu^t)  \\
+\frac{1}{2}\int_{(\T^\d)^2}\w(x-y)d(\mu_N^t-\mu^t)^{\otimes 2}(x,y),
\end{multline}
where $Z_N^t$ is a solution of \eqref{eq:NewODE} with $\vep=1$ and $(\mu^t,u^t)$ is a monokinetic solution of the Vlasov equation, i.e. $f^t(x,v) = \mu^t(x)\delta(v-u^t(x))$ is a weak solution of the Vlasov equation. Here, we have re-inserted the diagonal (which is just a constant) in the definition of $\Fr_N$ with an abuse of notation. One can repeat the calculations in \cite[Appendix]{Serfaty2020} for this total modulated energy. The only difference is the new term (cf. \eqref{eq:dtwff})
\begin{align}
&\frac12\frac{d}{dt}\int_{(\T^\d)^2}\w(x-y)d(\mu_N^t - \mu^t)^{\otimes 2}(x,y) \nn\\
&= \frac1N\sum_{i=1}^N\int_{\T^\d}v_i^t\cdot\nab\w(x_i^t-y)d(\mu_N^t-\mu^t)(y) - \int_{\T^\d}u^t(x)\cdot\nab\w(x-y)d(\mu_N^t-\mu^t)(y)d\mu^t(x) \nn\\
&= \frac1N\sum_{i=1}^N(v_i^t-u^t(x_i^t)) \cdot \nab\tl{h}^{\mu_N^t-\mu^t}(x_i^t) + \int_{\T^\d}u^t\cdot\nab\tl{h}^{\mu_N^t-\mu^t} d(\mu_N^t-\mu^t).
\end{align}
One can then estimate all the terms similar to in the previous subsection, in particular replacing the commutator estimate used for Coulomb/Riesz interactions by the duality argument. 

In the first-order case, the kinetic term disappears and one instead considers the total modulated energy
\begin{align}
\mathsf{H}_N(X_N^t,\mu^t) \coloneqq \Fr_N(\XN^t,\mu^t) + \frac12\int_{(\T^\d)^2}\w(x-y)d(\mu_N^t-\mu^t)^{\otimes 2}(x,y),
\end{align}
where $X_N^t$ is a solution of \eqref{eq:ODEfo} and $\mu^t$ is the solution of the mean-field PDE associated to \eqref{eq:ODEfo}.  Repeating the calculations in \cite{Serfaty2020},  the only difference is the new term (assuming for simplicity that $\nab\g(0)=0$)
\begin{align}
&\frac12\frac{d}{dt}\int_{(\T^\d)^2}\w(x-y)d(\mu_N^t - \mu^t)^{\otimes 2}(x,y) \nn\\
&=\frac1N\sum_{i=1}^N \int_{\T^\d}\Big(-\nab V(x_i^t) + \frac1N\sum_{j=1}^N\M\nab\g(x_i^t-x_j^t)\Big)\cdot \nab \w(x_i^t-y)d(\mu_N^t-\mu^t)(y) \nn\\
&\ph+ \int_{\T^\d}\Big(\nab V - \M\nab(\g\ast\mu^t)\Big)\cdot\nab\w(x-y)d(\mu_N^t-\mu^t)(y)d\mu^t(x) \nn\\
&= -\int_{\T^\d}\nab V\cdot \tl{h}^{\mu_N^t-\mu^t}d(\mu_N^t-\mu^t) +\frac1N\sum_{i=1}^N \M\nab\g\ast(\mu_N^t-\mu^t)(x_i^t) \cdot \nab\tl{h}^{\mu_N^t-\mu^t}(x_i^t) \nn\\
&\ph+ \int_{\T^\d}\M\nab(\g\ast \mu^t)\cdot \nab\tl{h}^{\mu_N^t-\mu^t} d(\mu_N^t-\mu^t).
\end{align}
The first and third terms on the right-hand side of the second equality may be handled by the same duality argument used in the previous subsection. For the second term, we argue
\begin{align}
\frac1N\sum_{i=1}^N |\M\nab\g\ast(\mu_N^t-\mu^t)(x_i^t) \cdot \nab\tl{h}^{\mu_N^t-\mu^t}(x_i^t)| &\le C|\M|\|\nab\g\ast(\mu_N^t-\mu^t)\|_{L^\infty}\|\nab\tl{h}^{\mu_N^t-\mu^t}\|_{L^\infty} \nn\\
&\le  C|\M| \|\nab\g\ast(\mu_N^t-\mu^t)\|_{H^\frac\ka2} \|\mu_N^t-\mu^t\|_{H^{-\frac{\ka}2}},
\end{align}
where the final line is by Sobolev embedding and the assumption $\ka>\d+2$. The first factor on the last line may be bounded in terms of $\|\mu_N^t-\mu^t\|_{H^{-\frac\ka2}}$  using the assumption \eqref{eq:nabgHka} on $\g$.

\section{Regularity for the obstacle problem}\label{sec:appOP}
In this section, we review some classical regularity {and lift-off rates (which are intimately connected)} for solutions of the obstacle problem near the free boundary, as well as some new results that are used in the proof of \cref{lem:ugradVext}. 

Throughout this subsection, $u$ will denote either a solution of the \emph{local obstacle problem}
\begin{align}\label{eq:locOP}
\min\{(-\Delta)^s u, u-\psi\} = 0 \quad \text{in} \ B_1\subset\R^\d
\end{align}
or the \emph{global obstacle problem}
\begin{align}\label{eq:globOP}
\min\{(-\Delta)^s u, u-\psi\} = 0 \quad \text{in} \ \R^\d.
\end{align}
In all cases, $s\in (0,1)$. 

The optimal regularity of solutions was first established by Caffarelli {et al.} in \cite{CSS2008} for $\psi\in C^{2,1}$ obstacles. The regularity assumption on the obstacle was subsequently improved to $\psi\in C^{1,s+\delta}$ in \cite{CdSS2017}. In the same work \cite{CSS2008}, the regularity of free boundaries was established and then later improved by various authors in \cite{CdSS2017,rOS2017}. We summarize these results with the following proposition.

\begin{prop}\label{prop:OPreg}
Let $\psi \in C^{1,s+\delta}(B_1)$, for some $\delta>0$, and $u$ be any viscosity solution of \eqref{eq:locOP}. Then $u\in C^{1,s}(B_{1/2})$.
Suppose now that $\psi \in C^{1+2s+\delta}$ for some $\delta>0$ and $u$ is a solution of \eqref{eq:locOP}. Then for every free boundary point $x_*\in \p\{u>\psi\} \cap B_{1/2}$, we have the following dichotomy:
\begin{enumerate}[(i)]
\item\label{item:regpt} $x_*$ is a \emph{regular point},
\begin{align}\label{eq:regptasy}
(u-\psi)(x) = c_{x_*}d^{1+s}(x) +  O(|x-x_*|^{1+s+\al}),
\end{align}
with $c_{x_*}>0$, where $d$ is the distance to the contact set $\{u=\psi\}$. In which case, the free boundary is $C^{1,\al}$ in a neighborhood of $x_*$.
\item\label{item:singpt} $x_*$ is a \emph{degenerate/singular point},
\begin{align}
(u-\psi)(x) =  O(|x-x_*|^{1+s+\al}).
\end{align}
\end{enumerate}
\end{prop}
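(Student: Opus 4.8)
The plan is to assemble, paying attention to the exact regularity class required of the obstacle, the now-classical free-boundary analysis of \cite{CSS2008} together with its refinements in \cite{CdSS2017,rOS2017,BFRo2018}. First I would reduce \eqref{eq:locOP} to a thin (Signorini-type) obstacle problem via the Caffarelli-Silvestre extension, realizing $u$ as the trace on $\R^\d\times\{0\}$ of a function $U$ on the half-space that solves $\div(y^{1-2s}\nabla U)=0$, so that $(-\Delta)^s u$ becomes, up to a dimensional constant, the weighted conormal derivative of $U$ along $\{y=0\}$. Subtracting the obstacle and exploiting the semiconvexity estimates this subtraction makes available (this is where the hypothesis $\psi\in C^{1,s+\delta}$ enters), one shows that $u-\psi$ grows no faster than $d^{1+s}$ away from the contact set, which bootstraps to $u\in C^{1,s}_{\mathrm{loc}}(B_{1/2})$ and, in particular, $\nabla(u-\psi)\equiv 0$ on $\{u=\psi\}$; this is the content of \cite{CSS2008} for $C^{2,1}$ obstacles and of \cite{CdSS2017} in the sharpened regularity class. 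Since in our intended application $\psi=c-V$ has the stated regularity only in a neighborhood of $\Sigma$, but $\Sigma$ is bounded, one simply runs this argument in finitely many balls covering such a neighborhood.

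To obtain the dichotomy at a free-boundary point $x_*\in\p\{u>\psi\}\cap B_{1/2}$, I would carry out a blow-up analysis: consider the rescalings $z\mapsto(u-\psi)(x_*+rz)/r^{1+s}$, or the Almgren rescalings normalized by the $L^2$-mass on spheres in the extension variables, and invoke monotonicity of the Almgren frequency function $N(r,x_*,U)$ of the extension. Its limit $\kappa:=N(0^{+})$ then exists; every blow-up is a global $\kappa$-homogeneous solution of the extension obstacle problem; and a Liouville-type classification forces $\kappa=1+s$ at points where the contact set has positive density and $\kappa\ge 2$ at points where it has zero density. Using the strict sign $\D\psi\le -c_0<0$ near $\{u=\psi\}$ (furnished in our setting by assumption \ref{assVLapl}, since $\psi=c-V$), one shows in addition that these two cases exhaust the free boundary, recovering the alternative of \cite{CSS2008,BFRo2018}.

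In the regular case $\kappa=1+s$, the blow-up is unique up to rotation and equals the explicit half-space profile, from which one reads off both the leading term $c_{x_*}d^{1+s}$ in \eqref{eq:regptasy} and the strict positivity $c_{x_*}>0$ (nondegeneracy). The $C^{1,\al}$ regularity of $\p\{u=\psi\}$ near $x_*$ follows from an improvement-of-flatness / boundary-Harnack argument controlling the direction of the blow-up, and the sharp remainder $O(|x-x_*|^{1+s+\al})$ is then obtained by combining this $C^{1,\al}$ free-boundary regularity with the higher-order boundary regularity of the extension problem in $C^{1,\al}$ domains \cite{rOS2017}. In the singular case $\kappa\ge 2$, Weiss- and Monneau-type monotonicity formulas give uniqueness of the (polynomial) blow-up, and hence the estimate $(u-\psi)(x)=O(|x-x_*|^{1+s+\al})$ of case \eqref{item:singpt}.

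The main obstacle is not the interior $C^{1,s}$ bound, which is by now routine, but the rigidity underlying the second step: excluding free-boundary points with Almgren frequency strictly between $1+s$ and $2$ and controlling the associated blow-ups, which is exactly where the strict sign of $\D\psi$ (equivalently $\D V>0$, our \ref{assVLapl}) is indispensable, following \cite{BFRo2018}; a secondary technical point is the higher-order boundary regularity feeding the sharp expansion in \eqref{eq:regptasy}. Since all of these ingredients are available in the cited literature under the stated hypotheses on $\psi$, the proof amounts to invoking them in the correct order.
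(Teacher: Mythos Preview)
The paper does not prove this proposition at all: it is stated purely as a summary of known results, with the sentence immediately preceding it reading ``We summarize these results with the following proposition,'' and the relevant references being \cite{CSS2008,CdSS2017,rOS2017}. So there is no paper-proof against which to compare.

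Your sketch is a faithful outline of how those references actually proceed (extension, Almgren frequency, blow-up classification, improvement of flatness / boundary Harnack), and in that sense is correct. One point worth flagging: you invoke assumption \ref{assVLapl} ($\Delta\psi\le -c_0<0$) to rule out intermediate frequencies and make the regular/singular alternative exhaustive, but that hypothesis is \emph{not} part of the statement of \cref{prop:OPreg}. The paper itself acknowledges this gap elsewhere (see \cref{ssec:EMOPop}: ``In principle, the union of regular points and singular points does not exhaust the free boundary; but \cite{BFRo2018} shows that such points are the only two possibilities, under the assumption that $\D\varphi\leq -c< 0$''). So either the dichotomy in \cref{prop:OPreg} should be read as a definition/classification rather than an exhaustion claim, or the proposition is implicitly carrying an extra hypothesis. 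Your instinct to bring in \ref{assVLapl} is right for the paper's eventual application, but strictly speaking it oversteps what the proposition as written assumes.
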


\begin{remark}
Note that 
\begin{align}
\lim_{\substack{x\rightarrow x_* \\ x\in B_1}} \frac{u-\psi}{d^{1+s}},
\end{align}
always exists and is strictly positive if $x_*$ is a regular point and zero if $x_*$ is a degenerate/singular point.
\end{remark}

\begin{remark}\label{rem:OPreg}
One actually has the quantitative local $C^{1,s}$ bound
\begin{align}\label{eq:OPreg}
\|u\|_{C^{1,s}(B_{1/2})} \le C(\|\psi\|_{C^{1,s+\delta}(B_1)} + \|u\|_{L^\infty(\R^\d)}),
\end{align}
where the constant $C>0$ depends only on $\d,s,\delta$. In fact, the dependence on $\|u\|_{L^\infty(\R^\d)}<\infty$ can be relaxed to a local estimate $L^\infty$ estimate for the $s$-harmonic extension $\tl{u}$ in $\R^{\d+1}$ of $u$ in the form of $\|\tl{u}\|_{L^\infty(\tl{B}_1)}$, where $\tl{B}_1$ is the ball of radius $1$ in $\R^{\d+1}$. This is useful because  $u=h^{\mu_V}\notin L^\infty(\R^\d)$ if $\d=1$ and $\s\le 0$, as it grows in magnitude like $|x|^{-\s}$ at infinity. However, its $(\frac{\d-\s}{2})$-harmonic extension is given by simply extending $h^{\mu_V}$ to $\R^{\d+1}$ through the radial symmetry of $\g$, and this is in $L_{loc}^\infty(\R^{\d+1})$, assuming, say, that $\mu_V$ is bounded.
\end{remark}


We next reproduce below two results adapted from \cite{ArO2020}. The first is a sharp (higher) regularity estimate for regular points of the free boundary, which is adapted from \cite[Theorem 1.2]{ArO2020}. The second is a boundary Schauder-type estimate for solutions of nonlocal elliptic equations in $C^{k,\al}$ domains, which is adapted from \cite[Theorem 1.4]{ArO2020}.

\begin{prop}\label{lem:ArO1}
Let $u$ be any solution of \eqref{eq:globOP} with obstacle $\psi$ such that $\{\psi>0\}$ is bounded. Let $\theta>2$ be such that both  $\theta,\theta\pm s$ are noninteger, and assume that $\psi \in C^{\theta+s}$. Then the free boundary is $C^\theta$ in a neighborhood $B_r(x_*)$ of any regular point $x_* \in \p\{u>\psi\}$ and the $C^\theta$  norm only depends on $\d,s,\ga,\theta,r$. 
\end{prop}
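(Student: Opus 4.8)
The plan is to deduce \cref{lem:ArO1} from the higher-order free boundary regularity theory of Abatangelo and Ros-Oton, specifically \cite[Theorem 1.2]{ArO2020} (whose proof rests on the boundary Schauder estimates \cite[Theorem 1.4]{ArO2020}), after a preliminary localization. First, since $u\ge\psi$ and $\{\psi>0\}$ is bounded, the contact set $\{u=\psi\}$ is contained in $\{\psi>0\}$ and hence bounded; fixing $R>0$ with $\overline{\{\psi>0\}}\subset B_{R/2}$, the restriction of $u$ to $B_R$ solves the local obstacle problem \eqref{eq:locOP} with $B_1$ replaced by $B_R$, and every free boundary point of $u$ lies in $B_{R/2}$. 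After rescaling $B_R$ to the unit ball---which only affects constants by powers of $R$---it suffices to work with the local problem. This localization is also what circumvents the fact that $\psi=c-V$, and (when $\d=1$, $\s\le 0$) also $u=h^{\mu_V}$, need not be globally bounded; compare \cref{rem:OPreg}.

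Next, I would check that the hypotheses put us in the setting of \cref{prop:OPreg}: since $\theta>2$, the assumption $\psi\in C^{\theta+s}$ implies $\psi\in C^{1+2s+\delta}$ for some sufficiently small $\delta>0$, so at the regular point $x_*$ the free boundary is already $C^{1,\al}$ in a neighborhood and the nondegenerate expansion \eqref{eq:regptasy} holds with $c_{x_*}>0$. This $C^{1,\al}$ regularity together with the nondegeneracy is precisely the base case from which the bootstrap in \cite{ArO2020} proceeds.

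Then I would invoke \cite[Theorem 1.2]{ArO2020}: starting from a $C^{1,\al}$ free boundary at a regular point and an obstacle $\psi\in C^{\theta+s}$ with $\theta,\theta\pm s\notin\Z$, it yields that the free boundary is of class $C^\theta$ in a neighborhood $B_r(x_*)$. The noninteger conditions on $\theta$ and $\theta\pm s$ are exactly what is needed for the iterated nonlocal Schauder estimates of \cite[Theorem 1.4]{ArO2020} in $C^{k,\al}$ domains to close without logarithmic losses. Tracking the quantitative constants in the cited statements through the rescaling of the first step, together with the finiteness of $\|\psi\|_{C^{\theta+s}}$ on the relevant bounded set, yields the stated dependence of the $C^\theta$ bound on $\d,s,\ga,\theta,r$.

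The part requiring the most care is the bookkeeping needed to align the two frameworks: \cite{ArO2020} is phrased via the Caffarelli--Silvestre extension and a thin/Signorini obstacle formulation, so one must translate between that and the fractional-Laplacian formulation used here, verify that the cited results apply verbatim on $B_R$ after rescaling, and confirm that the noninteger constraints on $\theta$ and $\theta\pm s$ are preserved at each stage of the iteration. Once this matching is carried out, the conclusion follows directly from the cited theorems.
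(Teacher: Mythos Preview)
Your proposal is correct and matches the paper's approach: the paper does not give an independent proof of this proposition but simply records it as ``adapted from \cite[Theorem 1.2]{ArO2020},'' which is exactly the result you invoke. Your additional discussion of the localization step and hypothesis-checking is a reasonable elaboration of what ``adapted'' means here, but the paper itself omits these details entirely.
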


\begin{prop}\label{lem:ArO2}
Let $\be>s$ be such that both $\be, \be\pm s$ are noninteger. Let $\Omega\subset\R^\d$ be a bounded $C^{\be+1}$ domain. Then there exists a constant $C>0$ depending only on $\d,s,\be,\Omega$, such that for any solution $u\in L^\infty(\R^\d)$ of
\begin{align}\label{eq:SchaBVP}
\begin{cases}
(-\Delta)^s u = f & \quad \text{in} \ \Omega\cap B_1\\
u = 0 & \quad \text{in} \ B_1\setminus\Omega
\end{cases}
\end{align}
with $f\in C^{\be-s}(\ol\Omega)$, it holds that
\begin{align}\label{eq:ArO2}
\|\frac{u}{d^s}\|_{C^\be({\Omega}\cap B_1)} \le C(\|f\|_{C^{\be-s}(\ol\Omega)} + \|u\|_{L^\infty(\R^\d)}),
\end{align}
{where $d(x)\coloneqq \dist(x,\Omega^c)$.}
\end{prop}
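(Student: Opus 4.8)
The statement is essentially \cite[Theorem 1.4]{ArO2020}, the only genuine difference being that we formulate the estimate for the \emph{localized} boundary value problem \eqref{eq:SchaBVP} (equation posed only in $\Omega\cap B_1$, vanishing only in $B_1\setminus\Omega$), whereas the cited result is phrased for the global Dirichlet problem in $\Omega$. So the first step is a routine localization. Extend $f$ to a function in $C^{\be-s}(\R^\d)$ by Stein's theorem (using that $\Omega$ is at least Lipschitz), fix a cutoff $\eta\in C_c^\infty(B_1)$ with $\eta\equiv 1$ on $B_{1/2}$, and write $u=u_{\mathrm{in}}+u_{\mathrm{out}}$, where $u_{\mathrm{in}}$ is the (unique bounded) solution of the global problem $(-\Delta)^s u_{\mathrm{in}}=\eta f$ in $\Omega$, $u_{\mathrm{in}}=0$ in $\R^\d\setminus\Omega$, so that $\|u_{\mathrm{in}}\|_{L^\infty}\lesssim\|\eta f\|_{L^\infty}\lesssim\|f\|_{C^{\be-s}}$. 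Then $u_{\mathrm{out}}=u-u_{\mathrm{in}}$ solves $(-\Delta)^s u_{\mathrm{out}}=(1-\eta)f$ in $\Omega\cap B_1$ (which vanishes on $\Omega\cap B_{1/2}$) and $u_{\mathrm{out}}=0$ in $B_{1/2}\setminus\Omega$, with $\|u_{\mathrm{out}}\|_{L^\infty}\lesssim\|u\|_{L^\infty}+\|f\|_{C^{\be-s}}$. The piece $u_{\mathrm{in}}$ is exactly the setting of \cite[Theorem 1.4]{ArO2020}, giving $u_{\mathrm{in}}/d^s\in C^\be(\Omega\cap B_{1/2})$ with the stated bound; the piece $u_{\mathrm{out}}$ satisfies the \emph{homogeneous} boundary problem on $B_{1/2}$, for which the same boundary theory (with $f=0$) gives $u_{\mathrm{out}}/d^s\in C^\be(\Omega\cap B_{1/4})$ with the same type of bound. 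Covering $\ol\Omega\cap\ol{B_{1/2}}$ by finitely many balls of this kind, and using interior Schauder estimates for $(-\Delta)^s$ on the part where $d$ is bounded below (where $d^s$ is smooth), patches these into \eqref{eq:ArO2}.

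For completeness, here is the structure I would use to prove the boundary piece (i.e.\ \cite[Theorem 1.4]{ArO2020}) itself. Near a boundary point $x_0\in\p\Omega$, flatten $\p\Omega$ by a $C^{\be+1}$ diffeomorphism onto a neighborhood of the origin in $\{x_\d>0\}$; one derivative above $\be$ is precisely what allows this change of variables to cost no regularity. In the new coordinates, $(-\Delta)^s$ becomes a nonlocal operator $L$ with kernel comparable to $|x-y|^{-\d-2s}$ whose $x$-dependence is of class $C^{\be-s}$, acting on functions vanishing locally in $\{x_\d\le0\}$, and the right-hand side is transported to some $\tilde f\in C^{\be-s}$. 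The model is the constant-coefficient half-space problem $(-\Delta)^s U=g$ in $\{x_\d>0\}\cap B_1$, $U=0$ in $\{x_\d\le0\}$, for which one has the sharp expansion near a boundary point: $U$ is approximated to order $\be+s$ by $(x_\d)_+^s$ times a polynomial of degree $<\be$, with coefficients controlled by $\|g\|_{C^{\be-s}}$ and $\|U\|_{L^\infty}$ — this is where the explicit fundamental solution of $(-\Delta)^s$ in a half-space enters, and where non-integrality of $\be,\be\pm s$ is needed to rule out logarithmic corrections. One then runs a Campanato-type iteration at dyadic scales around each boundary point: subtract the best $(x_\d)_+^s$-times-polynomial approximant, estimate the new error via the model estimate together with the $O(r^{\be-s})$ oscillation of the frozen coefficients at scale $r$ and the $C^{\be-s}$ regularity of $\tilde f$, and sum the resulting geometric decay to conclude $u/d^s\in C^\be$ up to $\p\Omega$. (Alternatively one could argue via the Caffarelli–Silvestre extension, converting to a degenerate-elliptic problem in $\R^{\d+1}_+$ with $A_2$-weight $|y|^{1-2s}$ and applying weighted boundary Schauder estimates, but the kernel-based route is cleaner here.)

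The main obstacle is the sharp half-space model estimate and the exponent bookkeeping around it: one must cleanly split the contribution of the unknown near the boundary from its far-field tail — where only an $L^\infty$ bound is available — and show the tail contributes only a smooth, strictly lower-order term, all while tracking that the domain regularity $C^{\be+1}$, the right-hand side regularity $C^{\be-s}$, and the conclusion $C^\be$ for $u/d^s$ line up with no loss (this is exactly the role of the non-integrality hypotheses). Once this model estimate is established, the flattening, perturbation, summation, and gluing steps are standard.
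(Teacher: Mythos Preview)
The paper does not prove this proposition at all: it is stated without proof as a direct quotation of \cite[Theorem 1.4]{ArO2020} (see the sentence introducing the two propositions just above). So there is no ``paper's proof'' to compare against; your identification of the source is exactly what the paper does, and your sketch of the underlying argument in \cite{ArO2020} is additional content beyond the paper.

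One comment on your localization paragraph: your decomposition $u=u_{\mathrm{in}}+u_{\mathrm{out}}$ is not quite clean, because $u_{\mathrm{out}}$ need not vanish in $\Omega^c\setminus B_1$, so you cannot feed it back into the global result from \cite{ArO2020}. The paper handles precisely this kind of localization in the Remark immediately following the proposition, but by a different (and simpler) route: it multiplies $u$ itself by a cutoff $\chi$ supported in $B_1$, so that $\tilde u=\chi u$ vanishes identically outside $\Omega\cap B_1$ and solves $(-\Delta)^s\tilde u=f-(-\Delta)^s(\chi^c u)$ in $\Omega\cap B_{1/2}$, with the correction $(-\Delta)^s(\chi^c u)$ shown to be smooth in $B_{5/8}$ because $\chi^c u$ is supported away from there. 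This is the localization device actually used in the paper (for the slightly different purpose of relaxing $u\in L^\infty(\R^\d)$ to a weighted integrability condition), and it avoids the issue in your splitting.
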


\begin{remark}\label{rem:ArO2}
The boundedness assumption on $\Omega$ is not actually necessary, since we may always replace $\Omega$  by $B_2\cap \Omega$ without changing \eqref{eq:SchaBVP}. Similarly, the boundedness assumption on $\{\psi>0\}$ is not necessary because a solution $u$ of \eqref{eq:globOP} solves the local obstacle problem \eqref{eq:locOP} with an obstacle that is localized to vanish outside $B_2$.

Furthermore, the global assumption ${u\in L^\infty(\R^\d)}$ may be removed at the cost of shrinking the ball on the left-hand side {of \eqref{eq:ArO2}}. More precisely, let $\chi$ be a smooth cutoff which is identically one in $B_{3/4}$ and zero outside $B_1$. Set $\tl u = \chi u$. Then $\tl{u}$ solves
\begin{align}\label{eq:SchaBVP'}
\begin{cases}
(-\Delta)^s\tl{u} = \tl{f} & \quad \text{in} \ \Omega\cap B_{1/2}\\
\tl{u} = 0 & \quad \text{in} \ B_{1/2}\setminus\Omega
\end{cases}
\end{align}
with $\tl{f} = f - (-\Delta)^s(\chi^c u)$, where $\chi^c \coloneqq 1-\chi$. By \cref{lem:ArO2} (rescaling $B_{1/2}$ to $B_1$ and replacing $\Omega$ by $\Omega\cap B_{5/8}$), we have that
\begin{align}
\|\frac{u}{d^s}\|_{C^\be({\Omega}\cap B_{1/2})} &=  \|\frac{\tl{u}}{d^s}\|_{C^\be({\Omega}\cap B_{1/2})} \le C(\|\tl{f}\|_{C^{\be-s}(\ol\Omega)} + \|\tl{u}\|_{L^\infty(\R^\d)}) \nn\\
&\le C(\|f\|_{C^{\be-s}(\ol\Omega)} + \|(-\Delta)^s(\chi^c u)\|_{C^{\be-s}(\ol{\Omega \cap B_{5/8}})} + \|u\|_{L^\infty(B_1)}),
\end{align}
where we have also used the triangle inequality in the last line. Observe that $\chi^c \equiv 0$ in $B_{3/4}$. Hence, for $x\in \ol{\Omega \cap B_{5/8}}$,
\begin{align}\label{eq:Deltaschic}
(-\Delta)^s(\chi^c u)(x) = -c\int_{y\notin B_{3/4}} \frac{\chi^c(y)u(y)}{|x-y|^{\d+2s}}.
\end{align}
This function is evidently smooth, assuming that
\begin{align}
\|u\|_{L_{\d+2s}^1(\R^\d)} \coloneqq \int_{\R^\d}\frac{|u(y)|}{(1+|y|)^{\d+2s}}<\infty.
\end{align}
{This moment assumption is always satisfied for $u=h^{\mu_V}$ and $s=\frac{\d-\s}{2}$. Indeed, $h^{\mu_V}$ is controlled by $\|\mu_V\|_{L^\infty}$ if $\d\ge 2$ and $\s>\d-2\ge 0$. If $\d=1$ and $\s \in (-1,0]$, then $h^{\mu_V}$ grows like $|x|^{-\s}$, while $\d+2s = 2-\s$.}
\end{remark}

For the purposes of proving \cref{lem:ugradVext}, we need the following result for the tangential and normal derivatives of $u-\psi$, a proof of which does not seem present in the literature. This is an application of the preceding propositions.

\begin{prop}\label{prop:OPnt}
Let $\theta>2$ such that $\theta,\theta\pm s$ are not integers. Let $u$ be any solution of \eqref{eq:globOP} with obstacle $\psi \in C_{loc}^{\theta+s+\alpha}$, for any $\alpha>0$, such that $\|u\|_{L_{\d+1+2s}^1(\R^\d)}<\infty$. Then given any regular point $x_*$, there exist radii $0<r'<r$ such that the free boundary is $C^{\theta}$ in $B_r(x_*)$ and each point in $x\in B_{r'}(x_*)$ has a unique nearest-point projection {$x_\circ$ onto $\{u=\psi\}$ such that $x_\circ\in B_{r}(x_*)$.} Moreover, if $\nu_\circ,\tau_\circ$ are respectively normal and tangent directions at $x_\circ$, then there exists $C>0$ such that for any $x\in B_{r'}(x_*)$,
\begin{align}
|\p_{\nu_{\circ}}(u-\psi)(x)|&\le C d^s(x),  \label{eq:OPntn}\\
|\p_{\tau_\circ}(u-\psi)(x)| &\le Cd^{s+1}(x), \label{eq:OPntt}
\end{align}
where $C>0$ is independent of $\nu_\circ,\tau_\circ$. 
\end{prop}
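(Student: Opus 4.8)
The plan is to combine the higher free-boundary regularity (\cref{lem:ArO1}) with the boundary Schauder estimate (\cref{lem:ArO2}) applied to the \emph{derivative} of $u-\psi$, which solves a nonlocal equation in the (now smooth) noncontact region. First I would fix a regular point $x_*$; by \cref{lem:ArO1}, since $\psi\in C^{\theta+s+\alpha}\subset C^{\theta+s}$ and $\theta,\theta\pm s$ are noninteger, there is a radius $r>0$ such that the free boundary $\p\{u>\psi\}$ is a $C^\theta$ hypersurface inside $B_r(x_*)$. Choosing $r'<r$ small enough (comparable to $r$ and to the $C^\theta$ bound of the free boundary) guarantees that the tubular-neighborhood/nearest-point projection map $x\mapsto x_\circ$ is well-defined and $C^{\theta-1}$ on $B_{r'}(x_*)$, with $x_\circ\in B_r(x_*)$ and $d(x)=|x-x_\circ|$ the distance to the contact set. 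This is standard differential geometry given a $C^{\theta}$, $\theta>2$, boundary.

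Next, set $w\coloneqq u-\psi\ge 0$. Inside the open set $\Omega\coloneqq\{u>\psi\}$ we have $(-\Delta)^s u=0$, hence $(-\Delta)^s w=-(-\Delta)^s\psi=:g$ in $\Omega\cap B_r(x_*)$, with $w=0$ in the complement; since $\psi\in C_{loc}^{\theta+s+\alpha}$ and $(-\Delta)^s$ is order $2s$, $g\in C_{loc}^{\theta-s+\alpha}\subset C^{\theta-s}$. Differentiating the equation: for a tangential vector field $\tau$ extended smoothly near $x_\circ$, $\p_\tau w$ solves a nonlocal equation of the form $(-\Delta)^s(\p_\tau w)=\p_\tau g + [\,(-\Delta)^s,\p_\tau\,]w$ in $\Omega\cap B_{r'}(x_*)$ with zero exterior data, the commutator being lower order and controlled by $\|w\|_{C^{1,s}}$ and the geometry; applying \cref{lem:ArO2} (with $\beta=\theta$, legitimate since $\theta,\theta\pm s$ noninteger) in the $C^{\theta+1}\supset C^\theta$... wait—here is the subtle point—one must use that the domain $\Omega$ near $x_*$ has $C^\theta$ boundary with $\theta>\beta$ for some admissible $\beta>s$; concretely take $\beta$ slightly above $s$ so that $\Omega$ is $C^{\beta+1}$ (guaranteed because $\theta>2>\beta+1$ is false in general, so instead take $\beta\in(s,\theta-1)$, which is nonempty as $\theta>2>1+s$), yielding $\|(u-\psi)/d^s\|_{C^\beta(\Omega\cap B_{r''})}\le C(\|g\|_{C^{\beta-s}}+\|u\|_{L^\infty_{\d+2s}})$, finite by the moment hypothesis $\|u\|_{L^1_{\d+1+2s}}<\infty$ (\cref{rem:ArO2}). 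From $w/d^s\in C^\beta$ and $w\ge0$ vanishing on $\p\Omega$, the asymptotics $w(x)=c_{x_\circ}d^{1+s}(x)+O(d^{1+s+\beta'})$ at regular points (this is exactly \eqref{eq:regptasy}, now with quantitative constants uniform over $x_\circ\in B_r(x_*)$ by \cref{lem:ArO1}) give the normal derivative bound $|\p_{\nu_\circ}w(x)|=|\p_{\nu_\circ}(c_{x_\circ}d^{1+s})|+O(d^{s+\beta'})\le Cd^s(x)$, which is \eqref{eq:OPntn}; the extra factor $d$ in \eqref{eq:OPntt} comes from differentiating $c_{x_\circ}d^{1+s}$ in a \emph{tangential} direction at $x_\circ$—the leading term depends on $x_\circ$ only through the $C^{\theta-1}$-smooth map $x\mapsto x_\circ$ and through $d$, and $\p_{\tau_\circ}d=0$ at $x_\circ$ since $d$ is minimized there along the boundary, so the tangential derivative of $d^{1+s}$ vanishes to one higher order, leaving $|\p_{\tau_\circ}w(x)|\le Cd^{s+1}(x)$.

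The main obstacle I anticipate is making the expansion $w=c_{x_\circ}d^{1+s}+\text{(higher order)}$ \emph{uniform} in the base point $x_\circ$ ranging over the portion of the free boundary in $B_r(x_*)$, together with tracking how the remainder and the coefficient $c_{x_\circ}$ depend regularly on $x_\circ$, so that the tangential differentiation is justified and produces the claimed gain of one power of $d$. This requires the $C^\theta$ (not merely $C^{1,\alpha}$) regularity of the free boundary from \cref{lem:ArO1}, and a careful choice of the extended normal/tangent frames near $x_\circ$; the commutator term in the differentiated equation and the cutoff bookkeeping of \cref{rem:ArO2} are routine by comparison. I would organize the write-up as: (1) geometry of the tubular neighborhood; (2) the equation for $w$ and its tangential derivatives; (3) application of \cref{lem:ArO2} and \cref{lem:ArO1} to get the uniform weighted $C^\beta$ expansion; (4) reading off \eqref{eq:OPntn}–\eqref{eq:OPntt} from the expansion, with the tangential gain coming from $\p_{\tau_\circ}d(x_\circ)=0$.
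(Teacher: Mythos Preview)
Your proposal has the right ingredients but is more circuitous than necessary, and the ``main obstacle'' you flag is a genuine difficulty for the route you chose---one that the paper's argument simply sidesteps.

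The key difference is \emph{what} one applies the boundary Schauder estimate (\cref{lem:ArO2}) to. You apply it to $w=u-\psi$ itself, with some $\beta\in(s,\theta-1)$, and then try to read off the derivative bounds by differentiating the pointwise expansion $w(x)=c_{x_\circ}d^{1+s}(x)+O(d^{1+s+\beta'})$. As you correctly anticipate, this forces you to justify differentiating the $O(\cdot)$ remainder and to track how $c_{x_\circ}$ varies with the basepoint---real work. The paper instead applies \cref{lem:ArO2} directly to $w=\partial(u-\psi)$ for each \emph{fixed Cartesian} partial derivative $\partial$, with the choice $\beta=\theta-1$. Two things make this clean: first, $(-\Delta)^s$ commutes with constant-coefficient $\partial$, so there is no commutator term at all (your variable tangential field $\tau$ introduces one needlessly); second, since the free boundary is $C^\theta$ by \cref{lem:ArO1}, the domain is exactly $C^{\beta+1}$ when $\beta=\theta-1$, so your hesitation about the domain regularity disappears. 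Also, $\partial(u-\psi)$ vanishes on the contact set because $u-\psi$ is $C^1$ and attains its minimum there, so the zero exterior condition is satisfied.

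From $\frac{\partial(u-\psi)}{d^s}\in C^{\theta-1}$ (hence bounded), the normal bound \eqref{eq:OPntn} is immediate. For the tangential bound, the paper uses that $\theta-1>1$ so the ratio is $C^1$; since $\partial_{\tau_\circ}d=0$, one has $\partial_{\tau_\circ}\bigl(\tfrac{\partial(u-\psi)}{d^s}\bigr)=\tfrac{\partial_{\tau_\circ}\partial(u-\psi)}{d^s}$, which is bounded, giving $|\partial_{\tau_\circ}\partial(u-\psi)|\le Cd^s$ for every Cartesian $\partial$. Then a single application of the mean value theorem from $x_\circ$ to $x$ (using $\partial_{\tau_\circ}(u-\psi)(x_\circ)=0$ and $|x-x_\circ|=d(x)$) yields \eqref{eq:OPntt}. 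No asymptotic expansion, no uniformity-in-$x_\circ$ argument, no differentiation of remainders. I would reorganize your write-up around this choice of $w$ and $\beta$.
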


\begin{proof}

Let $x_*$ be a regular point. If $\psi \in C^{\theta+s}$, for $\theta>2$ such that $\theta,\theta\pm s$ are not integers, then by \cref{lem:ArO1}, the free boundary is $C^\theta$ in a neighborhood of $B_r(x_*)$.

Setting $w\coloneqq \p(u-\psi)$ for any partial derivative $\p$, the fact that $u,\psi$ are $C^1$ implies that $w$ vanishes on $\{u=\psi\}$, hence is a solution to
\begin{align}
\begin{cases}
(-\Delta)^s w= {-}(- \Delta)^s\p\psi, & \quad \text{in} \ \{u>\psi\} \\
w =0, & \quad \text{in}  \ \{u=\psi\}.
\end{cases}
\end{align}
Let $\be \coloneqq \theta-1$. Then $\be>s$ and $\be,\be\pm s$ are not integers.  Moreover, $\Omega \coloneqq \{u>\psi\} \cap B_{r}(x_*)$ is a bounded $C^{\be + 1}$ domain by the previous paragraph. As in \cref{rem:ArO2}, let $\chi$ be a cutoff as above, and consider $\tl{w} \coloneqq \chi w$, which solves
\begin{align}
\begin{cases}
(-\Delta)^s\tl{w}= -(- \Delta)^s(\chi^c {\p u}) {-} (- \Delta)^s(\chi\p\psi), & \quad \text{in} \ \{u>\psi\} \cap B_{r/2}(x_*) \\
\tl{w} =0, & \quad \text{in}  \  B_{r/2}(x_*) \cap \{u=\psi\}.
\end{cases}
\end{align}
Hence, we may apply \cref{lem:ArO2} and \cref{rem:ArO2} to obtain
\begin{align}\label{eq:wdsCth}
&\|\frac{w}{d^s}\|_{C^{\theta-1}(\{u>\psi\}\cap B_{r/2}(x_*))} \nn\\
&\le C\Big( \|(-\Delta)^s(\chi\p\psi)\|_{C^{\theta-1-s}(\ol{\{u>\psi\}\cap B_{r}(x_*)})} +  \|(-\Delta)^s(\chi^c\p { u})\|_{C^{\be-1-s}(\ol{\Omega\cap B_{5r/8}(x_*)})} \nn\\
&\ph+ \|\p(u-\psi)\|_{L^\infty(B_{r}(x_*))}\Big).
\end{align}
Remark that the constant $C$ is independent of the choice of $\p$. From the definition of the fractional Laplacian, it is straightforward to check that
\begin{align}
 \|(-\Delta)^s(\chi\p\psi)\|_{C^{\theta-1-s}(\ol{\{u>\psi\}\cap B_{r}(x_*)})} \le C \|\psi\|_{C^{\theta+s+\alpha}(B_{r}(x_*))}
\end{align}
for any $\alpha>0$. By triangle inequality,
\begin{align}
\|\p(u-\psi)\|_{L^\infty(B_{2r}(x_*))} \le \|\p u\|_{L^\infty(B_{2r}(x_*))} + \|\p\psi\|_{L^\infty(B_{2r}(x_*))},
\end{align}
and the term $\|\nab u\|_{L^\infty(B_{2r}(x_*))}$ is finite by \cref{prop:OPreg}. Finally, for any test function $f$, using \eqref{eq:Deltaschic} with $u$ replaced by $\p f$, we can integrate by parts, using that $|x-y|^{\d+2s}$ is smooth for $x  \in \ol{B_{5r/8}}$ and $y\notin B_{3r/4}(x_*)$, to obtain
\begin{align}
(-\Delta)^s(\chi^c \p f)(x) = c\int_{\R^\d}\frac{\p\chi^c(y)f(y)}{|x-y|^{\d+2s}} -(\d+2s) c\int_{\R^\d} \frac{\p|x-y| \chi^c(y)f(y)}{|x-y|^{\d+2s+1}}.
\end{align}
Since $\p\chi^c(y)$ is zero for $y\in B_{3r/4}(x_*)$ and $y\notin B_{5r/8}(x_*)$, the first term on the right-hand side is smooth if $f$ is in $L_1(B_{5r/8}(x_*))$. The second term is smooth if $\int_{\R^{\d+\k}}\frac{|f(y)|}{(1+|y|)^{\d+2s+1}}<\infty$. Applying this result to {$f=u$}, we conclude that
\begin{align}
\|(-\Delta)^s(\chi^c\p {u})\|_{C^{\be-1-s}(\ol{\Omega\cap B_{5/8r}(x_*)})}  <\infty.
\end{align}
Recalling our starting point \eqref{eq:wdsCth}, we have shown that there is a $C>0$ independent of $\p$, such that
\begin{align}\label{eq:wdsCth'}
\forall x\in B_{r/2}(x_*), \qquad \|\frac{w}{d^s}\|_{C^{\theta-1}(\{u>\psi\}\cap B_{r/2}(x_*))} \le C,
\end{align}
{Recalling that $w=\p(u-\psi)$ for an arbitrary partial derivative $\p$,} the preceding automatically implies that given any free boundary point $x_\circ$ with normal vector $\nu_{\circ}$,  $\p_{\nu_\circ}(u-\psi)$ satisfies
\begin{align}
\forall x\in B_{r/2}(x_*), \qquad |\p_{\nu_{\circ}}(u-\psi)(x)|\le C d^s(x).
\end{align}
This establishes \eqref{eq:OPntn}.

For the tangential derivatives, we first recall that there exists $r'\in (0,r/2)$ such that every point $x\in B_{r'}(x_*)$ has a unique nearest-point projection $x_\circ$ onto $\{u=\psi\}$ in $B_{2r'}(x_*)$. Let $\tau_\circ$ be a tangent vector at $x_\circ$. Then since $\p_{\tau_\circ}d (x) = 0$, it follows that
\begin{align}
\forall x\in B_{r'}(x_*), \qquad \p_{\tau_\circ}\Big(\frac{w}{d^s}\Big)(x) = \frac{\p_{\tau_\circ}\p(u-\psi)(x)}{d^s(x)}.
\end{align}
Here, we are implicitly using that $\theta-1>1$ (by assumption that $\theta>2$) and so $\frac{w}{d^s}$ is at least $C^1$. Moreover, by \eqref{eq:wdsCth'},
\begin{align}
\forall x\in B_{r'}(x_*), \qquad  |{\p_{\tau_\circ}\p(u-\psi)(x)}|\le C d^s(x),
\end{align}
where again $C$ is independent of the choice $\p$. Now since $\p_{\tau_\circ}(u-\psi)(x_\circ)=0$, it follows from the mean value theorem that for any $x\in B_{r'}(x_*)$, 
\begin{align}
|\p_{\tau_\circ}(u-\psi)(x)| = |\p_{\tau_\circ}(u-\psi)(x) - \p_{\tau_\circ}(u-\psi)(x_\circ)| \le Cd^{s+1}(x).
\end{align}
This establishes \eqref{eq:OPntt}, and the proof of the proposition is complete. 
\end{proof}

\begin{remark}
One can easily extract a quantitative bound in terms norms of $u,\psi$ for the constant $C$ in \cref{prop:OPnt} by using \cref{rem:OPreg}.
\end{remark}


Using \cref{prop:OPnt}, we have the following refinement of the asymptotics \eqref{eq:regptasy}.

\begin{prop}\label{prop:OPreglb}
Impose the same assumptions as \cref{prop:OPnt}, and let $x_*,r$ be as above. Then there exists $c>0$ such that
\begin{align}
\forall x\in \{u>\psi\} \cap B_{r}(x_*), \qquad (u-\psi)(x) \ge cd^{1+s}(x).
\end{align}
\end{prop}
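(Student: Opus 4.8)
The plan is to combine the regularity dichotomy from \cref{prop:OPreg}, which at a regular point $x_*$ gives the expansion $(u-\psi)(x) = c_{x_*}d^{1+s}(x) + O(|x-x_*|^{1+s+\al})$ with $c_{x_*}>0$, with the derivative estimates of \cref{prop:OPnt}, in order to propagate the lower bound $(u-\psi)\gtrsim d^{1+s}$ from an infinitesimal neighborhood of $x_*$ to a full ball $B_r(x_*)$ (possibly shrinking $r$). The point is that the expansion \eqref{eq:regptasy} is only a pointwise statement at $x_*$ itself; to get a uniform-in-$x$ bound on a ball one needs to control how $(u-\psi)/d^{1+s}$ varies, and this is exactly where \cref{prop:OPnt} enters.

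First I would set up coordinates near $x_*$ using the $C^\theta$ regularity of the free boundary $\Gamma = \p\{u>\psi\}$ (from \cref{lem:ArO1}, since $\theta>2$): shrink $r$ so that for each $x\in \{u>\psi\}\cap B_r(x_*)$ there is a unique nearest-point projection $x_\circ\in\Gamma\cap B_{2r}(x_*)$, and write the segment $\gamma(t) = x_\circ + t\,d(x)\,\nu_\circ$, $t\in[0,1]$, joining $x_\circ$ to $x=\gamma(1)$, where $\nu_\circ$ is the inward unit normal at $x_\circ$; along this segment $d(\gamma(t)) = t\,d(x)$. Write $F(x) \coloneqq (u-\psi)(x)/d^{1+s}(x)$ for $x\in\{u>\psi\}$. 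The idea is to show $F$ extends continuously to $\Gamma$ with $F(x_\circ) = c_{x_\circ}$, the constant in the regular-point expansion at $x_\circ$, and then to show $\inf_{\Gamma\cap \ol{B_r(x_*)}} c_{x_\circ} >0$ by an open/closed (compactness) argument: the set of regular points is relatively open in $\Gamma$ and we are assuming (hypothesis \ref{assSigrp}, and the hypotheses of \cref{prop:OPnt}) that every point of $\Gamma$ near $x_*$ is regular, so $\Gamma\cap\ol{B_r(x_*)}$ is a compact set of regular points. The strict positivity and lower semicontinuity (indeed continuity) of $x_\circ\mapsto c_{x_\circ}$ then yields a uniform constant $c>0$.

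To make $F(x_\circ) = c_{x_\circ}$ precise and to control $F$ away from $\Gamma$, I would integrate the normal derivative: $(u-\psi)(x) = \int_0^{d(x)} \p_{\nu_\circ}(u-\psi)(\gamma_\circ(\rho))\,d\rho$ where $\gamma_\circ(\rho) = x_\circ + \rho\,\nu_\circ$, using $(u-\psi)(x_\circ)=0$. By \cref{prop:OPnt}, $|\p_{\nu_\circ}(u-\psi)| \le C d^s$; what I additionally need is a matching \emph{lower} bound on $\p_{\nu_\circ}(u-\psi)$ near $\Gamma$, i.e. that $\p_{\nu_\circ}(u-\psi)(\gamma_\circ(\rho))\ge c'\rho^s$ for small $\rho$. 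This follows by differentiating the expansion \eqref{eq:regptasy} in the normal direction — at a regular point the blow-up is the model solution $c_{x_*}(x_\circ\text{-half-space distance})^{1+s}$ whose normal derivative is $(1+s)c_{x_*}\rho^s$ — combined with the quantitative $C^{\theta-1}$ bound on $(u-\psi)/d^s$ from \eqref{eq:wdsCth'}, which controls the error term uniformly. Integrating $\p_{\nu_\circ}(u-\psi)(\gamma_\circ(\rho)) \ge (1+s)c_{x_\circ}\rho^s/2$ over $\rho\in[0,d(x)]$ for $d(x)$ small gives $(u-\psi)(x)\ge \tfrac12 c_{x_\circ}d^{1+s}(x)$; the tangential estimate \eqref{eq:OPntt} guarantees that moving $x_\circ$ along $\Gamma$ changes $(u-\psi)/d^{1+s}$ in a controlled way, so the smallness threshold on $d(x)$ can be taken uniform over $\Gamma\cap\ol{B_r(x_*)}$.

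The main obstacle I anticipate is the lower bound on the normal derivative: \cref{prop:OPnt} only supplies the \emph{upper} bounds \eqref{eq:OPntn}--\eqref{eq:OPntt}, so the strict positivity must come from the regular-point classification, and one has to argue that the constant $c_{x_\circ}>0$ in \eqref{eq:regptasy} is uniformly bounded below on the compact arc of regular free boundary points — this uses continuity of $x_\circ\mapsto c_{x_\circ}$ (which follows from the uniform $C^{\theta-1}$ bound \eqref{eq:wdsCth'} on $(u-\psi)/d^s$ together with $C^\theta$ regularity of $\Gamma$, since $c_{x_\circ} = \lim_{x\to x_\circ} (u-\psi)(x)/d^{1+s}(x)$ is then a continuous function of $x_\circ$). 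Once uniform lower-semicontinuity/positivity of $c_{x_\circ}$ is in hand, the rest is the elementary integration argument sketched above. Everything else — the existence of nearest-point projections, the $C^\theta$ structure of $\Gamma$, the finiteness of all relevant norms — is already supplied by \cref{prop:OPreg,lem:ArO1,prop:OPnt} and the standing assumptions \ref{assSigeq}--\ref{assVLapl}.
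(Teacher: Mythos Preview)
Your approach is correct and follows the same core strategy as the paper: obtain a pointwise lower bound $\p_\nu(u-\psi)\ge c\,d^s$ on the (inward) normal derivative and then integrate along a segment from the free boundary to $x$. The difference is in how this normal-derivative lower bound is obtained. The paper does not derive it from \cref{prop:OPreg} and \cref{prop:OPnt}; instead it fixes a single direction $e_\d$ (the inward normal at $x_*$), cites directly from the proof of \cite[Theorem~1.2]{ArO2020} that $\p_{e_\d}(u-\psi)\ge c\,d^s$ throughout $\{u>\psi\}\cap B_r(0)$, and then integrates along the vertical segment from $(x_\perp,\Theta(x_\perp))$ to $x$, using only $(x^\d-\Theta(x_\perp))\ge d(x)$. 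Your route---reconstructing the lower bound from the regular-point expansion \eqref{eq:regptasy}, the $C^{\theta-1}$ control in \eqref{eq:wdsCth'}, and compactness of $\Gamma\cap\ol{B_r(x_*)}$ to get $\inf_{x_\circ}c_{x_\circ}>0$---is more self-contained and makes transparent that what really drives the bound is the continuity of $x_\circ\mapsto c_{x_\circ}$ together with strict positivity at every regular point; the paper's version is shorter because the hard work is outsourced to \cite{ArO2020}.

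One small slip: when you invoke \eqref{eq:wdsCth'} you describe it as a bound on $(u-\psi)/d^s$, but it is in fact a bound on $w/d^s=\p(u-\psi)/d^s$ (for an arbitrary first-order directional derivative $\p$). This is actually exactly what your argument needs---it gives continuity of $\p_{\nu_\circ}(u-\psi)/d^s$ up to $\Gamma$ with boundary value $(1+s)c_{x_\circ}$---so the slip is only in the labeling, not in the logic.
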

\begin{proof}
Indeed, let $x_*,r,r'$ be as in the statement of \cref{prop:OPnt}. By translation and rotation, we may assume without loss of generality that $x_* = 0$ and the free boundary in $B_r(0)$ is given by the $C^\theta$ graph
\begin{align}
\{u\ge \psi\} \cap B_{r}(0) = \{x = (x^1,\ldots,x^\d) : x^\d \ge \Theta(x^1,\ldots,x^{\d-1}) , \ x\in B_r(0)\}
\end{align}
with $\Theta(0) =0$, $\nab\Theta(0)=0$, and inward normal vector $e_\d \coloneqq (0,\ldots,0,1)$ at $0$. Let us abbreviate $x_\perp \coloneqq (x^1,\ldots,x^{\d-1})$. As noted in \cite{ArO2020} (see the proof of Theorem 1.2), there exists $c>0$ such that
\begin{align}\label{eq:pdupsi}
\p_\d(u-\psi)(x) \ge c d^s(x), \qquad \forall x\in \{u>\psi\} \cap B_r(0).
\end{align}
Thus, for $x=(x_\perp, x^\d) \in \{u>\psi\} \cap B_r(0)$, the fundamental theorem of calculus implies that
\begin{align}
(u-\psi)(x) &= (u-\psi)(x)- (u-\psi)(x_\perp, \Theta(x_\perp)) \nn\\
& = (x^\d - \Theta(x_\perp))\int_0^1 \p_{\d}(u-\psi)(x_\perp, \Theta(x_\perp) + t(x^{\d}-\Theta(x_\perp))) dt \nn\\
&\ge cd^{s+1}(x),
\end{align}
where the final line follows from \eqref{eq:pdupsi} and the fact that $(x^\d - \Theta(x_\perp)) \ge d(x)$ by definition of the distance function. This completes the proof.
\end{proof}


We now pay our debt to the reader by giving the proof of \cref{lem:ugradVext}.

\begin{proof}[Proof of \cref{lem:ugradVext}]
We only consider the fractional case $\s\ne \d-2$, as the Coulomb case $\s=\d-2$ was previously treated in \cite[Lemma 3.2]{BCGM2015}.

We recall that $u=h^{\mu_V}$ is a solution of the obstacle problem \eqref{eq:OPfl} for exponent ${s}=\frac{\d-\s}{2}$ and  obstacle $\varphi=c-V$. By assumption, every free boundary point $x_*\in \p\Sigma = \p\{\zeta =0\}$ is a regular point. Hence, letting $0<r_{x_*}' < r_{x_*}$ be as in \Cref{prop:OPnt,prop:OPreglb}, the family $\{B_{r'_{x_*}}(x_*)\}_{x_*\in \p\Sigma}$ forms an open cover of $\p\Sigma$. Since $\p\Sigma$ is compact, there exists a finite subcover corresponding to points $x_{*,1},\ldots,x_{*,n}$.

By \cref{prop:OPreg} and our assumption for $\varphi$, $\zeta\in C_{loc}^{1,\frac{\d-\s}{2}}$. Since $\nab\zeta = 0$ in $\Sigma$, the left-hand side of \eqref{eq:ugradzeta} trivially holds for $x\in\Sigma$. Letting $d$ denote the distance to the contact set $\{\zeta =0\}$, it follows that there exists $\delta>0$ such that
\begin{align}
\inf\{ d(x) : x \in \{\zeta > 0\} \setminus \bigcup_{i=1}^n B_{r'_{x_*,1}}(x_{*,1})\} \ge \delta.
\end{align}
Since $\zeta>0$ in $\R^\d\setminus\Sigma$ and continuous, it follows from the extreme value theorem that there exists $c_0>0$ such that
\begin{align}\label{eq:ugradex1'}
\min_{x : \delta \le d(x) \le 2\diam\Sigma} \zeta(x) \ge c_0.
\end{align}
Since the left-hand side of \eqref{eq:ugradzeta} is zero for $x$ such that $d(x)\ge 2\diam\Sigma$, the inequality \eqref{eq:ugradzeta} trivially holds for such $x$. For {$\delta\le d(x)\le 2\diam \Sigma$}, we crudely bound
\begin{align}\label{eq:ugradex2'}
|v(x)\cdot \nab\zeta(x)| &\le \|v\|_{L^\infty}\|\nab\zeta\|_{L^\infty(\tl\Sigma)}  \le \|v\|_{L^\infty}\Big(\|\nab V\|_{L^\infty(\tl\Sigma)} + \|\nab h^{\mu_V}\|_{L^\infty(\tl\Sigma)}\Big).
\end{align}
where {$\tl\Sigma \coloneqq \{x: d(x)\le 2\diam\Sigma\}$ and the final inequality is by the triangle inequality}. The second term is finite by \cref{prop:OPreg}. So combining \eqref{eq:ugradex1'}, \eqref{eq:ugradex2'}, we find that
\begin{align}\label{eq:ugradex3'}
\delta\le d(x)\le 2\diam\Sigma, \qquad |v(x)\cdot\nab\zeta(x)| \le \frac{\|v\|_{L^\infty}\Big(\|\nab V\|_{L^\infty(\Sigma_\delta)} + \|\nab h^{\mu_V}\|_{L^\infty(\Sigma_\delta)}\Big)}{c_0} \zeta(x).
\end{align}
{Since $\{d(x)\le\delta\} \subset \bigcup_{i=1}^n B_{r'_{x_{*,i}}}(x_{*,i})$,} it suffices now to establish \eqref{eq:ugradzeta} when $x\in \bigcup_{i=1}^n B_{r'_{x_{*,i}}}(x_{*,i})$.



Dropping the $i$ subscript, let $x\in B_{r'}(x_*) \cap \{\zeta>0\}$. Let $\nu_\circ$ denote the normal vector at the nearest-point projection $x_\circ$ onto $\{\zeta=0\}$. Without loss of generality (rotating and translating if necessary), we may assume that $\nu_\circ = e_{\d} = (0,\ldots,0,1)$. Hence, $e_{1},\ldots,e_{\d-1}$ are tangent to $\{\zeta=0\}$ at $x_\circ$. We write
\begin{align}\label{eq:ugradex1}
v(x)\cdot\nab\zeta(x) = \sum_{j=1}^\d v^j(x)\p_j\zeta(x).
\end{align}
Using the no-flux condition $v^\d(x_\circ) = 0$, it follows from the mean-value theorem and the estimate \eqref{eq:OPntn} of \cref{prop:OPnt} {applied with $s= \frac{\d-\s}{2}$} that
\begin{align}\label{eq:ugradex2}
|v^\d(x)\p_\d\zeta(x)| \le |v^\d(x)-v^\d(x_\circ)| |\p_\d\zeta(x)| \le C\|\nab v^\d\|_{L^\infty(B_{r'}(x_*))}d^{1+\frac{\d-\s}{2}}(x),
\end{align}
where we have implicitly used that $|x-x_\circ| = d(x)$. Using the estimate \eqref{eq:OPntt} of \cref{prop:OPnt}, we also have
\begin{align}\label{eq:ugradex3}
\sum_{j=1}^{\d-1} |v^j(x)\p_j\zeta(x)| \le C\sum_{j=1}^{\d-1} \|v^j\|_{L^\infty(B_{r'}(x_*))} d^{1+\frac{\d-\s}{2}}(x) \le C\|v\|_{L^\infty(B_{r'}(x_*))} d^{1+\frac{\d-\s}{2}}(x).
\end{align}
Combining \eqref{eq:ugradex1}, \eqref{eq:ugradex2}, \eqref{eq:ugradex3}, we conclude that there is a $C>0$, independent of $v$, such that
\begin{align}\label{eq:ugradex4}
\forall x\in B_{r'}(x_*) \cap \{\zeta >0\}, \qquad |v(x)\cdot\nab\zeta(x)| \le C\|v\|_{W^{1,\infty}(B_{r'}(x_*))} d^{1+\frac{\d-\s}{2}}(x).
\end{align}
On the other hand, by \cref{prop:OPreglb}, there exists $c>0$ such that
\begin{align}\label{eq:ugradex5}
\forall x\in B_{r'}(x_*) \cap \{\zeta >0\}, \qquad \zeta(x) \ge c d^{1+\frac{\d-\s}{2}}(x).
\end{align}
Combining \eqref{eq:ugradex4}, \eqref{eq:ugradex5} {and setting $A\coloneqq C/c$}, we find that
\begin{align}
\forall x\in B_{r'}(x_*) \cap \{\zeta >0\}, \qquad |v(x)\cdot\nab\zeta(x)| \le A\|v\|_{W^{1,\infty}(B_{r'}(x_*))}\zeta(x).
\end{align}

Applying \eqref{eq:ugradex4} for each ball $B_{r_{x_{*,i}}'}(x_{*,i})$, we obtain finitely many constants $A_1,\ldots,A_n>0$, so that setting $A_{\max} = \max(A_1,\ldots,A_n)$, we conclude
\begin{align}\label{eq:ugradex6}
\forall x\in \bigcup_{i=1}^n B_{r_{x_{*,i}}'}(x_{*,i}) \cap \{\zeta>0\}, \qquad |v(x)\cdot\nab\zeta(x)| \le A_{\max}\|v\|_{W^{1,\infty}} \zeta(x).
\end{align}

Combining \eqref{eq:ugradex6}, \eqref{eq:ugradex3'}, the proof of the lemma is complete.
\end{proof}

\bibliographystyle{alpha}\bibliography{../../MASTER}
\end{document}